\documentclass{article}
\usepackage{amsthm}
\usepackage{graphicx} 
\usepackage{listings}
\usepackage[colorlinks=true, urlcolor=blue, linkcolor=blue, citecolor=blue]{hyperref}
\usepackage{geometry}
\usepackage[utf8]{inputenc}

\usepackage{makeidx}
\usepackage{latexsym}
\usepackage{amsfonts}
\usepackage{amssymb}
\usepackage{amsmath}
\usepackage{amstext}
\usepackage{amsthm}
\usepackage{color}
\usepackage{mathrsfs}
\usepackage{relsize}
\usepackage{subfig}
\usepackage{mathtools}
\usepackage{graphicx}
\usepackage{sidecap}
\usepackage{wrapfig}
\usepackage[dvipsnames]{xcolor}
\usepackage{pstricks}

\graphicspath{ {./images/} }
\usepackage[english]{babel}
\usepackage[T1]{fontenc}
\usepackage{cancel}
\usepackage{amsmath}
\usepackage{subfig}
\usepackage{braket}
\usepackage{mathrsfs}
\usepackage{scalerel,stackengine}
\stackMath
\usepackage{comment}
\newcommand\reallywidehat[1]{%
\savestack{\tmpbox}{\stretchto{%
  \scaleto{%
    \scalerel*[\widthof{\ensuremath{#1}}]{\kern-.6pt\bigwedge\kern-.6pt}%
    {\rule[-\textheight/2]{1ex}{\textheight}}
  }{\textheight}%
}{0.5ex}}%
\stackon[1pt]{#1}{\tmpbox}%
}
\usepackage{tikz}
\usetikzlibrary{arrows.meta,decorations.markings}
\usetikzlibrary{positioning}
\makeatletter
\providecommand*{\cupdot}{%
  \mathbin{%
    \mathpalette\@cupdot{}%
  }%
}
\newcommand*{\@cupdot}[2]{%
  \ooalign{%
    $\m@th#1\cup$\cr
    \hidewidth$\m@th#1\cdot$\hidewidth
  }%
}
\makeatother

\usepackage{amsthm}
\newtheorem{theorem}{Theorem}
\newtheorem{defn}[theorem]{Definition}

\newtheorem{prop}[theorem]{Proposition}
\newtheorem{cor}[theorem]{Corollary}
\newtheorem{remark}[theorem]{Remark}
\newtheorem{oss}[theorem]{Remark}
\newtheorem{example}[theorem]{Example}
\newtheorem{lemma}[theorem]{Lemma}

\newtheorem{corollary}[theorem]{Corollary} 

\usepackage{blindtext}
\usepackage{amssymb}
\usepackage{enumitem}
\usepackage{hyperref}
\hypersetup{
colorlinks=true,
linkcolor=blue,
filecolor=magenta,
urlcolor=cyan,
}

\newcommand{\J}[1]{\textcolor{red}{#1}}

\newcommand{\D}[1]{\textcolor{blue}{#1}}

\newcommand{\coloneq}{\mathrel{\mathop:}=}

\title{The neighbourhood convexity} 
\author{Daniela Bubboloni~\thanks{Corresponding author.}~\footnote{Dipartimento di Matematica e Informatica ``Ulisse Dini'', Universit\`{a} degli Studi di Firenze, viale Morgagni, 67/a, 50134 Firenze, (Italy), {\tt daniela.bubboloni@unifi.it}, \href{https://orcid.org/0000-0002-1639-9525}{orcid.org/0000-0002-1639-9525}.} \and José Cáceres~\footnote{CDTIME and Departamento de Matem\'{a}ticas, Universidad de Almería, ctra. Sacramento s/n, 04120 Almería (Spain), {\tt jcaceres@ual.es}, \href{https://orcid.org/0000-0003-2790-8484}{orcid.org/0000-0003-2790-8484}. }}
\date{}

\begin{document}

\maketitle
\begin{abstract}
In this paper, we investigate the neighbourhood convexity ($n$-convexity) on graphs, a new finite convexity space grounded in the common closed neighbourhood closure operator. Unlike standard path-based graph convexities, $n$-convexity shows a non-canonical behaviour, giving rise to compelling structural properties and being almost never hereditary. Focusing on the properties of graphs that form $n$-convex geometries, a parity distinction emerges: an $n$-convex geometry contains a star vertex if and only if the number of its vertices is odd. Every odd-order $n$-convex geometry can be uniquely constructed by attaching a star vertex to an even-order one. We introduce the concept of quasi-stars (vertices of degree $\vert{}V\vert{}-2$) and prove a reduction property that allows systematically reducing an $n$-convex geometry by removing a pair of vertices, one of which is a quasi-star.  Finally, we explore the connections between $n$-convexity and $P(G)$, the neighbourhood preorder, demonstrating that $n$-convex sets are upsets of $P(G)$ and that, in star-free $n$-convex geometries, quasi-stars correspond precisely to the maximal elements of $P(G)$. We complete our study by classifying quasi-threshold and threshold $n$-convex geometries.
\end{abstract}
{\bf Keywords:} Neighbourhood convexity, Convex geometry, Neighbourhood preorder.\\
{\bf MSC classification:} 05C75, 06A06, 52A01.
\section{Introduction }
\label{sec:introduction}

While initially developed in Analysis and Geometry, the notion of convexity has been fruitful in many areas of mathematics, such as Mathematical Programming~\cite{ly16}, Computational Geometry~\cite{ps85}, Probability~\cite{bp05}, Game Theory~\cite{be00} and Economics~\cite{m16}. In particular, finite convexities have exerted a significant influence on Graph Theory~\cite{d87}, as they allow for the characterization of many classes of 
graphs, such as chordal, ptolemaic graphs~\cite{fj86} and HHD-free graphs~\cite{dnb99,co09} (see~\cite{dgpst25} for a survey). Beyond these characterizations, this framework has driven considerable computational interest in efficiently calculating the elements of different convexities~\cite{dgkps09}. Furthermore, it has established deep connections between convexities and other structural graph parameters, while also inspiring the 
definition of novel graph parameters rooted in convex concepts such as the hull number, the interval number and the interval iteration number for the usual convexities that can be defined in a graph (see~\cite{p13} for a survey of these diverse research lines).

In this paper, we introduce a new convexity in graphs that we call neighbourhood convexity or $n$-convexity. A non-empty set $K$ of vertices of a graph $G$ is $n$-convex if there exists a set $Y$ of vertices such that $K$ is formed by those vertices of $G$ which are adjacent to every vertex of $Y.$  This convexity, as far as we know, has never been addressed, and it is not based, as usual, on paths among vertices of the graph but on a Moore closure operator, introduced by Bubboloni and Pinzauti (\cite{1020}) in order to deal with the reconstruction of the directed power graph of a finite group from its undirected counterpart. The role of this operator in successfully addressing other related group theoretical questions (\cite{bp25a}) invites to explore its properties from a pure graph theoretical point of view. 

A fundamental problem in graph convexity is determining whether a convexity constitutes a convex geometry. That study is largely motivated by the compact representation of convex sets, as every convex set is uniquely determined and efficiently encoded by its minimal generating set of extreme points (see~\cite{vv93}).
In our research, we investigate $n$-convex geometries and focus on its structural properties. Remarkably, the neighbourhood convexity is almost never hereditary, that is, the fact that a graph is an $n$-convex geometry does not guarantee that every induced subgraph of it is still an $n$-convex geometry (Theorem~\ref{nohered}).
Since, as pointed out in \cite{dgpst25}, very few non-hereditary convex geometries are known, our research can be seen as a contribution in this direction.
Specifically, we show that  a convex geometry cannot admit true twins (Theorem~\ref{noclosedtwins}) and thus it admits at most one star vertex. Remarkably,  even and odd-order convex geometries bifurcate with respect to the existence of stars. Indeed we establish that an $n$-convex geometry admits a star if and only if the number of its vertices is odd (Corollary~\ref{evenodd}). As a consequence,
any odd-order convex geometry can be constructed from an even-order one by attaching a star-like vertex (Proposition~\ref{charodd}). On the contrary, even if one construction  to pass from an odd-order $n$-convex geometry to an even-order one is known, it remains an open question how to construct any even-order convex geometry (see Section~\ref{sec:final}).

A key concept for establishing some of the above mentioned results and many other throughout the paper, is the one of quasi-star, that is, a vertex which is adjacent to any vertex up to one.
 Our structural analysis reveals a bijection between the set of extreme points of an $n$-convex geometry and the set $\mathcal{D}$ of quasi-stars (Theorem~\ref{stars-extreme}). $\mathcal{D}$ is also at the base of a main reduction property, showing that it is always possible to reduce a given convex geometry with no star to a smaller one by removing two vertices, one of them belonging to $\mathcal{D}$. 

Recent years have witnessed renewed interest in convex geometries from several perspectives. Contributions include the study of different notions of dimension for convex geometries \cite{KnauerTrotter2024}, geometric realizability in Euclidean spaces, ideals of convex geometries and their connections with complexes of oriented matroids \cite{cck24}, as well as special subclasses such as supersolvable convex geometries, defined as a generalization of the lattice of subgroups in a finite supersolvable group, and convex geometries arising from building sets \cite{BackmanDanner2024}. These developments highlight the richness of the theory and its interactions with order theory, lattice theory, discrete geometry and algebra \cite{Arm, Ma}. In the present work, a connection with order theory  naturally emerges through the neighbourhood preorder $P(G)$ of a graph $G$. Such a preorder, simply prescribes, for $x,y\in V(G)$, that $x\leq y$ if the closed  neighbourhood of $x$ is included in the  closed  neighbourhood of $y$ (see Section \ref{sec:pre}). $P(G)$ turns out to encode significant information on the neighbourhood convexity. 
Indeed,  $n$-convexity refines  the upsets convexity of $P(G)$, upsets with a unique minimal  element are $n$-convex and the extreme vertices of an $n$-convex set are a subset of the minimal elements of $P(G)$ (Lemma~\ref{upset1}).  
For an $n$-convexity, $P(G)$ becomes a poset and its maximal elements are the quasi-stars (Proposition~\ref{Dposet}).  

We emphasize that the neighbourhood structure of a graph has been considered in many papers. For instance, in \cite{bst93}, relations of various kind on the vertex set as well as graphs and directed or mixed graphs are associated with the neighbourhood structure of a graph.
In \cite{Foldes}, it is considered, for a graph $G$, the so-called vicinal preorder in which, for $x,y\in V(G)$, $x\preceq y$ means that the neighbourhood of the vertex $x$ is contained in the closed neighbourhood of the vertex $y.$ Such a preorder is at the base of the definition of the Dilworth number  and has been widely considered in the literature (see, for instance, \cite{Peled},\cite{Li}). 
Since $P(G)$ refines the vicinal preorder one can ask if every $n$-convex set is indeed an upset for the vicinal preorder. However, it is easily seen that this is not the case (see the end of Section \ref{sec:pre}). In other words our framework specifically requires a variation of  the classical concepts in the literature.

The paper is organized as follows: Section 2 provides preliminaries, while Section~\ref{sec:neighbourhood} introduces neighbourhood convexity. Section~\ref{sec:convexgeometry} presents $n$-convex geometries and quasi-star vertices. Section~\ref{sec:preorder} establishes the connection between $n$-convexity and neighbourhood preorders. Section~\ref{construction} describes the star-augmented graph, and Section~\ref{sec:stars} characterizes stars using vertex parity. Section~\ref{sec:applications} provides applications, including threshold $n$-convex geometries. Finally, Section~\ref{sec:final} introduces quasi-star-augmented graphs, followed by a section of conclusions and future lines of work.


\section{Preliminaries}
\label{sec:preliminaries}
We denote by $\mathbb{N}$ the set of positive integers and, for  $k\in \mathbb{N}\cup \{0\}$, we set $[k]\coloneq\{x\in \mathbb{N}: x\leq k\}$, $[k]_0\coloneq\{x\in \mathbb{N}\cup \{0\}: x\leq k\}$. 
For a set $X$, we use the notation  $X^{(2)}\coloneq\{A\subseteq X:|A|=2\};$  given   $Y, Z\subseteq X$, we write $X=Y\cupdot Z$ to mean $X=Y\cup Z$ and $Y\cap Z=\varnothing.$ 

\subsection{Graphs}
All graphs in this paper are finite and undirected. 
Thus a graph $G$ is a pair $(V,E),$ where $V$ is a finite non-empty set and $E\subseteq V^{(2)}.$ 
Let $n,m\in \mathbb{N}.$  The \textbf{path} on $n$ vertices, is the graph $P_n$ with vertex set $[n]$ and edges 
$\{i,i+1\}$ for $i\in [n-1]$; the \textbf{cycle} on $n$ vertices 
 is the graph $C_n$ with vertex set $[n]$ and  edges $\{i,i+1\}$ for $i\in [n-1]$ and $\{n,1\}$. As usual we denote by $K_n$ the {\bf complete graph} on $n$ vertices and by $K_{m,n}$ the {\bf complete bipartite graph} with parts of sizes $m$ and $n$.
For a given family $\mathcal{X}$ of graphs, we say that a graph $G$ is \textbf{$\mathcal{X}$-free} if no induced subgraph of $G$ is isomorphic to a graph in the family $\mathcal{X}$.

Let $G=(V,E)$ be a graph and $x\in V$. We refer to $|V|$ as the {\bf order} of $G.$  The {\bf open neighbourhood} of $x$ is $N(x)=\{y\in G: \{x,y\}\in E\}$ and the {\bf closed neighbourhood} of $x$ is $N[x]=N(x)\cup\{x\}$. The subgraph induced by $X\subseteq V$ is denoted by $G[X]$. If $H$ is a subgraph of $G$, we write $H\subseteq G$; if $H$ is an induced subgraph of $G$, we write $H\leq G.$
A non-empty subset $X$ of $V$ is called a {\bf clique} if $G[X]$ is a complete graph.
Two vertices $x,y\in V$ are called {\bf true twins} if $N[x]=N[y]$; {\bf false twins} if $N(x)=N(y)$. 
Two distinct vertices which are true [false] twins are said to be  a pair of true [false] twins. $G$ is called {\bf true twins free} if $G$ does not admit pairs of true twins. 
The relation of being closed twins 
is an equivalence relation. We denote the equivalence class of $x$, with respect to the true twin relation, by $[x]_N.$
The connected component of $G$ containing the vertex $x\in V$ is denoted by $C(x)$. In any notation, we adopt subscripts denoting the graph under consideration only if misunderstanding  is possible.
For undefined basic concepts we refer the reader to introductory graph theoretical literature, e.g.,~\cite{w99}.

\subsection{Convexities}
We recall the definitions about convexity in the context of finite sets which we are going to use throughout the paper.

 A family $\mathscr{C}$ of subsets of a finite  set $V$ is called a \textbf{convexity} in $V$, or we say that $(V,\mathscr{C})$ is a \textbf{convexity space} if $\varnothing, V\in \mathscr{C}$, and $X,Y\in\mathscr{C}$ implies $X\cap Y\in \mathscr{C}.$
The elements in $\mathscr{C}$ are called {\bf $\mathscr{C}$-convex} or more simply {\bf convex} if the convexity is clear from the context. The convexity $2^V$ is the  {\bf free convexity}; the convexity $\{\varnothing, V\}$ is the {\bf coarse convexity}. Given  $X\subseteq V$, the  \textbf{convex hull} of $X$, denoted by $[X]_{\mathscr{C}}$, is defined by
$$[X]_{\mathscr{C}}\coloneq \bigcap\{C\in \mathscr{C}: X\subseteq C\}.$$
Clearly, $[X]_{\mathscr{C}}$ is the minimum $\mathscr{C}$-convex set containing $X$. In particular, $X=[X]_{\mathscr{C}}$ if and only if $X$ is $\mathscr{C}$-convex.
Given $X\in \mathscr{C}$, an element $x\in X$ is called an {\bf extreme} of $X$ if $X- \{x\}\in \mathscr{C}.$ The set of extreme points of $X$ is denoted by $\mathrm{ex}_{\mathscr{C}}(X)$. Given $v \in V$, a convex set $C \in \mathscr{C}$ 
is said to be a $\mathscr{C}$-{\bf copoint} attached at $v$ if
 $C \subseteq V-\{v\}$  and it is maximal among the convex sets avoiding $v$, that is, for every $D \supsetneq C$ such that $D \in \mathscr{C}$, we have $v \in D$.

A convexity space $(V,\mathscr{C})$ is called a {\bf convex geometry} if, for every $K\in\mathscr{C}$, $K=[\mathrm{ex}_{\mathscr{C}}(K)]_{\mathscr{C}}$. 
Clearly, the free convexity is always a convex geometry.
Convex geometries mimic in the discrete framework some main properties of the classical convexity in the Euclidean space $\mathbb{R}^n$. One main property is given by the anti-exchange property.
 A convexity space $(V,\mathscr{C})$ satisfies the {\bf anti-exchange property} if, for every $K \in \mathscr{C}$ and $v,w\in  V-K$, $v\in [K \cup \{w\}]_{\mathscr{C}}$ and $w\in [K \cup \{v\}]_{\mathscr{C}}$ imply $v=w.$
  
\begin{theorem}{\rm (\cite{Jamison, vv93})} 
        \label{62}
        Let $(V,\mathscr{C})$ be a convexity space. Then the following facts are equivalent:
        \begin{enumerate} [label=$(\arabic*)$, ref=\thetheorem (\arabic*)]
        \item \label{62first} $(V,\mathscr{C})$ is a convex geometry.
            \item \label{62second} $(V,\mathscr{C})$ has the anti-exchange property.
            \item \label{62third} For every $K \in \mathscr{C}-\{V\}$, there exists $v \in V-K$ such that $K 	\cup \{v\} \in \mathscr{C}$.
            \item \label{62forth} It does not exists a $\mathscr{C}$-copoint attached at two different points.
        \end{enumerate}
\end{theorem}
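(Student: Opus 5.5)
We prove the four equivalences by a cycle of implications, using throughout that in a finite convexity the hull operator $X\mapsto [X]_{\mathscr{C}}$ is extensive, monotone and idempotent. We show $(1)\Rightarrow(3)\Rightarrow(2)\Rightarrow(1)$, and separately $(3)\Leftrightarrow(4)$. The link between (3) and (4) is the observation that copoints are exactly the maximal convex sets avoiding a point. Hence a convex set $K\neq V$ is a copoint attached at $v$ if and only if no one-point convex extension of $K$ exists other than via $v$.

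\emph{$(1)\Rightarrow(3)$.} Let $K\in\mathscr{C}$ with $K\neq V$. Choose a convex set $C\supsetneq K$ minimal among convex proper supersets of $K$ (there is one, e.g.\ $V$). By (1), $C=[\mathrm{ex}(C)]$. If $\mathrm{ex}(C)\subseteq K$ then $C\subseteq K$, a contradiction. So there is an extreme $x\in C-K$. Then $C-\{x\}$ is convex and contains $K$, so minimality forces $C-\{x\}=K$, i.e.\ $K\cup\{x\}\in\mathscr{C}$.

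\emph{$(3)\Rightarrow(2)$.} Let $K\in\mathscr{C}$ and let $v\neq w$ in $V-K$ with $v\in[K\cup\{w\}]$. Iterating (3) from $K$ gives a chain $K=K_0\subset K_1\subset\dots\subset K_m=V$ of convex sets, each obtained by adding one point. Let $j$ be the first index with $v\in K_j$ or $w\in K_j$. If $w$ entered first, i.e.\ $w\in K_j$ and $v\notin K_j$, then $[K\cup\{w\}]\subseteq K_j\not\ni v$, a contradiction. Hence $v$ enters first, and then $w\notin K_j\supseteq[K\cup\{v\}]$, so $w\notin [K\cup\{v\}]$. This is anti-exchange.

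\emph{$(2)\Rightarrow(1)$.} Let $K\in\mathscr{C}$ and $H=[\mathrm{ex}(K)]\subseteq K$, and suppose $H\neq K$. Choose $x\in K-H$ with $[H\cup\{x\}]$ maximal by inclusion among the sets $[H\cup\{y\}]$, $y\in K-H$. We claim $[H\cup\{x\}]=K$.
\begin{itemize}
\item[$\circ$] If some $y\in K-[H\cup\{x\}]$ exists, then maximality gives $x\notin [H\cup\{y\}]$.
\item[$\circ$] To finish, we need to show that $x$ is extreme in $K$, i.e.\ that $K-\{x\}$ is convex.
\end{itemize}
This is the main obstacle, and the cleanest route is different. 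Take $x\in K-H$ such that $[H\cup\{x\}]$ is \emph{maximal}. Then for any $y\in K-\{x\}$ with $x\in[(K-\{x\})]$ we argue via anti-exchange applied to the convex set $D=[K-\{x\}]$.

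More directly: suppose $K-\{x\}$ is not convex for all $x\in K-H$. I would instead prove that every element of $K$ lies in the hull of the extremes by induction on $|K|$. Pick a maximal convex $C\subsetneq K$; this is possible since $H$ is convex and $H\subsetneq K$. We show $|K-C|=1$. If $v\neq w$ both lie in $K-C$, maximality gives $[C\cup\{v\}]=K\ni w$ and $[C\cup\{w\}]=K\ni v$, contradicting (2). So $K=C\cup\{x\}$ and $x$ is extreme. By induction $C=[\mathrm{ex}(C)]$. Moreover, any extreme $e$ of $C$ that is not extreme in $K$ satisfies $e\in[K-\{e\}]$; one then shows by anti-exchange that $e\in[\mathrm{ex}(K)]$. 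I expect this bookkeeping to be the delicate point.

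To avoid it, I would choose the route that derives (1) from (3) applied downward instead. By anti-exchange, every copoint attached at some point is a maximal convex subset missing exactly one element of its convex superset. Hence $K-\{x\}\in\mathscr{C}$ for every $x\in K$ outside $H$ that is maximal, which contradicts $x\notin\mathrm{ex}(K)$.

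\emph{$(3)\Leftrightarrow(4)$.} Let $C$ be a copoint attached at $v$. By maximality, every convex $D\supsetneq C$ contains $v$, so $[C\cup\{u\}]\ni v$ for all $u\notin C$.
\begin{itemize}
\item[$\circ$] \emph{$(3)\Rightarrow(4)$.} By (3) some $C\cup\{u\}$ is convex. It contains $v$, so $u=v$ and $[C\cup\{v\}]=C\cup\{v\}$. If $C$ were also attached at $w\neq v$, the same argument would give $u=w$, so $v=w$.
\item[$\circ$] \emph{$(4)\Rightarrow(3)$.} Given $K\neq V$, take $D\supseteq K$ maximal convex with $D\neq V$. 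Every point $p\notin D$ makes $D$ a copoint attached at $p$, so by (4) $|V-D|=1$. Applying this inside the convexity restricted to the hull chain, we pick $D$ minimal among convex proper supersets of $K$ and show $|D-K|=1$ by the same copoint argument applied to $K$ relative to $D$.
\end{itemize}
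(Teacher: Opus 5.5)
The paper gives no proof of this theorem (it is quoted from \cite{Jamison, vv93}), so your argument has to stand on its own. Your steps $(1)\Rightarrow(3)$, $(3)\Rightarrow(2)$ and $(3)\Rightarrow(4)$ are correct. However, nothing you write actually proves $(2)\Rightarrow(1)$ or $(4)\Rightarrow(3)$, so the cycle does not close.

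For $(2)\Rightarrow(1)$, you abandon the first two attempts, and the induction leaves its ``delicate point'' open. The closing claim, that $K-\{x\}\in\mathscr{C}$ for a ``maximal'' $x\in K-H$, is unjustified. For a general convex $H\subsetneq K$ it is false, even in a convex geometry. Take the interval convexity of the chain $1<2<3$, with $K=\{1,2,3\}$ and $H=\varnothing$. The sets $[H\cup\{y\}]=\{y\}$ are pairwise incomparable, so $y=2$ qualifies as maximal, yet $K-\{2\}=\{1,3\}$ is not convex. The missing idea is a small change to your third attempt. With $H=[\mathrm{ex}(K)]\subsetneq K$, choose $C$ maximal among convex sets with $H\subseteq C\subsetneq K$, rather than merely maximal among convex proper subsets of $K$. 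Your anti-exchange argument still gives $K=C\cup\{x\}$. Hence $x\in\mathrm{ex}(K)\subseteq H\subseteq C$, contradicting $x\notin C$. No induction or bookkeeping is needed.

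For $(4)\Rightarrow(3)$, your first step is fine: a maximal proper convex set misses exactly one point. The second step misapplies (4). If $D$ is minimal among convex proper supersets of $K$, then $K$ is attached at every point of $D-K$ only as a copoint of the trace convexity $\{C\in\mathscr{C}:C\subseteq D\}$. But (4) speaks only about copoints of $(V,\mathscr{C})$, and a copoint of the trace need not be a copoint of $V$. You have to pass to a copoint of the whole space.

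The cleanest way is to prove $(4)\Rightarrow(2)$ directly. Suppose $K\in\mathscr{C}$, $v\neq w$ lie in $V-K$, $v\in[K\cup\{w\}]$ and $w\in[K\cup\{v\}]$. Let $C$ be maximal among convex sets containing $K$ and avoiding $v$; it is a copoint attached at $v$. Then $w\notin C$, since otherwise $v\in[K\cup\{w\}]\subseteq C$. Every convex $E\supsetneq C$ contains $v$, hence contains $[K\cup\{v\}]$, hence contains $w$. So $C$ is also attached at $w$, contradicting (4).

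Together with the repaired $(2)\Rightarrow(1)$, the chain $(4)\Rightarrow(2)\Rightarrow(1)\Rightarrow(3)\Rightarrow(4)$ establishes the equivalence.
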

We present now a result of general interest about copoints.
\begin{lemma}
\label{202}
Let $(V,\mathscr{C})$ be a convex geometry,  $X\subseteq V$ and $K \in \mathscr{C}$ be such that $X\not\subseteq K$. Then, there exist $x\in X$ and a $\mathscr{C}$-copoint $D$ attached at $x$ such that $(X-\{x\}) \cup K \subseteq D$.
\end{lemma}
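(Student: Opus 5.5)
We argue by growing $K$ one point at a time, using the characterization \ref{62third} of convex geometries. Since $X\not\subseteq K$, we have $K\neq V$. Consider the family
$$\mathcal{F}\coloneq\{C\in\mathscr{C}: K\subseteq C,\ |X-C|\geq 1\},$$
which is non-empty because $K\in\mathcal{F}$. Since $V$ is finite, we pick $C\in\mathcal{F}$ of maximal cardinality and aim to show that $|X-C|=1$ and that $C$ is a copoint attached at the unique element of $X-C$.

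First we show $|X-C|=1$. Since $C\neq V$, Theorem~\ref{62third} yields $v\in V-C$ with $C\cup\{v\}\in\mathscr{C}$. The set $C\cup\{v\}$ contains $K$ and is strictly larger than $C$, so by maximality it is not in $\mathcal{F}$; hence $X\subseteq C\cup\{v\}$. Because $X\not\subseteq C$, this forces $v\in X$ and $X-C=\{v\}$. We set $x\coloneq v$ and $D\coloneq C$. Then $x\notin D$ and $(X-\{x\})\cup K\subseteq D$.

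It remains to check that $D$ is a copoint attached at $x$. Let $D'\supsetneq D$ with $D'\in\mathscr{C}$. Then $D'\supseteq K$ and $|D'|>|D|$, so maximality gives $D'\notin\mathcal{F}$, i.e.\ $X\subseteq D'$; in particular $x\in D'$. Hence $D$ is maximal among the convex sets avoiding $x$.

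The only delicate point is making sure that the extension provided by \ref{62third} adds an element of $X$. This follows from the maximality of $C$ as shown, so no real obstacle arises. Alternatively, one could take $D$ maximal among convex sets containing $K$ and avoiding a fixed $x_0\in X-K$, and then adjust; the cardinality-maximal choice above avoids this.
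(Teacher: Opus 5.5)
Your proof is correct, and it takes a different, more direct route than the paper's.

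The paper works inside the family of copoints that contain $K$ and are attached at points of $X$, after first arguing that this family is non-empty. It picks one such copoint $D^*$, attached at $x^*$, that maximizes $|D^*\cap X|$, and then argues by contradiction. If some $x\in X$ lay outside $D^*\cup\{x^*\}$, then $D^*\cup\{x^*\}$ would be convex by Theorem~\ref{62third}; this tacitly uses that the one-point extension of a copoint must add its attaching point. Any copoint attached at $x$ that contains $D^*\cup\{x^*\}$ would then meet $X$ in more points, contradicting the choice of $D^*$.

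You instead maximize $|C|$ over all convex $C\supseteq K$ with $X\not\subseteq C$. You apply Theorem~\ref{62third} once, and maximality forces the added point to lie in $X$ and $X-C$ to be a single point $x$. The same maximality then gives the copoint property at $x$ immediately. This avoids the contradiction step and the preliminary existence of copoints containing $K$.

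Both arguments use Theorem~\ref{62third} in the same way: the step where your maximality pins down the extending point is exactly the step the paper uses implicitly to show $D^*\cup\{x^*\}$ is convex. The difference is only in the extremal object, and your choice produces the copoint directly. The paper's version stays closer to the copoint formulation of Theorem~\ref{62forth} but yields no stronger conclusion, since both end with $X-D=\{x\}$.
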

\begin{proof}
 First, we observe that for every $x \in X-K\neq\varnothing$, there exists at least a $\mathscr{C}$-copoint $D$ attached at $x$ which contains $K$. Indeed such $D$ is a maximal element, with respect to inclusion, of the non-empty finite set $\{D' \in \mathscr{C}: K\subseteq D'\subseteq V-\{x\} \}$. So the set $$\mathcal{L}\coloneq\{ D\subseteq V : K\subseteq D \mbox { such that} \ D \ \mbox{is a} \ \mathscr{C}\mbox{-copoint attached at} \ x \mbox{ for some } x\in X\}$$ is non-empty and, consequently, $\ell\coloneq\max\{|D \cap X|:D\in\mathcal{L}\}$ is well defined.
 
 Let $D^*\in\mathcal{L}$ realizing $\ell$. Then, $D^*$ contains $K$ and is a $\mathscr{C}$-copoint attached at a certain $x^*\in X$. Since $D^* \subseteq V-\{x^*\}$, then we have $|D^* \cap X| \leq |X|-1$. Note also that we have $|(D^* \cup \{x^*\}) \cap X|=|D^* \cap X|+1$. Then for every $x\in X$, we have that $D^* \cup \{x^*\}$ is not a $\mathscr{C}$-copoint attached at $x$. Now we show that $X \subseteq D^* \cup \{x^*\}$. Suppose, by absurd, that there exists $x\in X$ such that $x \not \in D^* \cup \{x^*\}$. We clearly have that $x\neq x^*$ and $K \subseteq D^* \cup \{x^*\} \subseteq V-\{x\}$. Since $(G,\mathscr{C})$ is a convex geometry and $D^*$ is a $\mathscr{C}$-copoint attached at $x^*$, then, by Theorem~\ref{62third}, $D^* \cup \{x^*\}$ is a $\mathscr{C}$- convex set. Then, there exists a $\mathscr{C}$-copoint $D$ attached at $x$ such that $D^* \cup \{x^*\} \subseteq D$. Thus we have $|D \cap X| \geq |(D^* \cup \{x^*\}) \cap X|=|D^* \cap X|+1,$ contradicting the fact that $D^*$ is realizing $\ell$. Thus $X \subseteq D^* \cup \{x^*\}$ and so $(X-\{x^*\}) \cup K \subseteq D^*$.
\end{proof}

For details on those and other definitions about convexities, we refer the reader to~\cite{vv93}.

\section{Neighbourhood convexity and its properties}
\label{sec:neighbourhood}

In this section, we introduce the neighbourhood convexity and give some of its main properties.
We begin with the definition of two operators: the common closed neighbourhood and the neighbourhood closure.

\begin{defn}[\cite{1020}]\label{def:main}
{\rm Let $G=(V,E)$ be a graph and let $X \subseteq V$. The {\bf common closed neighbourhood} of $X$ in $G$ is defined by
$$N[X]\coloneqq\begin{cases}
\displaystyle{\bigcap_{x\in X}  N[x]}&{\text{\rm if }} X\neq \varnothing \\
&\\
V&{\text{\rm if }} X= \varnothing.
\end{cases}$$
 The vertices in $\mathcal{S}\coloneq N[V]$ are called the {\bf stars} of the graph $G$ and $\mathcal{S}$ is called the {\bf star set} of $G.$}
\end{defn}

 Note that $\mathcal{S}=V$ if and only if $G$ is a complete graph and that, for every $X\subseteq V$, and every $S\subseteq \mathcal{S}$, we have $N[X]=N[X- S]$. The sets $X$ and $N[X]$ are typically incomparable by inclusion. Consider, for instance, $G\coloneq C_4$  and $X\coloneq\{1,3\}$. Then $N[X]=\{2,4\}$ trivially intersects $X$.
 
We indicate by $\mathcal{N}$ the {\bf family of common closed neighbourhoods} of subsets of $V$, that is, we set $$\mathcal{N}:=\{N[Y] \ | \ Y \subseteq V\}.$$

\begin{example}\label{p4}
{\rm    Let $G=P_4$ and let $X=\{1,2,3\} \subseteq [4]$. Then $$N[X]=N[1] \cap N[2] \cap N[3]=\{1,2\} \cap \{1,2,3\} \cap \{2,3,4\}=\{2\}$$ and $$\mathcal{N}=\{\varnothing,\{2\},\{3\},\{1,2\},\{2,3\}, \{3,4\}, \{1,2,3\},\{2,3,4\},[4]\}.$$}
\end{example}

\begin{defn}[\cite{1020}]
    {\rm Let $G=(V,E)$ be a a graph and let $X \subseteq V$. The {\bf neighbourhood closure} of $X$ in $G$ is defined by $$\hat{X}:=N[N[X]].$$

$X$ is called {\bf neighbourhood closed} if $\hat X=X.$ If  $x \in V$, we write $\hat{x}$, instead of $\reallywidehat{\{x\}}$.  
}
\end{defn}

The following proposition is about a series of properties of the common closed neighbourhood and of the neighbourhood closure. 

\begin{prop}
\label{101}
    Let $G=(V,E)$ be a graph and $A,B\subseteq V$. Then the following facts hold:
    \begin{enumerate}[label=$(\arabic*)$, ref=\thetheorem (\arabic*)]
        \item \label{101first} If $A \subseteq B $, then we have $N[A] \supseteq N[B]\supseteq \mathcal{S}$.
        \item \label{101second} If $A \subseteq B $, then  $A\cup \mathcal{S} \subseteq \hat{A} \subseteq \hat{B}$.
        \item \label{101third} $N[\hat{A}]=N[A]$ and $\widehat{(\hat{A})}=\hat{A}$.
        \item \label{101fourth} $N[A \cup B]=N[A] \cap N[B]$ and $\widehat{A\cup B}\supseteq \hat{A}\cup\hat{B}$. 
        \item \label{101fifth} If $x \in V$, then $\hat{x} \subseteq N[x]$.
        \item \label{101sixth} Let $A\neq\varnothing $. Then, for every $v \in A$, we have  $N[A] \subseteq C(v)$. If $N[A]=V$, then $G$ is a connected graph and $A\subseteq \mathcal{S}$.
        \item \label{101seventh} If $A\neq\varnothing $ is such that $N[A] \not = \varnothing$, then, for every $x,y \in N[A]$, we have $d(x,y)\leq 2$.
    \item \label{101ninth}   $B\subseteq \hat A$ if and only if $N[B]\supseteq N[A].$ In particular, $\hat A=\{y\in V:N[y]\supseteq N[A]\}.$ 
  \item \label{101tenth}
  $\hat A=\bigcup_{ N[Y]\supseteq N[A]} Y.$ 

  \end{enumerate}
\end{prop}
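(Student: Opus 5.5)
The plan is to prove the items roughly in the order listed, using only the definition $N[X]=\bigcap_{x\in X}N[x]$ (with $N[\varnothing]=V$) and the symmetry of adjacency: $y\in N[x]$ if and only if $x\in N[y]$. This symmetry gives the basic Galois-type fact that will do most of the work:
\[
B\subseteq N[C]\iff C\subseteq N[B],
\]
since both sides say that every vertex of $B$ is equal or adjacent to every vertex of $C$.

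Item (1) is immediate, because intersecting over more sets gives a smaller set, and $\mathcal{S}=N[V]\subseteq N[B]$ since $B\subseteq V$. The case $A=\varnothing$ is trivial.

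For (2), I apply the Galois fact with $B=A$ and $C=N[A]$ to get $A\subseteq N[N[A]]=\hat A$. By (1), $\mathcal{S}\subseteq N[N[A]]$. Applying antitonicity twice gives $\hat A\subseteq\hat B$.

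For (3), I use $A\subseteq\hat A$ to get $N[\hat A]\subseteq N[A]$. The Galois fact applied to $\hat A=N[N[A]]$ gives $N[A]\subseteq N[\hat A]$. Applying $N[\cdot]$ then yields idempotence of the closure.

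For (4), the first equality is the definition, treating the empty cases separately. The inclusion $\widehat{A\cup B}\supseteq\hat A\cup\hat B$ then follows from (2).

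For (5), I note that $x\in N[x]$, so $N[\{x\}]$ contains $x$. Hence $\hat x=N[N[x]]\subseteq N[x]$.

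For (6), every vertex of $N[A]$ lies in $N[v]\subseteq C(v)$. If $N[A]=V$, this forces $C(v)=V$, so $G$ is connected. Moreover, each $a\in A$ then satisfies $N[a]\supseteq N[A]=V$, so $a\in\mathcal{S}$.

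For (7), I pick $v\in A$. Any two vertices $x,y\in N[A]$ lie in $N[v]$, so there is a walk $x,v,y$ of length at most $2$, with degenerate cases when $x=v$ or $y=v$.

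The only item that needs care is (8). Here I use the Galois fact in the form $B\subseteq N[N[A]]\iff N[A]\subseteq N[B]$, which is exactly the claim. Taking $B=\{y\}$ gives the description $\hat A=\{y:N[y]\supseteq N[A]\}$.

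For (9), I argue by double inclusion. Each $Y$ with $N[Y]\supseteq N[A]$ satisfies $Y\subseteq\hat A$ by (8). Conversely, $Y=\hat A$ itself is admissible by (3), so $\hat A$ is contained in the union.

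I do not expect a real obstacle. The main thing to watch is the empty-set conventions, for instance $A=\varnothing$ in (4) and (8), where $N[\varnothing]=V$. These are checked directly.
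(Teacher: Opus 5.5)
Your proof is correct. For (5), (6), (7) and the first inclusion of (9) it matches the paper's argument; the real difference is in how (1)--(4) and (8) are obtained.

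The paper does not prove (1)--(4): it cites them from the earlier work of Bubboloni and Pinzauti. It then derives (8) from those items:
\begin{itemize}
\item $B\subseteq\hat A$ gives $N[B]\supseteq N[\hat A]=N[A]$ by antitonicity and (3);
\item conversely, $N[B]\supseteq N[A]$ gives $B\subseteq\hat B\subseteq\hat A$ by (2) and antitonicity.
\end{itemize}

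You instead isolate the symmetric relation $B\subseteq N[C]\iff C\subseteq N[B]$. It holds also when $B$ or $C$ is empty, thanks to $N[\varnothing]=V$. In other words, you recognize $N[\cdot]$ as an antitone Galois connection of $\mathcal{P}(V)$ with itself. With this, three things become one-line instances:
\begin{itemize}
\item extensivity $A\subseteq\hat A$ in (2);
\item the identity $N[\hat A]=N[A]$ in (3);
\item the whole of (8), which is literally the case $C=N[A]$.
\end{itemize}

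For the reverse inclusion in (9), you take $Y=\hat A$ itself as the witness, admissible by (3). The paper uses the singletons $\{x\}$ with $x\in\hat A$. Both work.

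Your route makes the proposition self-contained. It also explains at once why the hat operator is a Moore closure, which the paper states in the next corollary. The paper's version is shorter on the page only because it outsources (1)--(4).
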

\begin{proof}
  
$(1)$-$(4)$ Those are the content of \cite{1020}[Proposition 2$(i)$-$(iv)$].
\begin{enumerate}
\item[$(5)$] Let $x \in V$. Since $\{x\} \subseteq N[x]$, then, by $(1)$, we have $N[x]=N[\{x\}] \supseteq N[N[x]]=\hat{x}.$ 

      \item[$(6)$] Let $\varnothing \not = A \subseteq V$ and let $v \in A$. If $N[A]= \varnothing$, then we trivially have $N[A]\subseteq C(v)$. If $N[A]\not = \varnothing$ then, for every $x \in N[A]$, we have $x \in N[v]$. But $N[v] \subseteq C(v)$ and so $N[A] \subseteq C(v)$.

      Assume next $N[A]=V$. Then, for every $v \in A$, we have $C(v)=V$, that is $G$ is a connected graph. Finally if $x \in V$, then, for every $v \in A$, we have $x \in N[v]$, that is $v$ is a star vertex of $G$.
      \item[$(7)$] Let $\varnothing \not = A \subseteq V$ be such that $N[A] \not = \varnothing$ and let $x,y \in N[A]$. Since $A \not = \varnothing$, there exists $z \in A$ and $x,y \in N[z]$. Then, $d(x,y) \leq 2$.
  \item[$(8)$]
  Let $B\subseteq \hat A$. Then, by $(1)$ and $(3)$, $N[B]\supseteq N[\hat A]=N[A].$  Conversely, assume that $N[B]\supseteq
  N[A].$ Then, by $(1)$, $B\subseteq \hat B\subseteq \hat A.$ Now, the description of $\hat A$ given by $\hat A=\{y\in V:N[y]\supseteq N[A]\},$ follows immediately. 
  
 \item[$(9)$] By $(10)$, we immediately have $\bigcup_{ N[Y]\supseteq N[A]} Y\subseteq \hat A.$ Pick now $x\in \hat A.$ Then, by $(10)$, $N[x]\supseteq N[A]$. Thus $\bigcup_{ N[Y]\supseteq N[A]} Y\supseteq \{x\}.$ 
 As a consequence $\hat A=\bigcup_{ N[Y]\supseteq N[A]} Y.$
 

\end{enumerate}  
\end{proof}

\begin{oss}
{\rm Let $G=(V,E)$ be a graph and $A \subseteq V$. If $A= \varnothing$, then Proposition~\ref{101seventh} does not hold. Indeed, consider $G=P_4$. Then we have $N[A]=V=[4]$ and $d(1,4)=3 \not \leq 2$.}
    
\end{oss}

\begin{cor}
\label{340}
   Let $G=(V,E)$ be a graph. Then the following facts hold:
    \begin{enumerate}[label=$(\arabic*)$, ref=\thetheorem (\arabic*)]
        \item \label{340first} The family $\mathcal{N}$ is closed by intersections and coincides with the set of the neighbourhood closed subsets of $V$. Moreover, $\mathcal{N}$ is a convexity on $V$ if and only if $G$ has no star vertex.
        \item \label{340second} The function that maps $X \in \mathcal{P}(V)$ in $\hat{X} \in \mathcal{P}(V)$ is an extensive, isotone and idempotent  
        closure operator on $V$, that is, a Moore closure operator on $V$. We call such operator, the {\bf hat operator}. 
    \end{enumerate} 
    
\end{cor}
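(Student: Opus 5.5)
Both parts follow from Proposition~\ref{101}; I would first record the Galois-connection identity $N[N[N[Y]]]=N[Y]$ for every $Y\subseteq V$. This is Proposition~\ref{101third} applied to $A=Y$: indeed $N[\hat Y]=N[Y]$ and $\hat Y=N[N[Y]]$.

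\emph{Part (1).} If $X\in\mathcal N$, say $X=N[Y]$, then the identity gives $\hat X=N[N[N[Y]]]=N[Y]=X$, so $X$ is neighbourhood closed. Conversely, if $\hat X=X$, then $X=N[N[X]]$, which is of the form $N[Y]$ with $Y=N[X]$, so $X\in\mathcal N$. Thus $\mathcal N$ is exactly the family of neighbourhood closed sets. For closure under intersections, take $N[Y_1]$ and $N[Y_2]$; Proposition~\ref{101fourth} gives $N[Y_1]\cap N[Y_2]=N[Y_1\cup Y_2]\in\mathcal N$. For the convexity statement, note that $V=N[\varnothing]\in\mathcal N$ always, and intersections are already handled. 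So $\mathcal N$ is a convexity if and only if $\varnothing\in\mathcal N$. By Proposition~\ref{101first}, every $N[Y]\supseteq N[V]=\mathcal S$, and $N[V]$ itself lies in $\mathcal N$. Hence the smallest member of $\mathcal N$ is $\mathcal S$, and $\varnothing\in\mathcal N$ if and only if $\mathcal S=\varnothing$, that is, if and only if $G$ has no star vertex.

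\emph{Part (2).} Each required property is a single line from Proposition~\ref{101}. Extensivity, $A\subseteq\hat A$, and isotonicity, $A\subseteq B\Rightarrow\hat A\subseteq\hat B$, are both contained in Proposition~\ref{101second}. Idempotence, $\widehat{\hat A}=\hat A$, is Proposition~\ref{101third}. Together these are exactly the axioms of a Moore closure operator.

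I do not expect any real obstacle here. The only point requiring care is the convexity claim in Part (1): it depends on the observation that $\mathcal S$ is the minimum of $\mathcal N$, so that $\varnothing$ belongs to $\mathcal N$ precisely when $\mathcal S$ is empty.
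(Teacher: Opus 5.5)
Your proposal is correct and follows the paper's proof essentially step for step. It uses Proposition~\ref{101}(3) for $N[N[N[Y]]]=N[Y]$ and part (4) for intersections, the fact that every member of $\mathcal N$ contains $N[V]=\mathcal S$ to decide when $\varnothing\in\mathcal N$, and parts (1)--(3) for the Moore closure axioms.
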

\begin{proof}
    \begin{enumerate}
        \item[$(1)$] The fact that $\mathcal{N}$ is closed by intersection follows from Proposition~\ref{101fourth}.
        
        We now show that if $X \subseteq V$, then we have $X\in \mathcal{N}$ if and only if $\hat{X}=X$. If $X \subseteq V$ is such that $X=\hat{X}$, then we have $X=N[N[X]] \in \mathcal{N}$.
        On the other hand, let $X \in \mathcal{N}$. Then we have $X=N[Y]$ for some $Y \subseteq V$. Thus, by Proposition~\ref{101third}, we have $$\hat{X}=N[N[N[Y]]]=N[\hat{Y}] =N[Y]=X.$$ 

Since $\mathcal{N}$ is closed by intersections  and $V=N[\varnothing]$,  we have that $\mathcal{N}$ is a convexity on $V$ if and only if $\varnothing \in \mathcal{N}$. We have already noted that $N[V]$ is the set of star vertices in $G$. So if $G$ has no star vertices, then $\varnothing=N[V] \in \mathcal{N}$. On the other hand if $N[V] \not = \varnothing$, then, for every $A \subseteq V$,  we have $N[A] \supseteq N[V] \not = \varnothing$ and so $\varnothing \not \in \mathcal{N}$. Thus $\mathcal{N}$ is a convexity on $V$ if and only if $G$ has no star vertex.

\item[$(2)$] This is immediate from Proposition~\ref{101first}-\ref{101third}.
\end{enumerate}\end{proof}

We now naturally extend $\mathcal{N}$ in order to obtain a convexity, whatever the graph is.
\begin{defn}{\rm\label{def:neighborconvexity}
    Let $G=(V,E)$ be a graph. The {\textbf{neighbourhood convexity}} on $V$ is defined by $$n:=\mathcal{N} \cup \{\varnothing\}.$$}
\end{defn}


    Note that, for every $Y\subseteq V,$ the set $K\coloneq N[Y]$ is $n$-convex. 
    Moreover if $\varnothing \neq K\subseteq V$ is an  $n$-convex set, then there exists $Y\subseteq V$ such that $K=N[Y].$ If further $K\neq V,$ then such a $Y$ is necessarily non-empty. In particular, for every $X\subseteq V$, $\hat X=N[N[X]]$ is $n$-convex.

    
 We exhibit a first interesting set of $n$-convex sets and establish some basic properties of $n$-convex sets.
\begin{lemma}\label{max-clique}
Let $G=(V,E)$ be any graph. If $X\subseteq V$ is a maximal clique, then $X=N[X]$ is $n$-convex. 
\end{lemma}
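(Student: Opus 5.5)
The plan is to prove the set equality $X=N[X]$ by double inclusion. Convexity will then follow at once from the remark after Definition~\ref{def:neighborconvexity}: every set of the form $N[Y]$ is $n$-convex, and here we take $Y=X$.

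\emph{Step 1 ($X\subseteq N[X]$).} Since $X$ is a clique, it is non-empty, so $N[X]=\bigcap_{x\in X}N[x]$. Fix $y\in X$. For every $x\in X$, either $x=y$, and then $y\in N[x]$ trivially, or $x\neq y$, and then $\{x,y\}\in E$ because $X$ is a clique, so again $y\in N[x]$. Hence $y\in N[X]$.

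\emph{Step 2 ($N[X]\subseteq X$).} Let $z\in N[X]$ and suppose, for a contradiction, that $z\notin X$. Then $z\neq x$ for all $x\in X$, and $z\in N[x]$ gives $\{z,x\}\in E$ for every $x\in X$. Therefore $X\cup\{z\}$ is a clique strictly containing $X$, which contradicts the maximality of $X$. So $z\in X$.

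Combining the two steps gives $X=N[X]$, and $X$ is $n$-convex as an element of $\mathcal{N}\subseteq n$. No step is a genuine obstacle. The only point needing care is that $X\neq\varnothing$, which holds because cliques are non-empty by definition. This ensures that $N[X]$ is given by the intersection formula rather than being equal to $V$.
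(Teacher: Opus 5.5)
Your proof is correct and follows essentially the same route as the paper: show $X\subseteq N[X]$ using that $X$ is a clique, show $N[X]\subseteq X$ using maximality, and conclude $X\in\mathcal{N}\subseteq n$. Your explicit checks that $X\neq\varnothing$ and that $z\neq x$ (so that $z\in N[x]$ really gives an edge) are details the paper leaves implicit.
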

\begin{proof}
 Since $X$ induces a complete subgraph, we have $X\subseteq N[x]$ for all $x\in X$. Thus $X\subseteq \bigcap_{x\in X}N[x]=N[X]$. Reciprocally, let $y\in N[X]$ which means that $y\in N[x]$ for all $x\in X$. Since $X$ is maximal, $y$ must belong to $X$ and hence $N[X]=X$. In particular, $X\in \mathcal{N}\subseteq n.$ 
\end{proof}

\begin{prop}
\label{102}
Let $G=(V,E)$ be a graph and $v\in V$. Then the following facts hold:
\begin{enumerate}[label=$(\arabic*)$, ref=\thetheorem (\arabic*)]
    \item \label{102first} If $\varnothing \not = K \subsetneq V$ is an $n$-convex set, then for every $x,y \in K$, $d(x,y)\leq 2$. In particular, $K \subseteq C(x)$ for all $x \in K$.
    \item \label{102second} Every non-empty $n$-convex set contains $\mathcal{S}$. 
    \item \label{102third} A non-empty subset $X$ of $V$ is $n$-convex if and only if, for every $Y\subseteq V$, $N[Y]\supseteq N[X]$ implies $Y\subseteq X.$
\end{enumerate}
\end{prop}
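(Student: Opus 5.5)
All three parts reduce to the fact, noted after Definition~\ref{def:neighborconvexity}, that every non-empty $n$-convex set $K$ has the form $K=N[Y]$ for some $Y\subseteq V$, with $Y\neq\varnothing$ when $K\neq V$. Combined with Proposition~\ref{101}, each item is then short.

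For $(1)$, take $\varnothing\neq K\subsetneq V$ $n$-convex and write $K=N[Y]$ with $Y\neq\varnothing$. Since $K\neq\varnothing$, we have $N[Y]\neq\varnothing$, so Proposition~\ref{101seventh} gives $d(x,y)\le 2$ for all $x,y\in K$. In particular, any two vertices of $K$ lie in the same connected component, so $K\subseteq C(x)$ for every $x\in K$. The argument must use $K\neq V$, since the remark following Proposition~\ref{101} shows the distance bound fails for $K=V=N[\varnothing]$ (e.g.\ for $P_4$).

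For $(2)$, write a non-empty $n$-convex $K$ as $K=N[Y]$. By Proposition~\ref{101first}, applied to $Y\subseteq V$, we get $N[Y]\supseteq N[V]=\mathcal{S}$. If $K=V$ the inclusion is trivial anyway.

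For $(3)$, I will use the fact that a non-empty $X$ is $n$-convex exactly when $X\in\mathcal{N}$, since the only set added in passing from $\mathcal{N}$ to $n$ is $\varnothing$. By Corollary~\ref{340first}, this happens exactly when $\hat X=X$. By Proposition~\ref{101ninth}, $\hat X=\{y\in V: N[y]\supseteq N[X]\}$, and $Y\subseteq\hat X$ if and only if $N[Y]\supseteq N[X]$. Both directions then follow.
\begin{itemize}
\item If $X$ is $n$-convex and $N[Y]\supseteq N[X]$, then $Y\subseteq\hat X=X$.
\item Conversely, suppose the condition holds for all $Y$. Take $Y=\hat X$: by Proposition~\ref{101third}, $N[\hat X]=N[X]$, so the condition gives $\hat X\subseteq X$. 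Proposition~\ref{101second} gives $X\subseteq\hat X$, hence $X=\hat X$ and $X$ is $n$-convex.
\end{itemize}

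No step is a real obstacle. The only points needing care are the $K\neq V$ hypothesis in $(1)$, needed to guarantee $Y\neq\varnothing$, and the non-emptiness hypothesis in $(3)$, needed so that $n$-convexity coincides with membership in $\mathcal{N}$ even when $G$ has stars.
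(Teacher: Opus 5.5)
Your proof is correct and follows the paper's argument essentially line by line. Part $(1)$ is identical, and in $(2)$ and $(3)$ you make only cosmetic substitutions within Proposition~\ref{101}: you use item $(1)$ directly instead of $K=\hat K\supseteq K\cup\mathcal{S}$, and in the converse of $(3)$ you test with the single set $Y=\hat X$ instead of invoking the union formula $\hat X=\bigcup_{N[Y]\supseteq N[X]}Y$.
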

\begin{proof} 
$(1)$ Let $\varnothing \neq K \subsetneq V$ be an $n$-convex set. Then there exists $\varnothing \neq A\subseteq V$ such that $K=N[A].$ Then, by Proposition~\ref{101seventh}, for every $x, y\in K$, we have $d(x,y)\leq 2.$ In particular,  $y\in C(x).$ 
\smallskip

$(2)$ Let $\varnothing \neq K \in n$. Then, by Corollary~\ref{340first} and Proposition~\ref{101second}, $K=\hat K\supseteq K\cup \mathcal{S}$.
\smallskip

 $(3)$  Assume that $\varnothing \neq X\subseteq V$ is $n$-convex. Then, by Corollary~\ref{340first}, we have $\hat X=X.$
 Let $Y\subseteq V$ be such that  $N[Y]\supseteq N[X].$ Then by Proposition~\ref{101tenth}, we have $X= \hat X \supseteq Y.$
Conversely, assume that, for every $Y\subseteq V$, $N[Y]\supseteq N[X]$ implies $Y\subseteq X.$  Then, by Proposition~\ref{101tenth},
we have
$\hat X=\bigcup_{ N[Y]\supseteq N[X]} Y\subseteq X$ and hence $X=\hat X$. Thus, by Corollary~\ref{340first}, we deduce that $X$ is $n$-convex.
\end{proof}

Prior to discussing convex geometries in the next section, it is necessary to clarify the notions of convex hull and extreme vertices in the context of $n$-convexity.

\begin{prop}\label{hathull} Let $G=(V,E)$ be a graph and $X\subseteq V$. Then
\begin{enumerate}[label=$(\arabic*)$, ref=\thetheorem (\arabic*)]
    \item[$(1)$] \label{hathull first}
   $
    [X]_n=\left\{
\begin{array}{rl}
\varnothing \ \mbox{if} \ X=\varnothing,  \\
\hat X  \  \mbox{if} \ X\neq \varnothing.
\end{array}
\right.$
\item[$(2)$] \label{hathull second} If $\hat X=X,$ then $X$ is $n$-convex.
\item[$(3)$] \label{hathull third} If $X\neq \varnothing$ is $n$-convex, then $\hat X=X.$ 
\item[$(4)$] \label{hathull fourth} For every $n$-convex set $K\subseteq V,$ with $|K|\geq 2$, we have  $\mathrm{ex}_n(K)\cap \mathcal{S}=\varnothing.$ 
\end{enumerate}    
    \end{prop}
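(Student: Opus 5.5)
Items $(2)$ and $(3)$ are essentially restatements of Corollary~\ref{340first} combined with Definition~\ref{def:neighborconvexity}, so I would prove them first. For $(2)$: if $\hat X=X$, then $X=N[N[X]]\in\mathcal{N}\subseteq n$. For $(3)$: if $X\neq\varnothing$ is $n$-convex, then $X\in n-\{\varnothing\}\subseteq\mathcal{N}$. Corollary~\ref{340first} then says $X$ is neighbourhood closed, that is, $\hat X=X$.

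For $(1)$ I would split on whether $X$ is empty. If $X=\varnothing$, the hull is $\varnothing$, because $\varnothing\in n$ by definition. If $X\neq\varnothing$, I would show that $\hat X$ is the minimum $n$-convex set containing $X$. First, $\hat X=N[N[X]]$ is $n$-convex, and $X\subseteq\hat X$ by Proposition~\ref{101second}. Next, let $C$ be any $n$-convex set with $X\subseteq C$. Then $C\neq\varnothing$, so $\hat C=C$ by $(3)$. Isotonicity (Proposition~\ref{101second}) then gives $\hat X\subseteq\hat C=C$. Hence $[X]_n=\hat X$.

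For $(4)$, let $K$ be $n$-convex with $|K|\geq2$, and suppose some $s\in\mathcal{S}$ were an extreme point of $K$. Then $K-\{s\}$ is $n$-convex, and it is non-empty because $|K|\geq 2$. By Proposition~\ref{102second}, every non-empty $n$-convex set contains $\mathcal{S}$, so $s\in K-\{s\}$, which is a contradiction. Therefore $\mathrm{ex}_n(K)\cap\mathcal{S}=\varnothing$.

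None of the steps is a real obstacle. The only point needing care is in $(1)$: the comparison set $C$ must be non-empty before $(3)$ can be invoked, and this holds because $C\supseteq X\neq\varnothing$. Likewise, in $(4)$ the hypothesis $|K|\ge2$ is used exactly to guarantee that $K-\{s\}$ is non-empty.
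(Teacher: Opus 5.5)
Your proof is correct and follows essentially the same route as the paper: $(2)$ and $(3)$ come from Corollary~\ref{340first}, and $(1)$ combines $X\subseteq\hat X\in n$ with isotonicity of the hat operator on a non-empty convex superset. The paper applies that isotonicity step to $[X]_n$ itself rather than to an arbitrary convex $C\supseteq X$, which is the same argument. For $(4)$, both you and the paper use Proposition~\ref{102second}, with $|K|\geq 2$ guaranteeing $K-\{s\}\neq\varnothing$.
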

 \begin{proof}

$(1)$ In any convexity, the convex hull of the empty set is empty and thus $[\varnothing]_n=\varnothing.$ Assume next that $\varnothing\neq  X\subseteq V$. We show that $[X]_n=\hat X.$ We know that $\hat X\in n$ and, by Proposition~\ref{101second}, we have $\hat X\supseteq X$.
Thus $\displaystyle{[X]_n=\bigcap_{K\in n, K\supseteq X}K\subseteq \hat X.}$ Now, $[X]_n\in n$ and $[X]_n\supseteq X\supsetneq \varnothing.$ Thus $[X]_n\in \mathcal{N}$ and hence, by Corollary~\ref{340first} and the isotonicity of the hat operator established in Proposition~\ref{101second}, we deduce $[X]_n=\reallywidehat{[X]_n}\supseteq \hat X $. 
As a consequence, $[X]_n=\hat X$ as desired. 
\smallskip

$(2)$ By Corollary~\ref{340first}, if $X=\hat X,$ we  have $X\in \mathcal{N}\subseteq n.$
 \smallskip
 
  $(3)$ If $X\neq \varnothing$ and is $n$-convex, then $X\in \mathcal{N}$ and thus 
  by Corollary~\ref{340first},  $\hat X=X.$
  \smallskip
  
  $(4)$ Assume, by contradiction, that there exist an $n$-convex set $K\subseteq V$ of size at least $2$ and  $x\in \mathrm{ex}_n(K)\cap \mathcal{S}.$ Then $\varnothing\neq K- \{x\}\not\supseteq\mathcal{S}$. Thus, by Proposition~\ref{102second}, $K- \{x\}\notin n,$ against $x\in \mathrm{ex}_n(K).$
\end{proof} 

\section{The neighbourhood convex geometry}
\label{sec:convexgeometry}

In this section we start our study of the $n$-convex geometry, giving information on true twins, degree of vertices, diameter, star vertices and connectedness.

\subsection{True twins and $n$-convexity}

We first describe an $n$-convex set by the true twins equivalence classes of its vertices.
This easy result is a preparatory one for obtaining  the absence of pairs of true twins in an $n$-convex geometry (Theorem~\ref{noclosedtwins}). 
\begin{lemma}\label{lemma twin}
 Let $G=(V,E)$ be a graph. Then, the following facts hold:
 \begin{enumerate}[label=$(\roman*)$, ref=\thetheorem $(\roman*)$]
     \item \label{lemma twin first} Let $\varnothing\neq X\subseteq [x]_N$. Then  $N[X]=N[x]$ and $\hat X=\hat x.$
     \item \label{lemma twin second} Let $ K\in n$ and $x\in K$. Then $K\supseteq [x]_N.$
     In particular, every non-empty $n$-convex set is union of true twin classes.
     \item \label{lemma twin third} Let $X\subseteq V$. Then 
 $$\hat X\supseteq \bigcup_{x\in X}\reallywidehat{[x]_N}\supseteq \bigcup_{x\in X}[x]. $$
 \end{enumerate}
\end{lemma}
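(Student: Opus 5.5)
The plan is to derive all three parts directly from the definitions of $N[\cdot]$ and of the hat operator, together with Proposition~\ref{101}, Corollary~\ref{340first} and Proposition~\ref{hathull}. Throughout, we use that $y\in[x]_N$ means exactly $N[y]=N[x]$.

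For $(i)$, let $\varnothing\neq X\subseteq[x]_N$. Every $y\in X$ satisfies $N[y]=N[x]$, so
$$N[X]=\bigcap_{y\in X}N[y]=N[x].$$
This uses $X\neq\varnothing$, so that the intersection is indexed by a non-empty set. Applying $N[\cdot]$ once more gives $\hat X=N[N[X]]=N[N[x]]=\hat x$.

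For $(ii)$, let $K\in n$ and $x\in K$. Since $K\neq\varnothing$, Proposition~\ref{hathull} gives $K=\hat K$. By Proposition~\ref{101ninth}, $\hat K=\{y\in V: N[y]\supseteq N[K]\}$. Since $x\in K$, we have $N[K]\subseteq N[x]$. Hence any $y\in[x]_N$ satisfies $N[y]=N[x]\supseteq N[K]$, so $y\in\hat K=K$. This shows $K\supseteq[x]_N$. The consequence that every non-empty $n$-convex set is a union of true twin classes follows at once, because $K=\bigcup_{x\in K}[x]_N$.

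For $(iii)$, fix $x\in X$. Since $[x]_N$ is $n$-convex by $(i)$, we have $\reallywidehat{[x]_N}=\hat x\supseteq[x]_N$. I read the rightmost term $[x]$ in the statement as $[x]_N$. For the outer inclusion, Proposition~\ref{101ninth} shows that $y\in[x]_N$ implies $N[y]=N[x]\supseteq N[X]$, so $[x]_N\subseteq\hat X$. By isotonicity of the hat operator (Corollary~\ref{340second}), $\reallywidehat{[x]_N}\subseteq\hat{\hat X}=\hat X$. Taking the union over $x\in X$ gives both inclusions. If $X=\varnothing$, the unions are empty and the claim is trivial.

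I do not expect a real obstacle. The only points needing care are the empty-set conventions, which are handled by requiring $X\neq\varnothing$ in $(i)$ and $K\neq\varnothing$ in $(ii)$, and the interpretation of $[x]$ as $[x]_N$ in the statement of $(iii)$.
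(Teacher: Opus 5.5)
Your proof is correct in substance and close to the paper's. The only difference in route is in $(ii)$ and $(iii)$. There you go through the description $\hat A=\{y\in V: N[y]\supseteq N[A]\}$ of Proposition~\ref{101ninth}. The paper instead applies isotonicity to $\{x\}\subseteq K$ (resp.\ $\{x\}\subseteq X$) and then uses $(i)$, e.g.\ $K=\hat K\supseteq \hat x=\reallywidehat{[x]_N}\supseteq [x]_N$. That is a line shorter, but both routes work.

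There is, however, one false sentence in $(iii)$: $[x]_N$ is not $n$-convex in general, and $(i)$ does not say so. In $P_3$ with centre $2$, the closed neighbourhoods $\{1,2\},\{1,2,3\},\{2,3\}$ are pairwise distinct, so $[1]_N=\{1\}$. Yet $\hat 1=N[\{1,2\}]=\{1,2\}\neq\{1\}$, so $\{1\}$ is not $n$-convex.

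The inclusion you actually need, $\reallywidehat{[x]_N}=\hat x\supseteq [x]_N$, still holds, for a different reason. The equality is $(i)$, which applies since $x\in[x]_N\neq\varnothing$. The inclusion is the extensivity of the hat operator (Proposition~\ref{101second}). With that justification substituted, your argument is complete.
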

\begin{proof} $(i)$ Since $X\neq \varnothing,$ we have $N[X]=\cap_{z\in X} N[z]=\cap_{z\in X}N[x]=N[x].$ It follows that $\hat X=N[N[X]]=N[N[x]]=\hat x.$
\smallskip

$(ii)$ By $\{x\}\subseteq K,$ using $(i)$, we deduce 
$K=\hat K\supseteq \hat x=\reallywidehat{[x]_N}\supseteq [x]_N.$
\smallskip

$(iii)$ Let $x\in X$. By $\{x\}\subseteq X,$ using $(i)$ on $[x]_N\neq \varnothing$, we deduce $\hat X\supseteq \hat x=\reallywidehat{[x]_N}.$ The last inclusion is obvious.
\end{proof}
\begin{theorem}\label{noclosedtwins}
Let $G$ be an $n$-convex geometry. Then $G$ is true twins free. 
\end{theorem}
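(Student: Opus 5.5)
We argue by contradiction: assume $G=(V,E)$ is an $n$-convex geometry containing a pair of distinct true twins $x,y$, so $N[x]=N[y]$. The idea is to find an $n$-convex set $K$ with $x,y\notin K$ such that $K\cup\{x\}$ and $K\cup\{y\}$ have the same hull, both containing $x$ and $y$. This contradicts the anti-exchange property, which holds by Theorem~\ref{62second}.

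Take $K=\varnothing$, which is $n$-convex. By Proposition~\ref{hathull}(1), $[\{x\}]_n=\hat x$ and $[\{y\}]_n=\hat y$. Lemma~\ref{lemma twin first} applied to $X=\{x\}$ and $X=\{y\}$, both contained in $[x]_N$, gives $\hat x=\hat y=\reallywidehat{[x]_N}$. Since $\hat x\supseteq \{x\}\cup\{y\}$ by Lemma~\ref{lemma twin third} (or directly: $N[y]\supseteq N[x]$ gives $y\in\hat x$ by Proposition~\ref{101ninth}), we obtain $y\in[\varnothing\cup\{x\}]_n$ and $x\in[\varnothing\cup\{y\}]_n$ with $x,y\in V-\varnothing$. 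Anti-exchange then forces $x=y$, a contradiction.

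An alternative route, if one prefers the extreme-point definition, is to observe via Lemma~\ref{lemma twin second} that every nonempty $n$-convex set is a union of true twin classes. Then the class $[x]_N$, of size at least $2$, can never be peeled off one vertex at a time. Concretely, by Theorem~\ref{62third} we can grow $\varnothing$ by single vertices through a chain of convex sets up to $V$. The first set in the chain meeting $[x]_N$ is a singleton added to a set disjoint from $[x]_N$, so it contains exactly one vertex of $[x]_N$, contradicting Lemma~\ref{lemma twin second}.

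There is no real obstacle here. The only point needing care is that the empty set is convex and that hulls of nonempty sets are the hat closures (Proposition~\ref{hathull}(1)); that is what allows us to take $K=\varnothing$ in the anti-exchange property.
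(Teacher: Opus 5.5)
Your proof is correct, and both of your routes are valid. They differ from the paper's argument in which characterization of a convex geometry they exploit.

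\textbf{The paper's route.} The paper works directly from the extreme-point definition. Assuming $|[x]_N|\ge 2$, it shows that the nonempty convex set $\widehat{[x]_N}$ has \emph{no} extreme points. Removing any vertex $y$ still leaves a nonempty subset of $[x]_N$ inside $S=\widehat{[x]_N}\setminus\{y\}$. By Lemma~\ref{lemma twin first}, the hat of that subset already regenerates all of $\widehat{[x]_N}$, so $\hat S\neq S$. Hence $[\mathrm{ex}_n(\widehat{[x]_N})]_n=\varnothing\neq\widehat{[x]_N}$, contradicting the definition.

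\textbf{Your routes.} You instead invoke the Edelman--Jamison equivalence (Theorem~\ref{62}). You test anti-exchange at the base $K=\varnothing$, or, in your second route, the augmentation property along a chain of convex sets starting from $\varnothing$.

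\textbf{Comparison.} All three arguments rest on the same fact: the hull of any single vertex contains its whole true-twin class. What they buy differs.
\begin{itemize}
\item Your anti-exchange argument is shorter. It locates the obstruction at the very bottom of the lattice: two distinct points generate the same hull.
\item Your chain argument makes vivid that a twin class can never be entered one vertex at a time.
\item The paper's argument does not need the equivalence theorem at all. It exhibits an explicit convex set, $\widehat{[x]_N}$, that violates the defining property $K=[\mathrm{ex}_n(K)]_n$.
\end{itemize}

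The paper also notes in the same proof that distinct stars are true twins, so $|\mathcal{S}|\le 1$. This follows immediately from your conclusion as well.
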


\begin{proof} 
Let $G=(V,E)$ and assume, by contradiction, that there exists $x\in V$ such that $|[x]_N|\geq 2.$
Our claim is that $\reallywidehat{[x]_N}$ is a non-empty $n$-convex set without extreme points and so, we reach a contradiction.    Obviously, $\reallywidehat{[x]_N}$ is non-empty and $n$-convex. We show that, for every $y\in \reallywidehat{[x]_N},$ the set 
 $S\coloneq\reallywidehat{[x]_N}\setminus \{y\}$ is not $n$-convex. By $|[x]_N|\geq 2$, we deduce that $\varnothing \neq [x]_N\setminus \{y\}\subseteq S$ and thus $S\neq \varnothing$. Therefore, by Proposition~\ref{hathull third}, it is enough to show that $S\neq \reallywidehat{S}$. 
Since $[x]_N\setminus\{y\}\neq \varnothing$, applying Lemma~\ref{lemma twin first},
we indeed have $$\hat S=\reallywidehat{\left(\reallywidehat{[x]_N}\setminus \{y\}\right)}\supseteq \reallywidehat{[x]_N\setminus\{y\}}=\reallywidehat{[x]_N}\neq S.$$ 
Finally, assume, by contradiction, that  there exist  $s\neq  s'\in \mathcal{S}$. Then   $s, s'$ are a  pair of true twins, because $N[s]=V=N[s']$, against the fact that no pairs of true twins exists.
\end{proof}
Note that it is instead possible to have a pair of false twins in an $n$-convex geometry. For instance, this happens in the $n$-convex geometry $C_4$ (see Corollary~\ref{regular2}).

\subsection{Graphs which are $n$-convex geometries}
As we mentioned in the introduction, one of the main differences between $n$-convexity and the classical graph convexities built through paths, is that there are comparatively few graphs which are an $n$- convex geometry. In this section, we will be more precise about this fact.

We begin to explore which graphs are $n$-convex geometries. The next theorem tells us that there is only one disconnected graph, up to isomorphisms, so this happens: the disconnected graph $2K_1$ on two vertices. This is something that, does not happen in convex geometries $\mathscr{C}$ defined by paths. Indeed, those geometries are typically hereditary, and thus  a graph is a $\mathscr{C}$-convex geometry if and only if all its connected components  are $\mathscr{C}$-convex geometries.
\begin{theorem}
\label{103}
    A disconnected graph $G$ is an $n$-convex geometry if and only if $G=2K_1.$
\end{theorem}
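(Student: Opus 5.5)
We prove the two directions separately, starting with the easy one.

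\emph{The graph $2K_1$ is an $n$-convex geometry.} Let $V=\{a,b\}$ with no edge. Then $N[a]=\{a\}$, $N[b]=\{b\}$, $N[V]=\varnothing$ and $N[\varnothing]=V$, so $n=\{\varnothing,\{a\},\{b\},V\}=2^V$. This is the free convexity, which is always a convex geometry.

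\emph{A disconnected $n$-convex geometry is $2K_1$.} Let $G=(V,E)$ be disconnected. By Proposition~\ref{101sixth}, $N[A]\neq V$ for every $A\neq\varnothing$, since $N[A]=V$ would force $G$ to be connected. Hence $V=N[\varnothing]$ is the only $n$-convex set that is not contained in a single connected component. Indeed, by Proposition~\ref{102first}, every non-empty $n$-convex set $K\subsetneq V$ satisfies $K\subseteq C(x)$ for any $x\in K$.

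Now apply the extreme-point condition to $K=V$, as in Theorem~\ref{62first}: $V=[\mathrm{ex}_n(V)]_n$. Let $x$ be an extreme of $V$. Then $V-\{x\}$ is $n$-convex. It is non-empty, since $|V|\geq 2$, and different from $V$, so it lies inside one component. This means $V-\{x\}$ is a connected component and $\{x\}$ is the only other component. Thus $G$ has exactly two components, $\{x\}$ and $C\coloneq V-\{x\}$.

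The set $\mathrm{ex}_n(V)$ is non-empty. Otherwise $[\mathrm{ex}_n(V)]_n=[\varnothing]_n=\varnothing\neq V$, which contradicts $G$ being a geometry. So some extreme $x$ exists, and $G=\{x\}\,\cupdot\, C$ with $C$ a connected component.

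Suppose $|C|\geq 2$. Any other extreme $y\neq x$ would have to give the singleton component $\{y\}$, which forces $C=\{y\}$ and contradicts $|C|\ge2$. So $\mathrm{ex}_n(V)=\{x\}$. Then $V=[\{x\}]_n=\hat x$. Since $\hat x\subseteq N[x]=\{x\}$ by Proposition~\ref{101fifth}, this gives $V=\{x\}$, a contradiction. Hence $|C|=1$ and $G=2K_1$.

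The main point requiring care is the claim that every proper non-empty $n$-convex set lies in one component; this is exactly Proposition~\ref{102first}. With that in hand the rest is bookkeeping on the extremes of $V$.
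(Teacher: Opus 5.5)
Your proof is correct. Its first half is the paper's argument. An extreme $x$ of $V$ makes $V-\{x\}$ a proper non-empty $n$-convex set, hence a whole component by Proposition~\ref{102first}, so $x$ is isolated.

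The endgame differs. The paper assumes $|V|\geq 3$ and notes that $\{x\}=N[x]$ is $n$-convex. It then uses the extension property of Theorem~\ref{62third} to obtain an $n$-convex set $\{x,w\}$. This set is proper because $|V|\geq 3$, yet it meets two components, contradicting Proposition~\ref{102first} a second time.

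You argue instead that when $|C|\geq 2$ a second extreme cannot exist, so $\mathrm{ex}_n(V)=\{x\}$. The defining property of a convex geometry then gives $V=[\{x\}]_n=\hat x\subseteq N[x]=\{x\}$.

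Your route needs only the definition of convex geometry, Proposition~\ref{hathull} and Proposition~\ref{101fifth}, and avoids the equivalences of Theorem~\ref{62} entirely. It is the same mechanism the paper later uses in Theorem~\ref{stars-extreme} to prove $|\mathrm{ex}_n(V)|\geq 2$: a unique extreme $x$ forces $V=\hat x$, and hence $N[x]=\mathcal{S}$. The paper's route is shorter once Theorem~\ref{62} is available.

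A minor presentation point: you pick an extreme $x$ before showing $\mathrm{ex}_n(V)\neq\varnothing$. Moving that justification earlier fixes it.
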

\begin{proof} If $G=2K_1$, it is immediately checked that $n$ is the free convexity and thus $G$ is an $n$-convex geometry.
Conversely, suppose that $G=(V,E)$ is disconnected and an $n$-convex geometry. Then $|V| \geq 2$ and we want to show that, indeed, $|V|=2$.  Assume, to the contrary, that $|V|\geq 3$. Let $v \in \operatorname{ex}_n(V)$. Then we have $V-\{v\} \in n$ and $\varnothing \not =V-\{v\}\subsetneq V$. Let $u \in V-\{v\}$. Then, by Proposition~\ref{102first}, we have $V-\{v\} \subseteq C(u)$. Since $G$ is disconnected, we necessarily have $V-\{v\}=C(u)$ and $v$ is an isolated vertex of $G$. So, $\{v\}=N[x]\in n$. Since $|V| \geq 2$ and $(G,n)$ is a convex geometry, then, by Theorem~\ref{62second}, there exists $w \in V-\{v\}$ such that $\{v,w\}$ is an $n$-convex set. 
But $v$ is isolated and so there does not exist a connected component of $G$ which contains both $v$ and $w$. This contradicts Proposition~\ref{102first}.
\end{proof}
In the next proposition we see that the only complete graph  which is an $n$-convex geometry, is the graph with one vertex.
\begin{prop}\label{104} The only complete graph which is an $n$-convex geometry is $K_1.$
\end{prop}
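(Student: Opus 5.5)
The plan is to treat the two directions separately, with the converse resting on Theorem~\ref{noclosedtwins}.

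For $K_1$, with $V=\{v\}$, we have $N[v]=\{v\}=V$. The $n$-convex sets are then $\varnothing$ and $V$, so $n$ is the free convexity $2^V$. As observed in the preliminaries, the free convexity is always a convex geometry. Alternatively, one can check directly that $\mathrm{ex}_n(V)=\{v\}$, since $V-\{v\}=\varnothing\in n$, and hence $V=[\{v\}]_n$.

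Conversely, let $G=K_m$ be an $n$-convex geometry and suppose, for a contradiction, that $m\geq 2$. Every vertex of $K_m$ is a star, so $N[x]=V$ for all $x\in V$. In particular, any two distinct vertices $x,y$ satisfy $N[x]=N[y]$ and form a pair of true twins. This contradicts Theorem~\ref{noclosedtwins}, which says an $n$-convex geometry is true twins free; the last part of that proof (at most one star) gives the same contradiction even more directly. Hence $m=1$.

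As a sanity check, and to avoid relying on that theorem, I would also note the explicit failure for $m\geq 2$. Here $\mathcal{S}=V$, and by Proposition~\ref{102second} every non-empty $n$-convex set contains $\mathcal{S}=V$, so $n=\{\varnothing,V\}$ is the coarse convexity. Then for each $v$ we have $V-\{v\}\neq\varnothing$, so $\mathrm{ex}_n(V)=\varnothing$ and $[\mathrm{ex}_n(V)]_n=\varnothing\neq V$. This is consistent with Proposition~\ref{hathull}(4). I do not expect any real obstacle; the only care needed is distinguishing $m=1$, where $V-\{v\}$ is empty and therefore convex.
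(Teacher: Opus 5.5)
Your proof is correct, but your main route differs from the paper's. The paper argues directly. In $K_m$ one has $N[A]=V$ for every $A\subseteq V$, so $n=\{\varnothing,V\}$ is the coarse convexity. The augmentation property (Theorem~\ref{62third}, applied with $K=\varnothing$) then yields a vertex $x$ with $\{x\}\in n$, which forces $V=\{x\}$.

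You instead derive the contradiction from Theorem~\ref{noclosedtwins}, since all vertices of $K_m$ are pairwise true twins. This is legitimate: that theorem is proved earlier and does not use the present proposition. It also presents the result as a special case of a general structural fact. It is, however, heavier than needed. Specialized to $K_m$, the proof of Theorem~\ref{noclosedtwins} amounts exactly to showing that $\widehat{[x]_N}=V$ has no extreme point, which is your ``sanity check''.

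That check is essentially the paper's argument, differing only in its final step. You use the definition of a convex geometry ($\mathrm{ex}_n(V)=\varnothing$, hence $[\mathrm{ex}_n(V)]_n=\varnothing\neq V$), whereas the paper uses the existence of a convex singleton; both are equally elementary. You also verify explicitly that $K_1$ is an $n$-convex geometry, which the paper leaves implicit.
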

\begin{proof} Let $G=K_n=(V,E)$ for some $n\geq 1.$ Then, for every $A \subseteq V$, we have $N[A]=V$ and thus $n=\{\varnothing,V\}$ is the coarse convexity. By Theorem~\ref{62second}, there exists $x \in V$ such that $\{x\} \in n$. Since $n=\{\varnothing,V\}$, then we necessarily have $V=\{x\}$, that is $G =K_1$.
\end{proof}
Note that, by the above theorem, $P_2\cong K_2$ and  $C_3\cong K_3$ are not $n$-convex geometries. Theorem\ref{103} also implies that a graph is almost never an hereditary $n$-convex geometry.


\begin{theorem}\label{nohered}
    Let $G$ be a graph. Then  $(G,n)$ is an hereditary convex geometry if and only if $G\in\{K_1, 2K_1\}.$
\end{theorem}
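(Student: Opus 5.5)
We have to settle what ``hereditary'' means here: every induced subgraph of $G$ is itself an $n$-convex geometry. With that reading, the statement follows from a short argument built on Proposition~\ref{104}.

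\emph{Sufficiency.} Suppose $G\in\{K_1,2K_1\}$. The induced subgraphs of $K_1$ are just $K_1$. Those of $2K_1$ are $K_1$ and $2K_1$. Proposition~\ref{104} shows that $K_1$ is an $n$-convex geometry, and Theorem~\ref{103} shows the same for $2K_1$. So in both cases every induced subgraph is an $n$-convex geometry.

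\emph{Necessity.} Suppose $(G,n)$ is hereditary and $G=(V,E)$. We argue in two steps.
\begin{enumerate}
\item Suppose $G$ had an edge $\{x,y\}$. Then $G[\{x,y\}]\cong K_2$ is an induced subgraph, so it would have to be an $n$-convex geometry. This contradicts Proposition~\ref{104}, since $K_2$ is complete with more than one vertex. Hence $G$ is edgeless, i.e.\ $G\cong mK_1$ with $m=|V|$.
\item If $m=1$, then $G=K_1$. If $m\geq 2$, then $G$ is disconnected, and Theorem~\ref{103} forces $G=2K_1$.
\end{enumerate}

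The only real obstacle is fixing the definition of hereditary. If the intended meaning is ``every connected induced subgraph'' or ``every proper induced subgraph'', the same $K_2$ argument still applies, provided $G$ has at least three vertices or $K_2$ occurs as a proper induced subgraph. The graph $G=K_2$ itself is in any case excluded directly by Proposition~\ref{104}.
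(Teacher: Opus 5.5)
Your proof is correct and follows the paper's argument essentially verbatim: an edge would give an induced $K_2$, contradicting Proposition~\ref{104}, so $G$ is edgeless, and Theorem~\ref{103} then leaves only $K_1$ and $2K_1$. Your reading of ``hereditary'' (every induced subgraph is an $n$-convex geometry) is the one the paper intends, so the closing paragraph about alternative definitions is unnecessary.
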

\begin{proof}

If $G \in\{K_1, 2K_1\}$, then  $G$ is an $n$-convex geometry by Theorem~\ref{103}. The fact that it is an hereditary $n$-convex geometry immediately follows.
 Suppose next that $(G,n)$ is an hereditary convex geometry. We claim that $E = \varnothing$. Assume the contrary. Then there exist $u \not = v \in V$ such that $\{u,v\} \in E$. Then the subgraph $H:=(\{u,v\},\{\{u,v\}\})$ is induced in $G$. As a consequence, $(H,n_H)$ is a convex geometry. But this contradicts Proposition~\ref{104}, since $H \cong K_2$. 
Now, if $G$ is connected, then we necessarily have $G = K_1$. If $G$ is disconnected, since $(G,n)$ is a convex geometry, by Theorem~\ref{103}, we deduce $G=2K_1$. 
\end{proof}
The following theorem  gives strong limitations for the graphs which are $n$-convex geometries.
As usual, for a graph $G$, we denote by $\delta(G)$ the minimum degree of a vertex of $G.$
\begin{theorem}
\label{333}
 Let $G=(V,E)$ be a graph which is an $n$-convex geometry. Then the following facts hold:
 \begin{enumerate}[label=$(\arabic*)$, ref=\thetheorem (\arabic*)]
     \item \label{333first} $|V| \leq 2\, \delta(G) +2$. 
     \item \label{333second} If $|V| \geq 3$, then $G$ is connected and $2 \leq \mbox{\upshape{diam}}(G) \leq 3$.
 \end{enumerate}    
\end{theorem}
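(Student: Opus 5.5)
The plan is to prove (1) by a chain argument that plays the hat operator against the order-reversing map $K\mapsto N[K]$. Then (2) follows quickly from earlier results.

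\emph{Proof of (1).} Choose $x\in V$ with $\deg x=\delta(G)$, so $|N[x]|=\delta(G)+1$. By Proposition~\ref{hathull}, $[\{x\}]_n=\hat x$. By Proposition~\ref{101fifth}, $\hat x\subseteq N[x]$, hence $|\hat x|\le \delta(G)+1$. Since $(V,n)$ is a convex geometry, Theorem~\ref{62third} applied repeatedly gives a chain of $n$-convex sets
\[ \hat x=K_0\subsetneq K_1\subsetneq\cdots\subsetneq K_m=V,\qquad |K_{i+1}|=|K_i|+1, \]
so $m=|V|-|\hat x|$. All $K_i$ are non-empty and $n$-convex, so $\widehat{K_i}=K_i$ by Proposition~\ref{hathull}. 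Consequently $N[K_i]$ determines $K_i$, because $K_i=N[N[K_i]]$. By Proposition~\ref{101first}, the sets $N[K_i]$ then form a strictly decreasing chain
\[ N[K_0]\supsetneq N[K_1]\supsetneq\cdots\supsetneq N[K_m]. \]
The first term is $N[K_0]=N[\hat x]=N[x]$ by Proposition~\ref{101third}. A strictly decreasing chain of $m+1$ subsets of a set of size $\delta(G)+1$ has $m\le\delta(G)+1$. Therefore
\[ |V|=|\hat x|+m\le 2\delta(G)+2. \]
The only delicate point is that strictness of the $N$-chain really uses injectivity of $N$ on neighbourhood-closed sets. This holds because all the $K_i$ are non-empty, so the exceptional convex set $\varnothing$, which need not lie in $\mathcal N$, never occurs in the chain.

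\emph{Proof of (2).} Assume $|V|\ge 3$. Connectedness follows at once from Theorem~\ref{103}, since $G\ne 2K_1$. By Proposition~\ref{104}, $G$ is not complete, so $\mathrm{diam}(G)\ge 2$. For the upper bound, note that $V$ is convex, so $V=[\mathrm{ex}_n(V)]_n$ forces $\mathrm{ex}_n(V)\neq\varnothing$; pick $v\in\mathrm{ex}_n(V)$. Then $K\coloneq V-\{v\}$ is a non-empty proper $n$-convex set. By Proposition~\ref{102first}, $d(a,b)\le 2$ for all $a,b\in K$. It remains to bound distances from $v$. Since $G$ is connected and $|V|\ge 2$, $v$ has a neighbour $w\in K$. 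For any $u\in K$ we get $d(v,u)\le 1+d(w,u)\le 3$. Hence $\mathrm{diam}(G)\le 3$.

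I expect the main obstacle to be the chain-length bookkeeping in (1), namely checking that the top of the $N$-chain is exactly $N[x]$ and not something larger. This is where $N[\hat x]=N[x]$ is needed, and why I start the chain at $\hat x$ rather than at an arbitrary singleton-generated convex set. Part (2) is routine.
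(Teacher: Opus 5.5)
Your proof is correct in both parts, and in both it takes a different and shorter route than the paper.

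For (1), the paper takes $v$ of minimum degree and assumes $|V|\ge 2\delta(G)+3$. It builds one-element convex extensions $N[v]\cup\{v_1,\dots,v_i\}=N[V_i]$ for $i\le\delta(G)+1$ and shows $\varnothing\neq V_i\subseteq N(v)$. It then chooses witnesses $w_i\in V_i$ not adjacent to $v_{i+1}$, shows they are distinct and avoid $V_{\delta(G)+1}$, and so forces $V_{\delta(G)+1}=\varnothing$. That is an element-by-element version of the injectivity you use abstractly. The map $K\mapsto N[K]$ is an order-reversing injection on non-empty $n$-convex sets, so a saturated chain of convex sets above $\hat x$ is mirrored by a strictly decreasing chain inside $N[x]$. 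Your formulation is cleaner and yields slightly more. Since the mirrored chain ends at $N[V]=\mathcal{S}$, you get $|V|\le|\hat x|+|N[x]|-|\mathcal{S}|$ for every vertex $x$, hence $|V|\le 2\delta(G)+2-|\mathcal{S}|$.

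For (2), the paper argues by contradiction from a geodesic of length at least $4$. It uses the copoint Lemma~\ref{202} (with $K=\varnothing$) to produce a copoint containing two vertices at distance at least $3$. You instead note that for an extreme point $v$ of $V$, the set $V-\{v\}$ is already a non-empty proper $n$-convex set (indeed a copoint attached at $v$). Proposition~\ref{102first} then bounds all distances inside it by $2$, and one edge from $v$ gives $3$. This avoids Lemma~\ref{202} entirely and is the same device the paper uses in the proof of Theorem~\ref{103}. The paper's route only gains a general-purpose copoint lemma that could be reused elsewhere.
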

\begin{proof}
\begin{enumerate}
\item[$(1)$] Let $k:=\delta(G)$ and 
 $v \in V$ be such that $k=d(v)$. We want to show $|V| \leq 2k+2.$ Assume by contradiction that $|V|\geq 2k+3$. We prove the following claim.\\
  \noindent\emph{Claim: For every $i \in [k+1]$, there exist $v_1,...,v_i\in V$ distinct such that $N[v] \cupdot \{v_1,...,v_j\}$ is $n$-convex for all $j\leq i$.}\\ 
  We argue by finite induction on the set $[k+1]$.
  Since $N[v]$ is an $n$-convex set and $|N[v]|=k+1 < 2k+3 \leq |V|$ then, by Theorem~\ref{62third}, there exists $v_1 \in V-N[v]$ such that $N[v] \cupdot \{v_1\}$ is an $n$-convex set. Thus the statement in the claim holds for $i=1.$ If $k=0$ the claim is proved. 
  Suppose now that $k\geq 1$ and, for some $i \in [k]$, the statement in the claim holds. Thus, we have found $v_1,...,v_i \in V- N[v]$ pairwise distinct such that $N[v] \cupdot \{v_1,...,v_j\}$ is an $n$-convex set for all $j\leq i$. Since $|N[v] \cupdot \{v_1,...,v_i\}|=k+1+i \leq 2k+1 < 2k+3 \leq |V|$ then, by Theorem~\ref{62third}, there exists $v_{i+1} \in V-(N[v] \cupdot \{v_1,...,v_i\})$ such that $ N[v] \cupdot \{v_1,...,v_i,v_{i+1}\}$ is an $n$-convex set. 
  Thus $ N[v] \cupdot \{v_1,...,v_j\}$ is an $n$-convex set for all $j\leq i+1$.
  This shows that the statement in the claim holds also for $i+1.$
  Hence the claim is proved.

We now exploit the claim for $i=k+1$. It assures that we have $v_1,...,v_{k+1} \in V$ pairwise distinct such that, for every $i \in [k+1]$, $N[v] \cupdot \{v_1,...,v_i\}\in \mathcal{N}$ and thus also $$|N[v] \cupdot \{v_1,...,v_i\}|=k+1+i \leq 2k+2 < 2k+3 \leq |V|.$$ As a consequence, for every $i \in [k+1]$, we have $ N_G[v] \cupdot \{v_1,...,v_i\} \subsetneq V$ and therefore there exists $\varnothing \not = V_i \subseteq V$ such that $N[V_i]=N[v] \cupdot \{v_1,...,v_i\}$. 
Since, for every $i \in [k+1]$, we have $N[v] \subseteq N[V_i]$, using Proposition~\ref{101first},~\ref{101second} and~\ref{101fifth}, we deduce that  $$V_i \subseteq \reallywidehat{V_i} \subseteq \hat{v} \subseteq N[v].$$ 
We claim that, for every $i \in [k+1]$, we have 
\begin{equation}\label{inclusion}
 V_i \subseteq N(v).   
\end{equation}
Indeed, assume by contradiction that 
there exists $i \in [k+1]$ such that $v \in V_i$. Then we have $N[V_i] \subseteq N[v]$, against $v_1 \in N[V_i]-N[v]$. 

Now, for every $i \in [k]$, we have $v_{i+1} \not \in N[V_i]$ and, so there exists $w_i \in V_i$ such that $v_{i+1} \not \in N[w_i]$. But $v_{i+1} \in N[V_j]$ for all $j \in [i+1,k+1]$ and so 
\begin{equation}\label{grande}
 w_i \not \in \bigcup_{j \in [i+1,k+1]}V_j, \quad \hbox{for all}\quad i \in [k].  
\end{equation}
In particular, we have 
\begin{equation}\label{ultima}
\{w_1,...,w_k\} \cap V_{k+1}=\varnothing.
\end{equation}
Now, by \eqref{inclusion}, we have $\{w_1,...,w_k\} \subseteq N(v)$ and, by \eqref{grande}, $w_1,...,w_k$ are pairwise distinct. Since $|N(v)|=k$, then we necessarily have $N(v)=\{w_1,...,w_k\}$. As a consequence, we have $V_{k+1} \subseteq \{w_1,...,w_k\}$ and so, by \eqref{ultima}, we obtain $V_{k+1}=\varnothing$, a contradiction.
\smallskip

\item[$(2)$] Since $|V|\geq 3$, by Theorem~\ref{103}, $G$ is connected. Moreover, by Theorem~\ref{104}, $G$ is not a complete graph and thus $k\coloneq \mbox{diam}(G) \geq 2$. Suppose, by absurd, that $\mbox{diam}(G) \geq 4$. Let $v,w \in V$ and $\gamma:v=v_0 v_1 ... v_k=w$ be a $v-w$ geodesic in $G$ such that $d(v,w)=k \geq 4$. By Lemma~\ref{202} with $K:=\varnothing$ and $X\coloneq V(\gamma)$, there exists $i \in [k]_0$ and an $n$-copoint $D$ attached at $v_i$ such that $V(\gamma)-\{v_i\} \subseteq D$.

Now if $i=0$, then we have $v_1,w \in D$ and $d(v_1,w)=k-1\geq 3$. If $i \in [k-1]$, then we have $v,w \in D$ and $d(v,w)=k \geq 4$. If $i=k$, then we have $v,v_3 \in D$ and $d(v,v_3)=3$. By Proposition~\ref{102first}, we obtain that in every case $D$ is not an $n$-convex set, a contradiction.
\end{enumerate}
\end{proof}

\begin{oss}
\label{341}{\rm
    Observe that the inequalities $(1)$ and $(2)$ in Theorem~\ref{333} are both sharp. 
    Indeed, $2K_1$ is an $n$-convex geometry and the degree of every vertex is $0$. So in $(1)$ equality holds.
    
    Consider next the graphs $P_3$ and $P_4$. They are connected graphs with at least $2$ vertices and  are $n$-convex geometries. Indeed, it is a routine exercise to check that $$n_{P_3}=\{\varnothing,\{2\},\{1,2\},\{2,3\}, [3]\}$$ and $n_{P_4}$ has been computed in Example~\ref{p4}.
    
    Now, using Theorem~\ref{62third}, it is easily checked that 
    in both cases we get an $n$-convex geometry.
   Since $\mbox{diam}(P_3)=2$ and $\mbox{diam}(P_4)=3$, we have that $P_3$ and $P_4$ realize one  of the equalities in $(2)$.}
\end{oss}

We know that in a convex geometry there exist convex sets of any size.   We can immediately characterize those of size one in an $n$-convex geometry with stars.
Later, in Corollary~\ref{Dintersect}, we will characterize the convex sets of size one in an $n$-convex geometry with no stars.

\begin{lemma}\label{conv-star}
Let $G=(V,E)$ be an $n$-convex geometry,
Then $|\mathcal{S}|\leq 1$. Moreover, if $|\mathcal{S}|=1$, the only $n$-convex set of $G$ of size one is $\mathcal{S}.$ 
\end{lemma}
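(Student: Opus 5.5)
The first claim, $|\mathcal{S}|\leq 1$, is immediate from Theorem~\ref{noclosedtwins}. Any two distinct stars $s\neq s'$ satisfy $N[s]=V=N[s']$, so they would form a pair of true twins. This is in fact the final paragraph of the proof of Theorem~\ref{noclosedtwins}, so I would simply cite it.

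For the second claim, assume $\mathcal{S}=\{s\}$. First I show that $\{s\}$ is $n$-convex. Since $G$ is an $n$-convex geometry and $V$ is $n$-convex, the sets $\mathrm{ex}_n(K)$ generate every convex set. More directly, I would use Theorem~\ref{62third}, which says that starting from $\varnothing\in n$ there is some $v\in V$ with $\{v\}\in n$. So $n$-convex sets of size one exist.

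Now let $\{v\}$ be any $n$-convex set of size one. It is non-empty, so Proposition~\ref{102second} gives $\mathcal{S}\subseteq\{v\}$. Hence $\{s\}\subseteq\{v\}$, forcing $v=s$. Therefore the only singleton $n$-convex set is $\{s\}=\mathcal{S}$, and it really is $n$-convex by the existence statement just proved.

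I do not expect any real obstacle. The only point needing care is showing that some singleton is convex at all, so that $\mathcal{S}$ is not merely the unique candidate. That is exactly what Theorem~\ref{62third} applied to $K=\varnothing\neq V$ provides. Note that $V\neq\varnothing$, and $\varnothing\in n$ by Definition~\ref{def:neighborconvexity}.
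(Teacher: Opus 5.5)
Your proof is correct. It differs from the paper's only in how two steps are justified.

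For $|\mathcal{S}|\leq 1$, you cite the true-twin argument that closes the proof of Theorem~\ref{noclosedtwins}; this is legitimate and involves no circularity. The paper instead argues directly. Since $\mathcal{S}=N[V]$ is $n$-convex, if $|\mathcal{S}|\geq 2$ the geometry hypothesis gives an extreme point $s\in\mathcal{S}$. Then $\mathcal{S}-\{s\}$ would be a non-empty $n$-convex set not containing $\mathcal{S}$, contradicting Proposition~\ref{102second}.

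For the existence of a convex singleton, you invoke Theorem~\ref{62third} at $K=\varnothing$ and then use Proposition~\ref{102second} to identify the resulting singleton with $\mathcal{S}$. That is valid. The paper simply observes that $\mathcal{S}=N[V]\in\mathcal{N}\subseteq n$, which holds in every graph and needs no geometry hypothesis. So the point you flag as the only one needing care disappears once you note that $\mathcal{S}$ is convex by definition.

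Your route is shorter because it reuses Theorem~\ref{noclosedtwins}. The paper's route is self-contained: it uses only Proposition~\ref{102second} and the existence of extreme points. It also makes clear that both claims rest on the single fact that $\mathcal{S}$ is itself a non-empty $n$-convex set contained in every non-empty $n$-convex set.
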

\begin{proof}
First note that  $\mathcal{S}=N[V]\in n.$  Assume, by contradiction, that $|\mathcal{S}|\geq 2.$
Since $G$ is an $n$-convex geometry, there exists a vertex $s\in \mathcal{S}$ which is extreme for $\mathcal{S}$. Thus $\varnothing\neq\mathcal{S}-\{s\}\in n,$ against Proposition~\ref{102second}. 

Assume now that $|\mathcal{S}|\leq 1$ and let $K\in n$, with $|K|=1.$ Then, by Proposition~\ref{102second}, we have $K\supseteq \mathcal{S}$ and thus $K=\mathcal{S}.$ Since we know that $\mathcal{S}\in n$, this shows that the only $n$-convex set of size $1$ is $\mathcal{S}$.
\end{proof}

Another restriction on the family of $n$-convex geometries is the following result.

\begin{prop}\label{famiglie}
Let $G=(V,E)$ be an $n$-convex geometry with $|V|=k\geq 2$. Then the following facts hold:
    \begin{enumerate}[label=$(\roman*)$, ref=\thetheorem $(\roman*)$]
        \item \label{famiglie first} $\frac{k(k-2)}{4}\leq |E|\leq \frac{(k-1)^2}{2}$.
        \item  \label{famiglie second} If $k\geq 5$, then $C_4$ is a subgraph of $G$. In particular, $G$ cannot be a forest.
\end{enumerate}    
\end{prop}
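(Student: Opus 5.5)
The plan is to get both bounds in $(i)$ from degree estimates, and to prove $(ii)$ by contradiction: assume $G$ has no $C_4$ subgraph and pin down its structure around a vertex of large degree.

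\emph{Lower bound in $(i)$.} Let $\delta\coloneq\delta(G)$. Theorem~\ref{333first} gives $k\le 2\delta+2$, that is, $\delta\ge (k-2)/2$. Summing degrees, $2|E|\ge k\delta\ge k(k-2)/2$, so $|E|\ge k(k-2)/4$.

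\emph{Upper bound in $(i)$.} By Lemma~\ref{conv-star} we have $|\mathcal{S}|\le 1$, so at most one vertex has degree $k-1$. Every other vertex has degree at most $k-2$. Hence $2|E|\le (k-1)+(k-1)(k-2)=(k-1)^2$.

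For $(ii)$ we need one more consequence of the convex geometry property. Since $G$ is an $n$-convex geometry and $V\neq\varnothing$, $V$ has an extreme point $v$. Thus $V-\{v\}$ is $n$-convex, and since it is a non-empty proper subset, $V-\{v\}=N[Y]$ for some $Y\neq\varnothing$. For any $y\in Y$ we get $N[y]\supseteq V-\{v\}$, so $G$ has a vertex $u$ with $d(u)\ge k-2$. Moreover, if the only vertices of degree $\ge k-2$ were a star $u$, then either $Y=\{u\}$, giving $N[Y]=V$, or $Y$ contains a non-star $y$, giving $|N[Y]|\le |N[y]|<k-1$. Both contradict $|N[Y]|=k-1$.

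\emph{Proof of $(ii)$.} Let $k\ge 5$. By Theorem~\ref{333second} $G$ is connected, and $\delta\ge (k-2)/2\ge 3/2$, so $\delta\ge 2$. Assume, for a contradiction, that $G$ contains no $C_4$ as a subgraph. For the vertex $u$ found above, each $w\in N(u)$ has at most one neighbour inside $N(u)$: if $w$ were adjacent to $a,b\in N(u)$, then $u\,a\,w\,b\,u$ would be a $C_4$. So $G[N(u)]$ is a matching.

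\emph{Case $d(u)=k-2$.} Let $z$ be the unique vertex outside $N[u]$. Then $z$ has at most one neighbour in $N(u)$, since two such neighbours $w,w'$ would give the $C_4$ $u\,w\,z\,w'\,u$. The only vertices other than $z$ are $u$ and $N(u)$, and $z$ is not adjacent to $u$. Hence $d(z)\le 1<\delta$, a contradiction.

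\emph{Case $d(u)=k-1$, so $u$ is a star.} By the observation above, some other vertex $y$ has degree $\ge k-2$. Every $w\in N(u)$ has degree at most $2$ (namely $u$ plus possibly its matching partner), so $k-2\le 2$. This gives $k\le 4$, contrary to $k\ge 5$.

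Hence $G$ contains a $C_4$ subgraph, and in particular $G$ is not a forest. The step I expect to need the most care is this one: justifying that some vertex of degree $\ge k-2$ must exist, and, in the star case, that it can be taken to be a non-star vertex. That is what forces the small-neighbourhood structure to collapse in both cases.
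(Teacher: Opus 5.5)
Your proof is correct. Part $(i)$ coincides with the paper's argument, but for part $(ii)$ you take a genuinely different route.

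The paper feeds the edge lower bound of $(i)$ into Reiman's extremal bound for $C_4$-free graphs, $|E|\le \frac{k}{4}(1+\sqrt{4k-3})$. This settles $k\ge 9$, and the cases $5\le k\le 8$ are checked by computer.

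You argue locally instead. You produce a non-star vertex $u$ with $d(u)\ge k-2$; this is $\mathcal{D}\neq\varnothing$ from Theorem~\ref{stars-extreme}, which you re-derive from an extreme point of $V$. Its unique non-neighbour $z=v(u)$ has all of its at least $\delta(G)\ge 2$ neighbours inside $N(u)$, so two of them close a $C_4$ through $u$ and $z$.

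What each route buys:
\begin{itemize}
\item Your argument is uniform for all $k\ge 5$ and needs neither an external extremal theorem nor a computer check. It also gives extra information: every quasi-star $u$ lies on a $C_4$ containing $v(u)$.
\item The paper's route uses nothing beyond the edge count in $(i)$, so for $k\ge 9$ it shows that edge density alone forces a $C_4$.
\end{itemize}

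A small simplification: your preliminary observation already gives a quasi-star. You may therefore choose $u\in\mathcal{D}$ from the outset and drop both the case $d(u)=k-1$ and the appeal to connectedness, which your argument never uses.
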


\begin{proof}  

$(i)$ By the Handshaking Lemma, we have $2|E|=\sum_{x\in V}d(x)$. Since, for every $x\in V$, by Theorem~\ref{333}, we have $d(x)\geq \delta(G)\geq \frac{k-2}{2}$, we obtain $|E|\geq \frac{k(k-2)}{4}.$

Observe now that we have at most one vertex $s$ with $d(s)=k-1$ and, for the remaining $x\in V$, we have $d(x)\leq k-2.$ Thus we get 
$$2|E|\leq d(s)+\sum_{x\in V\setminus\{s\}}d(x)=(k-1)+(k-1)(k-2)=(k-1)^2.$$
\smallskip

$(ii)$ We recall that, by Reiman's Theorem, if $$|E|>\frac{k}{4}(1+\sqrt{4k-3}),$$
then $G$ admits $C_4$ as subgraph. 
Now, by Theorem~\ref{333}, we know $|E|\geq \frac{k(k-2)}{4}$ and an easy computation assures that $ \frac{k(k-2)}{4}>\frac{k}{4}(1+\sqrt{4k-3})$ for $k\geq 9$. The small cases $5\leq k\leq 8$ are checked by computer.
\end{proof}

\subsection{Quasi-stars }\label{sec: Quasi-stars }
We introduce a special type of vertices around which we develop a large part of our paper. 

\begin{defn}\label{D}{\rm Let $G=(V,E)$ be a graph. We set
$$\mathcal{D}\coloneq\{x\in V:d(x)=|V|-2\}.$$ 
The vertices in $\mathcal{D}$ are called {\bf quasi-stars}. Note that if $|V|=1$ the set $\mathcal{D}$ is necessarily empty; if $|V|=2$, it consists of isolated vertices.\\
We also define a function $v:\mathcal{D}\rightarrow V$. If $\mathcal{D}=\varnothing$, then $v$ is the so-called empty function and it has an empty image. If $\mathcal{D}\neq \varnothing$, then $v$ is defined, for every $x\in \mathcal{D}$, by 
$$\{v(x)\}\coloneq V- N[x].$$
We denote by $\mathcal{T}$ the subset of $V$ given by the image of $\mathcal{D}$ by $v$. 
}
\end{defn}


\begin{theorem}\label{stars-extreme} Let $G=(V,E)$ be a graph with at least two vertices. Then $\mathcal{T}=\operatorname{ex}(V)$ and $|\mathcal{D}|\geq |\operatorname{ex}(V)|$. Moreover,
$V=N[\mathcal{D}]\cupdot \operatorname{ex}(V).$
Under the assumption that $G$ is true twins free,  the map $v$ considered from  $\mathcal{D}$ to $\operatorname{ex}(V)$ is a bijection. 
Finally, if $G$ is an $n$-convex geometry, then  $|\operatorname{ex}(V)|=|\mathcal{D}|\geq 2.$
\end{theorem}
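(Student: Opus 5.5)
The plan is to characterise the extreme points of $V$ directly in terms of closed neighbourhoods. Since $|V|\geq 2$, for $u\in V$ the set $V-\{u\}$ is non-empty. So $u\in\operatorname{ex}(V)$ if and only if $V-\{u\}\in\mathcal{N}$, that is, if and only if $V-\{u\}=N[Y]$ for some $Y\subseteq V$; such a $Y$ is necessarily non-empty because $N[\varnothing]=V$.

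\emph{Step 1: $\mathcal{T}=\operatorname{ex}(V)$.}
\begin{itemize}
\item If $x\in\mathcal{D}$, then $N[x]=V-\{v(x)\}$ is a common closed neighbourhood, hence $n$-convex. So $v(x)\in\operatorname{ex}(V)$.
\item Conversely, let $V-\{u\}=N[Y]$ with $Y\neq\varnothing$. Since $u\notin N[Y]=\bigcap_{y\in Y}N[y]$, some $y\in Y$ has $u\notin N[y]$.
\item On the other hand, $N[y]\supseteq N[Y]=V-\{u\}$.
\item Hence $N[y]=V-\{u\}$, so $y\in\mathcal{D}$ with $v(y)=u$.
\end{itemize}
This proves $\mathcal{T}=\operatorname{ex}(V)$, and since $\mathcal{T}$ is the image of $v$, we also get $|\mathcal{D}|\geq|\operatorname{ex}(V)|$.

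\emph{Step 2: the decomposition $V=N[\mathcal{D}]\cupdot\operatorname{ex}(V)$.}
\begin{itemize}
\item If $\mathcal{D}=\varnothing$, then $N[\mathcal{D}]=V$ and $\mathcal{T}=\varnothing$, so there is nothing to prove.
\item Otherwise $N[\mathcal{D}]=\bigcap_{x\in\mathcal{D}}(V-\{v(x)\})=V-\mathcal{T}$. By Step 1 this equals $V-\operatorname{ex}(V)$, which gives the disjoint union.
\end{itemize}

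\emph{Step 3: bijectivity in the true twins free case.} If $v(x)=v(y)$, then $N[x]=V-\{v(x)\}=N[y]$, so $x,y$ are true twins. In a true twins free graph this forces $x=y$, so $v$ is injective. Surjectivity onto $\operatorname{ex}(V)$ is Step 1.

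\emph{Step 4: the $n$-convex geometry case.} By Theorem~\ref{noclosedtwins} the graph $G$ is true twins free, so Step 3 gives $|\operatorname{ex}(V)|=|\mathcal{D}|$. It remains to show $|\operatorname{ex}(V)|\geq 2$; this is the only point needing an argument beyond unwinding definitions, and I expect it to be the main obstacle. Since $G$ is a convex geometry, $V=[\operatorname{ex}(V)]_n$.
\begin{itemize}
\item $\operatorname{ex}(V)$ cannot be empty, since $[\varnothing]_n=\varnothing\neq V$ by Proposition~\ref{hathull}(1).
\item Suppose $\operatorname{ex}(V)=\{u\}$. Then, by Proposition~\ref{hathull}(1) and Proposition~\ref{101fifth}, $V=\hat u\subseteq N[u]$, so $u\in\mathcal{S}$.
\item But $|V|\geq 2$, so Proposition~\ref{hathull}(4) gives $\operatorname{ex}(V)\cap\mathcal{S}=\varnothing$, a contradiction.
\end{itemize}
Hence $|\operatorname{ex}(V)|=|\mathcal{D}|\geq 2$.
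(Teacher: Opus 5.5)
Your proof is correct and follows essentially the same route as the paper: you identify $\operatorname{ex}(V)$ with the image of $v$ via $V-\{u\}=N[Y]$ for some $Y\neq\varnothing$, and you get the decomposition by intersecting the sets $N[x]=V-\{v(x)\}$ over $x\in\mathcal{D}$. You then obtain injectivity from true-twin-freeness (Theorem~\ref{noclosedtwins}) and exclude a single extreme point via Proposition~\ref{hathull}(4). The only differences are cosmetic: you pick $y\in Y$ with $u\notin N[y]$, where the paper picks $y\in Y-\mathcal{S}$ and counts degrees, and you use $\hat u\subseteq N[u]$, where the paper uses $N[\hat x]=N[x]$.
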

\begin{proof}
We claim that, 
$$\  (\dag)\  \hbox{for every}\  x\in \operatorname{ex}(V),\  \hbox{there exists}\  y\in \mathcal{D}\  \hbox{ such that}\  \{x\}=V- N[y].$$
Indeed, let $x\in \operatorname{ex}(V)$. Then $V-\{x\}\in n$ and $V-\{x\}\notin \{V,\varnothing\}.$ Thus there exists $Y\subseteq V$ such that $V-\{x\}=N[Y]=N[Y- \mathcal{S}]$ and, necessarily, $Y-\mathcal{S}\neq\varnothing.$ 
Let $y\in Y-\mathcal{S}$. Then $V-\{x\}=N[Y- \mathcal{S}]\subseteq N[y],$ which implies $d(y)\geq n-2.$ Now, we cannot have $d(y)=n-1$, because $y\notin \mathcal{S}.$ Thus $d(y)= n-2$,  $y\in \mathcal{D}$  and  $V-\{x\}=N[y],$ that is $\{x\}=V- N[y].$

Now, by Definition~\ref{D}, $(\dag)$ translates into $x=v(y)$. Thus $(\dag)$ assures that $\operatorname{ex}(V)\subseteq \mathcal{T}.$ 
It remains to show that $\mathcal{T}\subseteq \operatorname{ex}(V).$
Pick $x\in \mathcal{T}$. Then there exists $y\in \mathcal{D}$ such that $x=v(y)$, which implies $\{x\}=V- N[y]$. It follows that $V- \{x\}=N[y]\in n$ and thus $x\in \operatorname{ex}(V).$

Observing that the map $v:\mathcal{D}\rightarrow V$ has image $\mathcal{T}=\operatorname{ex}(V),$ it is obvious that $|\mathcal{D}|\geq |\operatorname{ex}(V)|$.\\
Finally, observe that
\begin{equation}\label{extremes}
\operatorname{ex}(V)=\bigcup_{x\in \operatorname{ex}(V)}\{x\}=\bigcup_{y\in \mathcal{D}}(V-N[y])=V-\bigcap_{y\in \mathcal{D}}N[y]=V-N[\mathcal{D}].
\end{equation}
As a consequence, we have $V=N[\mathcal{D}]\cupdot \operatorname{ex}(V).$
 Note that all the above reasoning are possible also in the case $\mathcal{D}=\varnothing$. In particular, when $\mathcal{D}=\varnothing$ we have $\operatorname{ex}(V)=\varnothing$ and the splitting of $V$ appears as the trivial one $V=V\cupdot \varnothing.$

Assume next that $G$ is true twins free.
The map $v$ thought from $\mathcal{D}$ to $\operatorname{ex}(V)$ is surely surjective. So we only need to check that it is injective. Let $x,y\in \mathcal{D} $ be such that $v(x)=v(y).$ Then $V- N[x]=V- N[y]$, that is, $N[x]=N[y]$. Thus $x$ and $y$ are closed twins, which implies $x=y.$ 

Finally, assume that $G$ is an $n$-convex geometry. Then, by Theorem~\ref{noclosedtwins}, $G$ is true twins free. Thus, by what shown before, we have $|\mathcal{D}|=|\operatorname{ex}(V)|\geq 1,$ because in a convex geometry the set of extreme vertices is never empty.
Assume, by contradiction, that $\operatorname{ex}(V)=\{x\}$, for some $x\in V.$ Then, passing to the $n$-convex hulls and using Proposition~\ref{hathull first},  we have $V=[\{x\}]_n=\hat x.$ Using Proposition~\ref{101third}, we then deduce 
$$\mathcal{S}=N[V]=N[\hat x]=N[x].$$
 Thus $x$ is a star vertex. Since $|V|\geq 2$, this goes against Proposition~\ref{hathull fourth}. 
Thus $|\operatorname{ex}(V)|\geq 2.$
\end{proof}

\begin{cor}\label{regular2}
 Let $k\geq 2$ be even and $G$ be the $(k-2)$-regular graph with $k\geq 2$ vertices. Then $n$ is the free convexity. In particular, $G$ is an $n$-convex geometry. \end{cor}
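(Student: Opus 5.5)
The graph in question is $K_k$ minus a perfect matching, with $k=2m$. Every vertex $x$ has exactly one non-neighbour $x'$, and $x\mapsto x'$ is a fixed-point-free involution pairing $V$ into $m$ pairs. So $N[x]=V-\{x'\}$ for all $x$. For $k=2$ this is $2K_1$, already handled in Theorem~\ref{103}, but the argument below covers it too.

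The plan is to show directly that every subset of $V$ is $n$-convex. The empty set and $V$ are convex by definition. For a non-empty proper subset $K\subsetneq V$, let $Y\coloneq\{x' : x\in V-K\}$, which is non-empty. Then
$$N[Y]=\bigcap_{x\in V-K}N[x']=\bigcap_{x\in V-K}\big(V-\{x''\}\big)=\bigcap_{x\in V-K}\big(V-\{x\}\big)=K,$$
using $x''=x$. Hence $K\in\mathcal{N}\subseteq n$, and $n=2^V$ is the free convexity.

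The free convexity is a convex geometry, since every $K\neq V$ can be enlarged by any single vertex (Theorem~\ref{62third}), or as already noted in the preliminaries. The only point needing care is identifying the $(k-2)$-regular graph on $k$ vertices as the complement of a perfect matching, which forces $k$ to be even. Once that is done the computation is immediate, so I expect no real obstacle.
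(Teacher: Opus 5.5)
Your proof is correct and is essentially the paper's argument: the paper uses Theorem~\ref{stars-extreme} with $\mathcal{D}=V$ and $N[\mathcal{D}]=\varnothing$ to get that every $V-\{x\}$ is $n$-convex, since it equals $N[y]$ for the unique non-neighbour $y$ of $x$. It then writes any proper subset as an intersection of such sets, which is exactly your computation of $N[Y]$ done in one step; the only difference is that you check $N[x']=V-\{x\}$ directly instead of citing the correspondence between quasi-stars and extreme points.
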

 \begin{proof}
By definition of $\mathcal{D}$, we have that $\mathcal{D}=V$ and, since $G$ has no star, we have $\varnothing=N[V]=N[\mathcal{D}]$. By Theorem~\ref{stars-extreme}, we have $V=\operatorname{ex}(V).$
 Thus, for every $x\in V,$ we have $V-\{x\}\in n.$  Let $X\subseteq V$. If $X=V$, then obviously $X\in n.$ 
  Assume next that $X\neq V$. Then $V-X=\{y_1, \dots, y_r\}$ for suitable  $y_i\in V, i\in [r], r\geq 1.$
  Since $V-\{y_i\}\in n$ for all $i\in [r],$ we deduce that
  $X=V-\{y_1, \dots, y_r\}=\bigcap_{i\in [r]}V-\{y_i\}\in n.$ Thus $n$ is the free convexity.
\end{proof}

We emphasize that, by Theorem~\ref{stars-extreme}, we have $N[\mathcal{D}]\cap \operatorname{ex}(V)=\varnothing.$
The property $\mathcal{D}\cap \operatorname{ex}(V)=\varnothing$ does not necessarily hold, even assuming that the graph is an $n$-convex geometry. For instance, let $C_4=(V,E)$ and note that, by Corollary~\ref{regular2},  $C_4$ is an $n$-convex geometry. We have $\mathcal{D}(C_4)=V=\operatorname{ex}(V).$ Note also that the property $N[\mathcal{D}] \supseteq \mathcal{D}$ does not necessarily holds. Indeed, $N[\mathcal{D}(C_4)]=\varnothing\not \supseteq \mathcal{D}(C_4).$
\begin{cor}\label{notcomp}
 The class of graphs which are $n$-convex geometries is not self-complementary.   
\end{cor}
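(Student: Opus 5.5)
We exhibit a pair of complementary graphs, one an $n$-convex geometry and the other not. The most economical witness uses graphs already treated in the paper. By Theorem~\ref{103}, $2K_1$ is an $n$-convex geometry, while its complement is $K_2$, which is not one by Proposition~\ref{104}. This alone shows that the class is not closed under complementation.

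For a connected witness, we use $C_4$ in the same way. By Corollary~\ref{regular2}, $C_4$ is an $n$-convex geometry, since it is the $2$-regular graph on $4$ vertices. Its complement is $2K_2$, which is disconnected and has $4\neq 2$ vertices, so it is not an $n$-convex geometry by Theorem~\ref{103}.

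The same idea works with $P_3$. It is an $n$-convex geometry by Remark~\ref{341}, and its complement $K_2\cupdot K_1$ is disconnected with three vertices, hence again excluded by Theorem~\ref{103}.

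No step here is a real obstacle. The only care needed is to identify the complement correctly and check that it meets the hypotheses of the cited result, for instance that $\overline{C_4}=2K_2$ is disconnected of order different from $2$. We would present the $C_4$ pair as the main example and mention $2K_1$ versus $K_2$ as the trivial one.
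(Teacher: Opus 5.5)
Your proof is correct, and your main witness is exactly the paper's argument: $C_4$ is an $n$-convex geometry by Corollary~\ref{regular2}, while its complement $2K_2$ is not, by Theorem~\ref{103}. The additional pairs $2K_1$/$K_2$ (via Proposition~\ref{104}) and $P_3$/$K_2\cupdot K_1$ (via Theorem~\ref{103}) are also valid witnesses, but the corollary does not need them.
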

\begin{proof} It follows from  
Theorem~\ref{103} and Corollary~\ref{regular2}, because 
 $C_4$ is an $n$-convex geometry while, by Theorem~\ref{103}, its complement $2K_2$ is not.   
\end{proof}

We emphasize that both cases $\mathcal{S}=\varnothing$ and $\mathcal{S}=\{s\}$ in Theorem~\ref{stars-extreme} can arise. Indeed $C_4$ and the graph obtained by $C_4$  adding a star are both $n$-convex geometries. In Section~\ref{construction}, we will see how the idea of adding a star  to an $n$-convex geometry without stars is generally fruitful.
Before, we need to establish a precious link between our neighbourhood convexity and a combinatorial device given by preorders and partial orders, naturally associated with neighbourhood convexity (see Definition~\ref{pre}). This link will shed fresh new light on the set $\mathcal{D}$ of quasi-stars in an $n$-convex geometry (see Proposition~\ref{Dposet} and Corollary~\ref{Dintersect}).

\section{The neighbourhood preorder of a graph}
\label{sec:preorder}
 In this section, we establish a link between neighbourhood convexity and lattice theory by associating our convexity with a preorder (Definition \ref{pre}).
\subsection{Preorders and convexity}
We recall some definitions and elementary properties. Those are surely well-known for posets but, since we need them in the more general framework of preorders, we give them for clarity.
Let $V$ be a set. A binary relation in $V$, that is a subset of $V^2$, 
is called a {\bf preorder} if it is reflexive and transitive; and a {\bf partial order} if it is an antisymmetric preorder. As common, we denote a preorder by $\leq$ and, when $(x,y)\in \leq$, we prefer to write 
 $x\leq y$. Two elements $x,y\in V$ such that $x\not\leq y$ and  $y\not\leq x$ are called {\bf incomparable}; otherwise they are called {\bf comparable}. Recall that in a preorder can exist $x\neq y\in V$ such that both $x\leq y$ and $y\leq x$ hold.
 When $\leq$ is a partial order, we say that $(V,\leq)$ is a {\bf poset}. A poset with no incomparable elements is called a {\bf linear order}.
 
 Let $(V,\leq)$ be a preorder.  If $X\subseteq V$, then
 considering the pairs $(x,y)\in X^2$ such that $x\leq y$ we obtain a relation on $X$, denoted still by $\leq$, which gives to $X$ the structure of 
  a preorder $(X,\leq)$ called induced by $(V,\leq)$ on $X$.

 Let $x\in V$.
 Then $x$ is called {\bf maximal} if, for every $y\in V$, $x\leq y$ implies $y\leq x$; {\bf minimal} if, for every $y\in V$, $y\leq x$ implies $x\leq y$. Given $X\subseteq V$, the set of maximal  (resp. minimal) elements with respect to the preorder induced on $X$ is denoted by $\mathrm{Max}(X)$ (resp. $\mathrm{Min}(X)$). 
  If $X$ is non-empty  and finite, then both $\mathrm{Max}(X)$ and $\mathrm{Min}(X)$ are non-empty. For the rest of poset-related concepts the reader can refer to~\cite{dp02}. 

 Let $U\subseteq V.$ Then $U$ is called an {\bf upset} if $x\in U$ and $x\leq y$ imply $y\in U$.  We denote by $\mathscr{U}(V,\leq)$ the {\bf set of the upsets} of $(V,\leq)$. Given $A\subseteq V$, the {\bf upset with base} $A$ is defined by $$\uparrow{A}\coloneq\{y\in V: x\leq y\textrm{ for some }x\in A\}.$$
 For $x\in V$, instead of $\uparrow{\{x\}}$, we  write $\uparrow{x}$.
  Note  that $\uparrow{A}=\bigcup_{x\in A}
\uparrow{x}$. Similarly one defines {\bf downsets} and related concepts.

\begin{prop}\label{preorder}
Let $(V,\leq)$ be a preorder, with $V$ finite. Then the following facts hold:
\begin{enumerate}[label=$(\roman*)$, ref=\thetheorem $(\roman*)$]
    \item \label{preorder first} The set
$\mathscr{U}(V,\leq)$ 
 is a convexity on $V$ and, for every $X\subseteq V$, $[X]_{\mathscr{U}(V,\leq)}=\uparrow X$.
 \item \label{preorder second}  $V$ is a $\mathscr{U}(V,\leq)$-convex
geometry if and only if
$(V,\leq)$ is a poset.
\end{enumerate}
\end{prop}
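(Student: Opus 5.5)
For part $(i)$ we check the axioms of a convexity directly. Both $\varnothing$ and $V$ are upsets. If $U,W$ are upsets, $x\in U\cap W$ and $x\leq y$, then $y\in U$ and $y\in W$, so $U\cap W$ is an upset. For the hull formula, we first show that $\uparrow X$ is an upset: if $y\in\uparrow X$ and $y\leq z$, pick $x\in X$ with $x\leq y$; transitivity gives $x\leq z$, so $z\in\uparrow X$. Reflexivity gives $X\subseteq\uparrow X$. Finally, any upset $U\supseteq X$ contains every $y$ with $x\leq y$ for some $x\in X$, so $\uparrow X\subseteq U$. Hence $\uparrow X$ is the minimum convex set containing $X$, that is, $[X]_{\mathscr{U}(V,\leq)}=\uparrow X$.

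For part $(ii)$ we use the anti-exchange criterion, Theorem~\ref{62second}.

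Suppose $(V,\leq)$ is a poset. Let $K$ be an upset and $v,w\in V-K$ with $v\in\uparrow(K\cup\{w\})$ and $w\in\uparrow(K\cup\{v\})$.
\begin{itemize}
\item Since $v\in\uparrow(K\cup\{w\})=\uparrow K\cup\uparrow w=K\cup\uparrow w$ and $v\notin K$, we get $w\leq v$.
\item Symmetrically, $v\leq w$.
\item Antisymmetry then gives $v=w$.
\end{itemize}
So the anti-exchange property holds, and $V$ is a convex geometry.

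Conversely, suppose $\leq$ is not antisymmetric, so there are $x\neq y$ with $x\leq y$ and $y\leq x$. We need an upset $K$ avoiding both $x$ and $y$. The natural choice is
$$K\coloneq V-{\downarrow}\{x\},\qquad {\downarrow}\{x\}=\{z: z\leq x\}.$$
\begin{itemize}
\item $K$ is an upset: the complement of a downset is an upset. Indeed, if $z\notin{\downarrow}\{x\}$ and $z\leq z'$, then $z'\leq x$ would force $z\leq x$, so $z'\notin{\downarrow}\{x\}$.
\item $x\notin K$ by reflexivity, and $y\notin K$ since $y\leq x$.
\item $y\in\uparrow(K\cup\{x\})$ because $x\leq y$, and $x\in\uparrow(K\cup\{y\})$ because $y\leq x$.
\end{itemize}
Since $x\neq y$, anti-exchange fails, and by Theorem~\ref{62} $V$ is not a convex geometry.

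Alternatively, one can take $K\coloneq\uparrow x$. This is a convex set with no extreme point, because $\uparrow x-\{z\}$ contains $x$ or $y$, hence contains $\uparrow x$, which is impossible. This argument mirrors the proof of Theorem~\ref{noclosedtwins}. The only delicate step is choosing a suitable upset $K$ in the converse direction, and either choice above works.
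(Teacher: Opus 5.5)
Your proof is correct. Part $(i)$ and the converse of $(ii)$ follow the paper's line: the paper calls $(i)$ immediate, and for the converse it also exhibits a failure of anti-exchange at a pair $x\neq y$ with $x\leq y$ and $y\leq x$. The paper, however, simply takes $K=\varnothing$, which is an upset avoiding both $x$ and $y$. So your construction $K=V-{\downarrow}\{x\}$ is valid but unnecessary, and the choice of $K$ is not really a delicate step.

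The genuine difference is in the forward direction of $(ii)$. The paper does not verify anti-exchange. It cites the classical fact that the downsets of a finite poset form a convex geometry (Edelman--Jamison, Example II) and passes to the dual poset. You instead check anti-exchange directly, using $\uparrow(K\cup\{w\})=K\cup\uparrow w$ for an upset $K$ and then antisymmetry. Your version is self-contained and shows exactly where antisymmetry is used, at the cost of a few lines. The paper's version is shorter but relies on the reference.

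Your alternative argument, that $\uparrow x$ is a non-empty convex set with no extreme points, is also correct. As you note, it parallels the proof of Theorem~\ref{noclosedtwins}.
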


\begin{proof} 
$(i)$
This is immediate.
\smallskip

\item$(ii)$ 
 It is well-known that the downsets of  a poset form a convex geometry (\cite[Example II]{Jamison}). Since the upsets of $(V,\leq)$ are just the downsets of its dual poset, we also have that $\mathscr{U}$ is a convex geometry.

Conversely, suppose that $\mathscr{U}\coloneq \mathscr{U}(V,\leq)$ is a convex geometry. Assume, by contradiction, that there exist $x\neq y\in V$ such that 
$x\leq y$ and $y\leq x$. Take the convex set $U=\varnothing$. Then, by $(i),$ we have both \(y \in \uparrow x= [\{x\}]_{\mathscr{U} }\) and 
\(x \in \uparrow y= [\{y\}]_{\mathscr{U} }\) which  violates anti-exchange. Hence $\leq$ is antisymmetric and $(V,\leq)$ is a poset.
\end{proof}

\begin{prop}\label{estremi-preord}
Let $(V,\leq)$ be a preorder and $\mathscr C$ a convexity on $V$ such that $\mathscr C\subseteq \mathscr{U}(V,\leq)$. Then, for every $K\in\mathscr C$,
\[
\operatorname{\operatorname{ex}}_{\mathscr C}(K)\subseteq\operatorname{Min}(K).
\]
\end{prop}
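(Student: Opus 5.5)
The plan is a direct argument by contradiction on the definitions of extreme point and of minimal element in the preorder. Fix $K\in\mathscr C$ and $x\in\operatorname{ex}_{\mathscr C}(K)$. By definition of extreme point, $K-\{x\}\in\mathscr C$. The hypothesis $\mathscr C\subseteq\mathscr U(V,\leq)$ then gives that $K-\{x\}$ is an upset of $(V,\leq)$. We want to show that $x\in\operatorname{Min}(K)$, that is: for every $y\in K$ with $y\leq x$, also $x\leq y$.

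So we take $y\in K$ with $y\leq x$ and split into two cases.
\begin{enumerate}
\item If $y=x$, then $x\leq y$ holds by reflexivity.
\item If $y\neq x$, then $y\in K-\{x\}$. Since $K-\{x\}$ is an upset and $y\leq x$, we get $x\in K-\{x\}$, which is absurd.
\end{enumerate}
The second case therefore never occurs, and the only $y\in K$ below $x$ is $x$ itself. This is in fact stronger than minimality in the preorder sense. In particular, $x\in\operatorname{Min}(K)$, which proves $\operatorname{ex}_{\mathscr C}(K)\subseteq\operatorname{Min}(K)$.

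There is no real obstacle here. The only point needing care is that in a preorder, minimality is defined as ``$y\leq x$ implies $x\leq y$'' rather than ``$y\leq x$ implies $y=x$''. The argument above delivers the stronger conclusion, so it covers the preorder notion. Note also that we only use the upset property of $K-\{x\}$, not that of $K$.
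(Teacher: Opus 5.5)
Your proof is correct and is essentially the paper's argument. Both use only that $K-\{x\}$ is an upset, and both derive the absurd $x\in K-\{x\}$ from any $y\in K$ with $y\neq x$ and $y\leq x$. The paper frames this as a contradiction with a non-minimal $x$, while your case split makes explicit the slightly stronger fact that no $y\neq x$ in $K$ lies below $x$.
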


\begin{proof}
Let $x\in\operatorname{\operatorname{ex}}_{\mathscr C}(K)$. By definition,
$K\setminus\{x\}\in\mathscr C$. Hence, by hypothesis,
$K\setminus\{x\}$ is an upset of $(V,\leq)$.
Suppose, by contradiction, that $x\notin\operatorname{Min}(K)$. Then there exists
$y\in K$ such that $y\leq x$ and $x\not\leq y.$
Since $y\neq x$, we have $y\in K\setminus\{x\}$. As
$K\setminus\{x\}$ is an upset and $y\leq x$, it follows that
$x\in K\setminus\{x\}$, a contradiction.
\end{proof}

\subsection{The neighbourhood preorder}\label{sec:pre}
\begin{defn}\label{pre}{\rm 
Let $G=(V,E)$ be a graph.  The {\bf neighbourhood preorder} associated with $G$ is  $P(G)=(V, \leq)$ defined by setting, for  $x,y\in V,\  x\leq y$ if  $N[x]\subseteq N[y]$.}
 \end{defn}
 Note that, since any graph with at least two vertices admits two vertices of the same degree, $P(G)$ is a linear order if and only if $|V|=1.$
 
 If $G$ is true twins free, then the  neighbourhood preorder $P(G)$ is a partial order called  the {\bf neighbourhood poset} associated with $G$. Remarkably, by Theorem~\ref{noclosedtwins}, we know that this happens when $(G,n)$ is a convex geometry.

\begin{lemma}\label{upset1}
Let $G=(V,E)$ be a graph, $P(G)$ be the associated neighbourhood
 preorder, $X\subseteq V$ and $x\in V$. Then:
\begin{enumerate}[label=$(\roman*)$, ref=\thetheorem $(\roman*)$]
\item \label{upset1 first} every non-empty chain in the poset $P(G)$ is a clique in $G$;
    \item \label{upset1 second} $N[\uparrow{X}]=N[X]$ and $\widehat{\uparrow{X}}=\widehat{X}$;
    \item \label{upset1 third} $\uparrow{x}$ is an 
    $n$-convex clique;  $\uparrow{x}=\hat x$; if $x\in \mathrm{Max}(V)$, then $\{x\}\in n$;
   \item \label{upset1 fourth} every $n$-convex set of $G$ is an upset of  $P(G);$ 
   \item \label{upset1 fifth} for every $K\in n$, $\operatorname{\operatorname{ex}}_{n}(K)\subseteq\operatorname{Min}(K).$
\end{enumerate}
\end{lemma}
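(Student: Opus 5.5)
We prove the five items in order; each reduces to the definition $x\leq y$ iff $N[x]\subseteq N[y]$ and to Proposition~\ref{101}. Since (i) speaks of chains in ``the poset $P(G)$'', we read it, as literally needed, for a non-empty set of pairwise comparable elements of the preorder.

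For (i), take a non-empty chain $C$ and $x,y\in C$. Without loss of generality $x\leq y$. Then $x\in N[x]\subseteq N[y]$, so $x$ and $y$ are adjacent or equal. Hence $C$ is a clique.

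For (ii), we have $X\subseteq\uparrow X$, so Proposition~\ref{101first} gives $N[\uparrow X]\subseteq N[X]$. For the reverse inclusion, let $z\in N[X]$ and $y\in\uparrow X$. Choose $x\in X$ with $x\leq y$. Since $z\in N[x]$, the symmetry of closed neighbourhoods gives $x\in N[z]$. We need $z\in N[y]$, i.e.\ $y\in N[z]$, which does not follow directly, so we argue differently. Because $z\in N[x]\subseteq N[y]$, we get $z\in N[y]$. Thus $N[X]\subseteq N[y]$ for all $y\in\uparrow X$, and so $N[X]\subseteq N[\uparrow X]$. The empty case is trivial, since $\uparrow\varnothing=\varnothing$. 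Applying $N[\cdot]$ once more gives $\widehat{\uparrow X}=\hat X$.

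For (iii), Proposition~\ref{101ninth} gives $\hat x=\{y: N[y]\supseteq N[x]\}$, and this set is exactly $\uparrow x$. So $\uparrow x$ is $n$-convex, being a hat. It is also a clique: for $y\in\uparrow x$ we have $x\in N[x]\subseteq N[y]$, so every element is adjacent to $x$. But two elements of $\uparrow x$ need not be comparable, so (i) does not apply. Instead, for $y,y'\in\uparrow x$, note $y'\in N[y']$, and we need $y'\in N[y]$. Here we use $y\in\hat x\subseteq N[x]$ by Proposition~\ref{101fifth}. Adjacency of $y$ and $y'$ is the delicate point, and it is the main obstacle. Our plan is to look for a direct argument and, failing that, to test small examples for a counterexample before weakening the claim. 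For the last part of (iii), if $x\in\mathrm{Max}(V)$ then $\uparrow x$ consists of $x$ and its true twins, so $\{x\}\in n$ only if $x$ has no true twins. We will handle this through the hat description, expecting to need twin-freeness, and check whether the statement implicitly assumes it.

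For (iv), let $K\in n$. If $K=\varnothing$ it is trivially an upset. Otherwise $K=\hat K$ by Proposition~\ref{hathull}, and $x\in K$ gives $\uparrow x=\hat x\subseteq\hat K=K$ by (iii) and isotonicity. Finally, (v) follows from (iv) together with Proposition~\ref{estremi-preord}, since (iv) says $n\subseteq\mathscr{U}(V,\leq)$.
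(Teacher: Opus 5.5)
Your items (i), (ii), (iv) and (v) are fine. Your derivation of $\uparrow x=\hat x$ straight from Proposition~\ref{101ninth} with $A=\{x\}$ is cleaner than the paper's route, which checks $\widehat{\uparrow x}=\uparrow x$ by hand and then invokes (ii).

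The gap is in (iii), where you leave two claims open.

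\textbf{The clique property.} This is not an obstacle, and no counterexample search is needed. Let $y,y'\in\uparrow x$.
\begin{itemize}
\item From $x\le y'$ we get $x\in N[x]\subseteq N[y']$, that is, $y'\in N[x]$.
\item From $x\le y$ we get $N[x]\subseteq N[y]$.
\end{itemize}
Hence $y'\in N[y]$. You had both ingredients ($y'\in\hat x\subseteq N[x]$ and $N[x]\subseteq N[y]$) and only needed to chain them.

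Your refusal to route this through (i) is justified. The paper argues that $\uparrow x$ is a chain, which is false in general. In the bull graph (a triangle $x,a,b$ with one pendant vertex at $a$ and another at $b$) one has $\uparrow x=\{x,a,b\}$, with $a$ and $b$ incomparable. So the direct argument above is the correct one.

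\textbf{The clause on maximal elements.} Your diagnosis is right, but you must draw the conclusion rather than leave it as something to check.
\begin{itemize}
\item In a genuine preorder, a maximal $x$ satisfies $\uparrow x=[x]_N$. So $\{x\}\in n$ if and only if $x$ has no true twin.
\item The clause as literally stated is therefore false. In $K_2$ both vertices are maximal, yet $n=\{\varnothing,V\}$.
\item The paper's proof just asserts $\uparrow x=\{x\}$ for $x\in\mathrm{Max}(V)$, which uses antisymmetry.
\end{itemize}
So the clause must be read with $P(G)$ a poset, i.e.\ $G$ true twins free. This matches the phrase ``the poset $P(G)$'' in (i) and the setting where the clause is used (Corollary~\ref{Dintersect}): $n$-convex geometries, which are true twins free by Theorem~\ref{noclosedtwins}. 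Under that hypothesis the clause follows at once from what you already proved: $\{x\}=\uparrow x=\hat x\in n$.
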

\begin{proof}

$(i)$ Let $\varnothing\neq C\subseteq V$ be a chain. Pick $x\neq y\in C.$ Since $x$ and $y$ are comparable, up to renaming, we have $x\leq y$ and hence $x\in N[x]\subseteq N[y]$. Thus $x$ and $y$ are adjacent in $G$.
\smallskip

 $(ii)$ We immediately have, 
 \begin{equation}\label{facilissima}N[\uparrow{x}]=\bigcap_{x\leq y}N[y]=\bigcap_{N[x]\subseteq N[y]}N[y]=N[x].
\end{equation} 

Consider now a generic subset $X$ of $V$. Then, using \eqref{facilissima}, we have $$N[\uparrow{X}]=N[\bigcup_{x\in X}\uparrow{x}]=\bigcap_{x\in X}N[\uparrow{x}]=\bigcap_{x\in X}N[x]=N[X].$$
As a consequence, we also immediately have
$$\widehat{\uparrow{X}}=\hat X.$$

$(iii)$ The fact that $\uparrow{x}$ is a clique comes from $(i)$ because $\uparrow{x}$ is a chain in $P(G).$
We now show that $\widehat{\uparrow{x}}=\uparrow{x}$. Clearly, we have $\widehat{\uparrow{x}}\supseteq\uparrow{x}$. We show the other inclusion. Let $y\in\widehat{\uparrow{x}}$. 
 We claim that $N[x]\subseteq N[y]$. 
Indeed,  $z\in N[x]$ implies,  by $(ii)$,    $y\in\cap_{a\in N[x]}N[a]\subseteq N[z]$ and hence $z\in N[y]$. Thus $x\leq y$ and so $y\in\uparrow{x}$.
Now, by $(ii)$, we also have $\hat x=\widehat{\uparrow{x}}=\uparrow{x}.$ Finally, observe that when $x\in \mathrm{Max}(V),$ then $\uparrow{x}=\{x\}.$ 
\smallskip

$(iv)$ Let $X$ be $n$-convex, $x\in X$ and $x\leq y$. We want to show that $y\in X.$ 
By  $x\leq y$, we know that $N[y]\supseteq N[x].$ On the other hand we trivially  have $N[x]\supseteq N[X].$
Thus we obtain $N[y]\supseteq N[X],$ which
by Proposition~\ref{101ninth},  gives $y\in \hat X=X$ as required.
\smallskip

$(v)$ Let $K\in n$. By $(iv)$, $K$ is an upset of $P(G).$ Thus, by Proposition~\ref{estremi-preord}, we deduce $\operatorname{\operatorname{ex}}_{n}(K)\subseteq\operatorname{Min}(K)$.
\end{proof}
We comment the previous result. As a first fact, observe that it is false that in every graph every upset is $n$-convex.
Indeed, consider the graph $G$ having vertex set $V\coloneq [3]_0$ and edge set $E\coloneq \{\{0,i\}:i\in[3]\}$. Note that $G\cong K_{1,3}$ is not an $n$-convex geometry. Clearly $P(G)$ is a poset in which the only inequalities are given by  $i\leq 0 $, with $i\in [3].$ Consider $U\coloneq \{0, 1,2\}$. Then $U=\uparrow{\{1,2\}}$ is an upset. However, $U$ is not $n$-convex  because $\hat{U}=N[N[U]]=N[0]=V\neq U.$


As a second remark, note that several examples show that  the clique $\uparrow{x}$ is not necessarily maximal even under the assumption that the graph is an $n$-convex geometry. See, for instance, $\uparrow{0}$ in Figure~\ref{fig:example017}. 
Since we know that every maximal clique is $n$-convex, Lemma~\ref{upset1} adds new information about some further cliques which turn out to be $n$-convex.
\begin{figure}[tbp]
    \centering \includegraphics[width=0.45\linewidth]{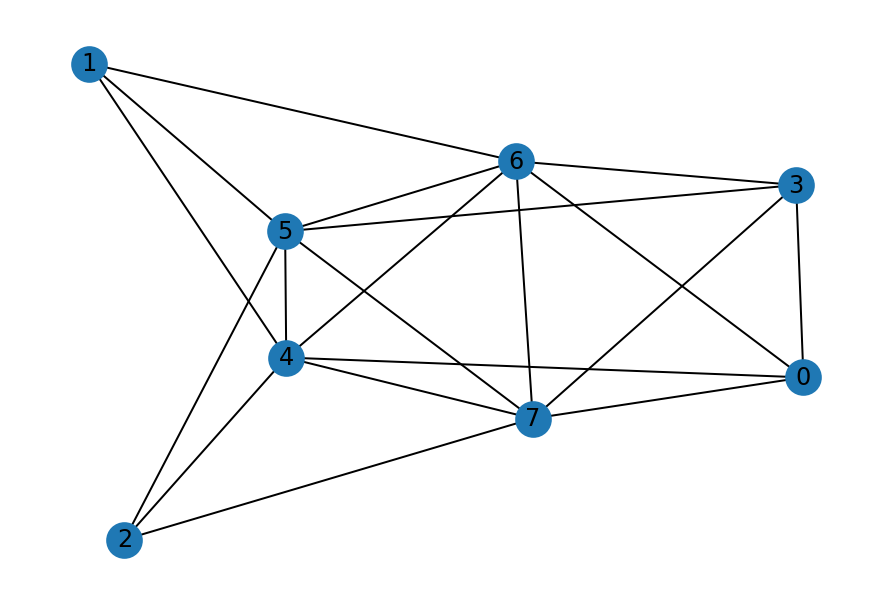} \qquad  \includegraphics[width=0.45\linewidth]{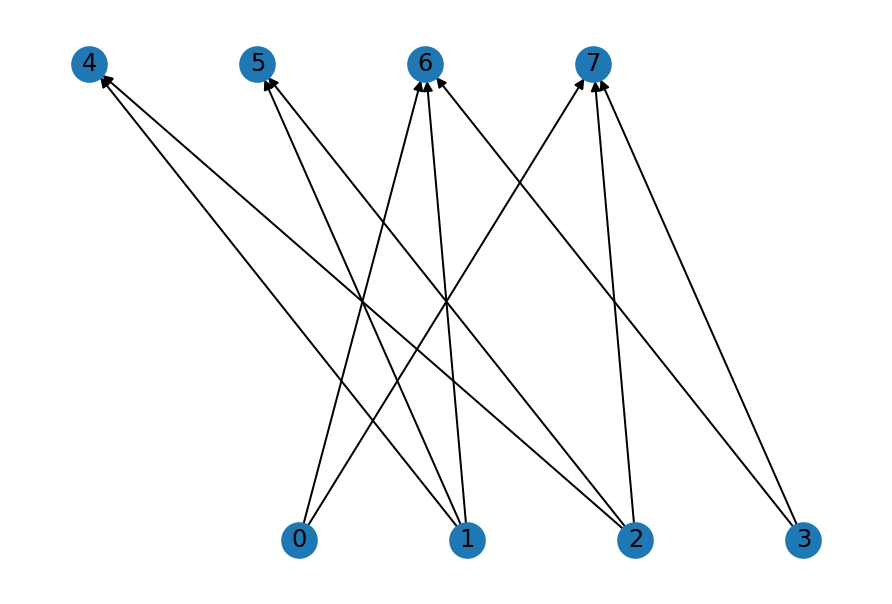}
    \caption{The clique $\uparrow{0}$ is not maximal.}
\label{fig:example017}
\end{figure}

As a third comment, by Lemma~\ref{upset1 third}, we immediately deduce that when $(G,n)$ is a convex geometry, our neighbourhood order relation coincides with the dual of the $\varnothing$-factor relation considered in \cite[p. 250]{Jamison}. This suggests a closer connection with the lattice-theoretic approach to convex geometries, which could be investigated in the future.

Finally, consider for a graph $G=(V, E)$, the {\bf vicinal preorder} $V\! P(G)=(V,\preceq) $ , considered in \cite{Foldes},  and defined  by setting, for every $x,y\in V$,
$x\preceq y$ if $N(x)\subseteq N[y]$.
Surely $x\leq y$ implies $x\preceq y$, and thus $P(G)$ refines $V\!P(G)$. As a consequence $\mathscr{U}(V\!P(G))\subseteq \mathscr{U}(P(G)).$
However, even in the restrictive context of $n$-convex geometries, in Lemma \ref{upset1 fourth} one cannot substitute $P(G)$ with $V\!P(G)$. In other words, it is false that every $n$-convex set is a $V\!P(G)$-upset. Consider, for instance $G=P_4.$ Then $\{1,2\}=N[1]\in n$ but it is not a $V\!P(G)$-upset because $1\in N[1], $ $1\preceq 3$ and $3\notin N[1].$ This easy example also shows that, in general, the neighbourhood preorder and the vicinal preorder do not coincide.
\subsection{Convex geometries and neighbourhood poset}\label{convex_geometries_and_poset}
\begin{prop}\label{Dposet} Let $G=(V,E)$ be an $n$-convex geometry with no star. Then $\mathcal{D}=\mathrm{Max}(V)$.
\end{prop}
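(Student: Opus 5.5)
We prove the two inclusions separately. Throughout, $G$ is an $n$-convex geometry, so by Theorem~\ref{noclosedtwins} $P(G)$ is a poset. Since $G$ has no star, $\mathcal{S}=\varnothing$.

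\emph{$\mathcal{D}\subseteq\mathrm{Max}(V)$.} Let $x\in\mathcal{D}$, so $N[x]=V-\{v(x)\}$. Suppose $x\leq y$ with $y\neq x$. Then $N[y]\supseteq N[x]=V-\{v(x)\}$. Since $y$ is not a star, $N[y]\neq V$, hence $N[y]=N[x]$. So $x$ and $y$ are true twins, contradicting Theorem~\ref{noclosedtwins}. Therefore $x$ is maximal. This direction is routine. It uses only the absence of stars and of true twins; note that when $|V|=2$ the vertices are isolated and the argument still applies.

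\emph{$\mathrm{Max}(V)\subseteq\mathcal{D}$.} This is the main step. Let $x\in\mathrm{Max}(V)$. By Lemma~\ref{upset1 third}, $\{x\}=\uparrow x=\hat x$ is $n$-convex. Since $G$ is a convex geometry, Theorem~\ref{62third} lets us enlarge $\{x\}$ one vertex at a time to a maximal chain of convex sets
\[
\{x\}=K_1\subsetneq K_2\subsetneq\cdots\subsetneq K_{|V|}=V,
\]
with $|K_i|=i$. The key object is $K_{|V|-1}=V-\{w\}$, which is $n$-convex. By the argument in $(\dag)$ of Theorem~\ref{stars-extreme}, $V-\{w\}=N[y]$ for some $y\in\mathcal{D}$. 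This alone does not give $x\in\mathcal{D}$, so we need a better tool.

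We would instead use copoints. Because $\{x\}=\hat x=N[N[x]]$, for each $z\neq x$ there is some $a\in N[x]$ with $z\notin N[a]$. We want to show $\bigcup_{a\in N[x]}(V-N[a])$ has size $|V|-1$, and we would aim to show that $N[x]$ already contains a single $a$ with $V-N[a]$ small.

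A cleaner route is via the anti-exchange property and Lemma~\ref{202}. Take $X=V$ and $K=\{x\}$. Lemma~\ref{202} gives $u\in V$ and a copoint $D$ attached at $u$ with $V-\{u\}\subseteq D$, so $D=V-\{u\}$ and $x\in D$. By Theorem~\ref{stars-extreme}, $D=N[y]$ with $y\in\mathcal{D}$. Hence $x\in N[y]$, and this holds for every extreme $u$ that is not $x$ itself.

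Now assume $x\notin\mathcal{D}$. By Theorem~\ref{stars-extreme}, $x\notin\operatorname{ex}(V)$ would force $x\in N[\mathcal{D}]$. Then $N[x]\supseteq$ \dots. We want to derive $x\leq y$ for some $y\in\mathcal{D}$ with $y\neq x$, contradicting maximality. Concretely, we would try to show that $N[x]\subseteq N[y]=V-\{v(y)\}$ for a suitable $y$, that is, $v(y)\notin N[x]$. The plan is to pick $w\notin N[x]$, which exists because $x$ is not a star. We then find $y\in\mathcal{D}$ with $v(y)=w$, which requires $w\in\operatorname{ex}(V)$. So the crux is showing that $V-N[x]\subseteq\operatorname{ex}(V)$ contains an extreme point. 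We would try to prove this by considering the convex set $N[x]$. By the geometry property, its complement can be peeled off one vertex at a time, so some $w\notin N[x]$ has $V-\{w\}\supseteq N[x]$ convex, i.e.\ $w\in\operatorname{ex}(V)$. This follows by applying Theorem~\ref{62third} repeatedly starting from $N[x]$ up to a convex set of size $|V|-1$ containing $N[x]$. That set is $N[y]$ with $y\in\mathcal{D}$ and $N[y]\supseteq N[x]$, so $x\leq y$. Maximality and antisymmetry then give $x=y\in\mathcal{D}$.

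This last argument actually settles the inclusion directly, and it is what we would write. The obstacle is only the degenerate case $N[x]=V$, which is excluded since $G$ has no star.
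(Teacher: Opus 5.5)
Your final argument is correct and essentially the paper's proof: $\mathcal{D}\subseteq\mathrm{Max}(V)$ uses absence of stars plus true-twin-freeness, and the reverse inclusion enlarges the convex set $N[x]$ via Theorem~\ref{62third} to produce a vertex whose closed neighbourhood contains $N[x]$. The only difference is that the paper extends by a single vertex and argues by contradiction (using $|N[x]|\le |V|-2$ so that the set $Y$ with $N[x]\cupdot\{y\}=N[Y]$ is non-empty), whereas you iterate up to size $|V|-1$, quote $(\dag)$ from Theorem~\ref{stars-extreme}, and conclude by maximality and antisymmetry; the exploratory detours before your last paragraph should simply be deleted.
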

\begin{proof} Let $k\coloneq |V|.$ Then $k\geq 2.$ To start with, recall that, since $G$ is an $n$-convex geometry, $P(G)=(V,\leq)$ is a poset.

We show $\mathcal{D}=\mathrm{Max}(V)$. We first show $\mathcal{D}\subseteq \mathrm{Max}(V)$.
Let $x\in \mathcal{D}$ and let $y\in V$ be such that $x\leq y,$ that is, $N[x]\subseteq N[y]$.
We want to show that $y=x.$ Assume, by contradiction, that $y\neq x.$ Then, by antisymmetry, 
$N[x]\subsetneq N[y]$. As a consequence $d(y)\geq d(x)+1=k-1$, which implies the absurd $y\in \mathcal{S}=\varnothing.$ 
We next show that $\mathrm{Max}(V)\subseteq \mathcal{D}$.
Let $x\in \mathrm{Max}(V)$ and assume, by contradiction, that $x\notin \mathcal{D}$. Then $|N[x]|\leq k-2$ and, since $N[x]$ is $n$-convex, there exists $y\in V$ such that $ N[x]\cupdot \{y\}\in n.$ As a consequence, there exists $Y\subseteq V,$ such that $N[x]\cupdot \{y\}=N[Y]$. Such a $Y$ is nonempty because $|N[x]\cupdot \{y\}|\leq k-1$ while $N[\varnothing]=V.$ Pick an element $z\in Y$. Then we have $N[z]\supseteq N[Y]=N[x]\cupdot \{y\}\supsetneq N[x]$, which implies $N[z]\supsetneq N[x]$, against the maximality of $x.$
\end{proof}

\begin{corollary}\label{Dintersect} Let $G=(V,E)$ be an $n$-convex geometry with no star. Then, for  every $K\in n\setminus\{\varnothing\},$ we have  $K\cap\mathcal{D}\neq \varnothing.$ In particular, $\{x\}\subseteq V$ is $n$-convex if and only if $x\in \mathcal{D}.$
\end{corollary}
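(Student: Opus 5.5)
The plan is to combine Lemma~\ref{upset1 fourth} with Proposition~\ref{Dposet}. Fix $K\in n\setminus\{\varnothing\}$. By Lemma~\ref{upset1 fourth}, $K$ is an upset of $P(G)$. Since $G$ is an $n$-convex geometry, Theorem~\ref{noclosedtwins} makes $P(G)$ a poset, so each element of $K$ lies below some maximal element of $V$.

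To see this concretely, pick $x\in K$ and climb a chain $x=x_0<x_1<\dots$ in $P(G)$. Finiteness of $V$ stops the chain at some $m\in\mathrm{Max}(V)$ with $x\leq m$. Because $K$ is an upset, $m\in K$. Proposition~\ref{Dposet}, which uses the hypothesis that there is no star, gives $\mathrm{Max}(V)=\mathcal{D}$. Hence $m\in K\cap\mathcal{D}$, and this intersection is non-empty.

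For the characterisation of singletons, there are two directions.
\begin{itemize}
\item If $\{x\}$ is $n$-convex, the first part gives $\{x\}\cap\mathcal{D}\neq\varnothing$, so $x\in\mathcal{D}$.
\item If $x\in\mathcal{D}$, then $x\in\mathrm{Max}(V)$ by Proposition~\ref{Dposet}. Lemma~\ref{upset1 third} then gives $\{x\}=\uparrow x=\hat x\in n$.
\end{itemize}

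No step is a real obstacle. The only points needing care are, first, that maximal elements above $x$ exist, which follows from finiteness together with antisymmetry. Second, the no-star hypothesis is genuinely needed through Proposition~\ref{Dposet}. With a star $s$, one would instead have $\mathrm{Max}(V)=\{s\}$.
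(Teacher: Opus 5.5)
Your proof is correct and follows the paper's argument. You show that $K$ is an upset of the poset $P(G)$ (Lemma~\ref{upset1 fourth}), hence contains an element of $\mathrm{Max}(V)=\mathcal{D}$ (Proposition~\ref{Dposet}), and you handle singletons via the first part together with Lemma~\ref{upset1 third}; your chain-climbing step just makes explicit what the paper states as ``its maximal elements belong to $\mathrm{Max}(V)$''.
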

\begin{proof}
 Let $K\in n\setminus\{\varnothing\}.$  By Lemma~\ref{upset1 third}, $K$ is an upset and hence its maximal elements belong to $\mathrm{Max}(V)$. By Proposition~\ref{Dposet},
 $\mathrm{Max}(V)=\mathcal{D}$ and thus  $K\cap\mathcal{D}\neq \varnothing.$
Now, assume that for a certain $x\in V,\{x\}\in n.$ Then $\{x\}\cap \mathcal{D}\neq \varnothing$ implies $x\in \mathcal{D}.$ Conversely, if $x\in \mathcal{D}=\mathrm{Max}(V)$,  by Lemma~\ref{upset1 third}, we deduce $\{x\}\in n.$ 
\end{proof}
\begin{prop}\label{free} Let $G=(V,E)$ be a graph with $|V|=k$. Then $n$ is the free convexity if and only if $G= K_1$, or $k\geq 2$ and $G$ is $(k-2)$-regular.
\end{prop}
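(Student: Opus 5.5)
The plan is to prove the two directions separately, relying on Corollary~\ref{regular2} for sufficiency and on Theorem~\ref{stars-extreme} for necessity.

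\emph{Sufficiency.} If $G=K_1$, then $n=\{\varnothing,V\}=2^V$, so $n$ is free. If $k\geq 2$ and $G$ is $(k-2)$-regular, then Corollary~\ref{regular2} gives that $n$ is free. Strictly, that corollary is stated for $k$ even. However, a $(k-2)$-regular graph on $k$ vertices is the complement of a perfect matching, so $k$ is automatically even by the handshake lemma. The argument there also uses no parity: every vertex lies in $\mathcal{D}$, there are no stars, and so $N[\mathcal{D}]=N[V]=\varnothing$. Theorem~\ref{stars-extreme} then gives $\operatorname{ex}(V)=V$. Every proper subset is an intersection of sets of the form $V-\{y\}$, and hence lies in $n$.

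\emph{Necessity.} Assume $n=2^V$ and $k\geq 2$. Then $V-\{x\}\in n$ for every $x$, so $\operatorname{ex}(V)=V$. By Theorem~\ref{stars-extreme}, $V=N[\mathcal{D}]\cupdot\operatorname{ex}(V)$, so $N[\mathcal{D}]=\varnothing$; moreover $\mathcal{T}=\operatorname{ex}(V)=V$. Since the free convexity is a convex geometry, $G$ is true twins free by Theorem~\ref{noclosedtwins}, and the map $v:\mathcal{D}\to\operatorname{ex}(V)=V$ is a bijection. Therefore $|\mathcal{D}|=|V|$, that is $\mathcal{D}=V$, and every vertex has degree $k-2$.

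For necessity, one could alternatively apply Corollary~\ref{Dintersect}: every singleton is $n$-convex, so every vertex lies in $\mathcal{D}$. This route needs no stars, which follows from Lemma~\ref{conv-star} and Proposition~\ref{hathull fourth} when $k\geq 2$. The bijection argument above avoids that extra detour.

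There is no real obstacle here. The only point needing care is to confirm that the bijection step legitimately forces $\mathcal{D}=V$, which holds because $\mathcal{D}\subseteq V$ and the two sets have equal finite cardinality.
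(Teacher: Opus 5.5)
Your proof is correct. The sufficiency half matches the paper's, which likewise notes that $k$ must be even and then invokes Corollary~\ref{regular2}. For necessity, however, you take a different route.

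The paper splits on the star set. If $|\mathcal{S}|=1$, Lemma~\ref{conv-star} says $\mathcal{S}$ is the only convex singleton, so freeness forces $G=K_1$. If $\mathcal{S}=\varnothing$, Corollary~\ref{Dintersect} says $\{x\}$ is convex exactly when $x\in\mathcal{D}$, so $\mathcal{D}=V$. That corollary rests on Proposition~\ref{Dposet}, which identifies $\mathcal{D}$ with the maximal elements of the neighbourhood poset.

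So the paper tests freeness on singletons, while you test it on complements of singletons. From $V-\{x\}\in n$ for all $x$ you get $\operatorname{ex}(V)=V$, and Theorem~\ref{stars-extreme} then forces $\mathcal{D}=V$. Your route is shorter and avoids both the poset machinery and the case split on stars; the absence of stars when $k\geq 2$ falls out of $\mathcal{D}=V$.

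It can be trimmed further. You do not need Theorem~\ref{noclosedtwins} or the bijectivity of $v$. The facts $\mathcal{T}=\operatorname{ex}(V)$ and $|\mathcal{D}|\geq|\operatorname{ex}(V)|$ hold for every graph with at least two vertices, so surjectivity alone gives $|\mathcal{D}|\geq|V|$. The underlying fact is simply that $V-\{x\}\in n$ forces $V-\{x\}=N[y]$ for some non-star $y$, and distinct $x$ give distinct such $y$.

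The paper's version, placed right after Corollary~\ref{Dintersect}, mainly serves to illustrate those poset results. You leave the case $k=1$ of necessity implicit, but it is trivial, since the only one-vertex graph is $K_1$.
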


\begin{proof} If $G= K_1$, then $n$ is obviously the free convexity. If $k\geq 2$  and $G$ is $(k-2)$-regular, then $k$ is necessarily even and, by Theorem~\ref{103} and Corollary~\ref{regular2}, we know that $n$ is  the free convexity. Conversely, assume that $n$ is the free convexity. Then $G$ is necessarily an $n$-convex geometry and thus we have $|\mathcal{S}|\leq 1.$ Assume first that $|\mathcal{S}|= 1.$ Let $x\in V$. Then $\{x\}\in n$. But, by Lemma~\ref{conv-star}, the only $n$-convex singleton is $\mathcal{S}$. Thus $\{x\}=\mathcal{S}$ and hence $V=\mathcal{S}$, that is, $G=K_1.$
Assume next $\mathcal{S}=\varnothing$ and pick $x\in V$. Then $\{x\}\in n$. By Corollary~\ref{Dintersect}, this implies $x\in \mathcal{D}.$ Thus $V=\mathcal{D}$ and, therefore, every vertex in $V$ has degree $k-2.$
\end{proof}

The following result  shows a relationship between convex geometries of different orders.

\begin{theorem}\label{main} Let $G=(V,E)$ be a graph with $|V|\geq 3$ which is an $n$-convex geometry without stars and let $x\in \mathcal{D}$. Then the vertex deleted subgraph $G-\{x,v(x)\}$ is an $n$-convex geometry without stars.
\end{theorem}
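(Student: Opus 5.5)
The plan is to compare the $n$-convex sets of $H\coloneq G-\{x,y\}$, where $y\coloneq v(x)$, with the $n$-convex sets of $G$ lying between $\{x\}$ and $N[x]$. Set $W\coloneq V-\{x,y\}$, which is non-empty because $|V|\geq 3$. By Definition~\ref{D} we have $N_G[x]=V-\{y\}=W\cupdot\{x\}$, so $x$ is adjacent to every vertex of $W$ and not to $y$. For $Z\subseteq W$ we have $N_H[Z]=N_G[Z]\cap W$.

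\emph{Key claim.} For $\varnothing\neq K\subseteq W$,
\[
K\in n_H \iff K\cup\{x\}\in n_G .
\]
\begin{itemize}
\item[($\Rightarrow$)] If $K=W$, then $K\cup\{x\}=N_G[x]$, which is $n_G$-convex. Otherwise $K=N_H[Z]$ with $\varnothing\neq Z\subseteq W$. Then
\[
N_G[Z\cup\{x\}]=N_G[Z]\cap(V-\{y\})=N_H[Z]\cup\{x\}=K\cup\{x\},
\]
using that $x\in N_G[z]$ for every $z\in W$.
\item[($\Leftarrow$)] Write $K\cup\{x\}=N_G[Y]$. Here $Y\neq\varnothing$, since $y\notin K\cup\{x\}$.
\begin{itemize}
\item[$\circ$] $y\notin Y$, since otherwise $x\notin N_G[Y]$.
\item[$\circ$] If $Y\subseteq\{x\}$, then $K\cup\{x\}=N_G[x]$, so $K=W\in n_H$.
\item[$\circ$] Otherwise $Z\coloneq Y\cap W\neq\varnothing$. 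Then $K=N_G[Y]\cap W=N_G[Z]\cap N_G[x]\cap W=N_H[Z]$.
\end{itemize}
\end{itemize}
Hence $K\mapsto K\cup\{x\}$ is an inclusion-preserving bijection from $n_H-\{\varnothing\}$ onto $\{C\in n_G:\{x\}\subseteq C\subseteq N_G[x]\}$. The bottom of this interval is $\{x\}$, which is $n_G$-convex by Corollary~\ref{Dintersect} because $x\in\mathcal{D}$.

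\emph{Convex geometry.} I will verify Theorem~\ref{62third} for $H$. The case $K=\varnothing$ is handled by the singleton $\{w\}$ corresponding to a minimal nonempty member of the interval above $\{x\}$; this is covered by the general step below applied to $K'=\{x\}$. For $K\in n_H$ with $K\neq W$, put $K'\coloneq K\cup\{x\}$ (or $K'=\{x\}$ when $K=\varnothing$). Then $K'$ is $n_G$-convex and $K'\subsetneq C\coloneq N_G[x]$, with $C$ convex.

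The hard step is to find $w\in C-K'$ with $K'\cup\{w\}\in n_G$, staying inside $C$; Theorem~\ref{62third} applied directly in $G$ might instead add $y$. I will use the standard anti-exchange argument (Theorem~\ref{62second}).
\begin{itemize}
\item[$\circ$] Choose $w\in C-K'$ such that $[K'\cup\{w\}]_n$ is inclusion-minimal among the hulls $[K'\cup\{u\}]_n$ with $u\in C-K'$. All these hulls lie in $C$, since $C$ is convex.
\item[$\circ$] Suppose some $u\neq w$ lies in $[K'\cup\{w\}]_n-K'$. Then $u\in C-K'$, and $[K'\cup\{u\}]_n\subseteq[K'\cup\{w\}]_n$. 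By minimality the two hulls are equal, so $w\in[K'\cup\{u\}]_n$.
\item[$\circ$] Together with $u\in[K'\cup\{w\}]_n$, this contradicts anti-exchange. Hence $K'\cup\{w\}$ is $n_G$-convex.
\end{itemize}
Since $w\in C-K'\subseteq W$, the key claim gives $K\cup\{w\}\in n_H$, so $H$ satisfies Theorem~\ref{62third}.

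\emph{No stars.} Suppose $s\in W$ were a star of $H$. Then $N_G[s]\supseteq W$, and $s\sim x$, so $N_G[s]\supseteq V-\{y\}$. Because $G$ has no stars, $N_G[s]=V-\{y\}=N_G[x]$. Thus $s$ and $x$ are a pair of true twins, contradicting Theorem~\ref{noclosedtwins}. Hence $H$ is an $n$-convex geometry without stars.
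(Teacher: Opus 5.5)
Your proof is correct, but it reaches the convex-geometry property of $H$ by a different route than the paper.

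\textbf{The paper's route.} It proves the hull identity $[A]_{n_H}\cup\{x\}=[A\cup\{x\}]_{n_G}$ for every $A\subseteq W$, via $N_G[A\cup\{x\}]=N_H[A]\cup\{x\}$. It then transfers anti-exchange from $G$ to $H$ in one line. If $a\in[C\cup\{b\}]_{n_H}$, then $a\in[(C\cup\{x\})\cup\{b\}]_{n_G}$. Anti-exchange in $G$ then forbids $b\in[(C\cup\{x\})\cup\{a\}]_{n_G}\supseteq[C\cup\{a\}]_{n_H}$.

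\textbf{Your route.} You identify the convex sets themselves rather than the hulls. You show that $K\mapsto K\cup\{x\}$ is an order isomorphism of $n_H$ onto the interval of $n_G$ between $\{x\}$ and $N_G[x]$, with $\varnothing\mapsto\{x\}$. (As literally stated, your bijection starts from $n_H-\{\varnothing\}$ and so misses $\{x\}$ in the target, but you treat that case correctly right afterwards.) You then verify the one-point extension property of Theorem~\ref{62third} by a minimal-hull argument. That argument is essentially a re-proof of the implication from anti-exchange to one-point extension, localised inside the convex set $N_G[x]$.

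\textbf{What each buys.} Your version establishes a more structural fact: any interval between two nested convex sets of a convex geometry again has the one-point extension property, and $n_H$ is exactly such an interval. The paper's version is shorter, because once hulls correspond no minimality argument is needed. Your isomorphism already yields the paper's hull identity, since the $n_G$-hull of $A\cup\{x\}$ lies in the convex set $N_G[x]$ and hence in the interval. So you could also have finished by transferring anti-exchange directly.

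The no-stars part of your proof coincides with the paper's.
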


\begin{proof} Let $p\coloneq v(x),\  W\coloneq V\setminus\{x,p\}, \ H\coloneq G-\{x,v(x)\}=G[W].$ Note that $W\neq \varnothing$ because $|V|\geq 3,$ and hence $H$ is a graph.
  We show that $(H,n_H)$  is a convex geometry.
  We first claim that, for every $A\subseteq W,$ we have
  \begin{equation}\label{prima}
      N_G[A\cup\{x\}]=N_H[A]\cup\{x\}.
  \end{equation}
  To start with note that, by definition of the function $v$, we have $ N_G[x]=V\setminus \{p\}$ and thus  
\begin{equation}\label{fac}
      N_G[x]=W\cup\{x\}.
  \end{equation}
Let $A\subseteq W$. Note that $x\in N_G[A]$ because  $x\in N_G[W]$. Moreover, since $H$ is induced by $W,$ we also have $N_G[A]\cap W=N_H[A]$. As a consequence, using \eqref{fac}, we obtain 
$$N_G[A\cup\{x\}]=N_G[A]\cap N_G[x]=N_G[A]\cap(W\cup\{x\})=N_H[A]\cup\{x\}.$$
Thus \eqref{prima} is proven.
We next claim that 
\begin{equation}\label{seconda}
 [A]_{n_H}\cup\{x\}= [A\cup\{x\}]_{n_G}.  
\end{equation}
If $A=\varnothing, $ equality \eqref{seconda} asks for $\{x\}=[x]_{n_G}$ which is true in light of Corollary~\ref{Dintersect}.  Assume next that $A\neq \varnothing.$ Then, by Proposition~\ref{hathull}, \eqref{seconda} asks for
\begin{equation}
\label{secondabis}
 \hat{A}^H\cup\{x\}= \widehat{A\cup\{x\}^G}.  
\end{equation}
Now, using \eqref{prima} and \eqref{fac}, we have 
\begin{eqnarray}\label{ecco}
\nonumber\widehat{A\cup\{x\}^G}&=&N_G[N_G[A\cup\{x\}]]=N_G[N_H[A]\cup\{x\}]
=N_G[N_H[A]]\cap N_G[x]\\
&=&N_G[N_H[A]]\cap (W\cup\{x\}).
\end{eqnarray}
By the fact that $N_H[A]\subseteq W$ we have $x\in N_G[N_H[A].$ Moreover, recalling that $H$ is induced by $W$, by the fact that $N_H[A]\subseteq W$, we deduce 
$$N_G[N_H[A]]\cap W=N_H[N_H[A]]=\hat{A}^H.$$
Thus, from \eqref{ecco}, we obtain $\widehat{A\cup\{x\}^G}=\hat{A}^H\cup\{x\}$, proving \eqref{secondabis}. We are now ready to prove the anti-exchange property for $(H,n_H).$ Let $C\subsetneq W$ with $C\in n_H.$ We first claim that $C\cup\{x\}\in n_G.$ Indeed, 
using \eqref{seconda} with $A\coloneq C$, we have $ [C\cup\{x\}]_{n_G}=[C]_{n_H}\cup\{x\}=C\cup\{x\}.$
Let $a,b\in W\setminus C,$ with $a\neq b,$ and assume that $a\in [C\cup\{b\}]_{n_H}$. 
Then using \eqref{seconda}, with $A\coloneq C\cup\{b\},$ we deduce that $a\in [C\cup\{b,x\}]_{n_G}=[(C\cup\{x\})\cup\{b\}]_{n_G}.$ Note now that $a,b\in V\setminus (C\cup\{x\})$ with $C\cup\{x\}\in n_G$. By the anti-exchange property for $n_G$ applied to  $C\cup\{x\}$, we deduce that $b\notin [(C\cup\{x\})\cup\{a\}]_{n_G}=[C\cup\{a,x\}]_{n_G}.$ Since, by \eqref{seconda}, $[C\cup\{a,x\}]_{n_G}\supseteq [C\cup\{a\}]_{n_H}$ we deduce that $b\notin[C\cup\{a\}]_{n_H}$, concluding the proof that $(H,n_H)$ is a convex geometry.
We finally show that $H$ admits no star. Assume, by contradiction, that $\mathcal{S}(H)\neq\varnothing.$ Then, by Lemma~\ref{conv-star}, there exists a unique $s\in W$ such that $\{s\}=\mathcal{S}(H).$ Since $s\in W,$ we then have $N_G[s]\supseteq W\cup\{x\}$. We cannot have $N_G[s]\supsetneq W\cup\{x\}$ because this implies $N_G[s]=V$, against the fact that $G$ has no star. By \eqref{fac}, it follows that $N_G[s]= W\cup\{x\}=N_G[x]$.
Now, by Theorem~\ref{noclosedtwins}, we deduce $s=x\notin W,$ a contradiction.
\end{proof}


\section{Star-augmented graphs}\label{construction}

By attaching a universally adjacent vertex to a given graph, we define the notion of a star-augmented graph and this transformation preserves the property of being an $n$-convex geometry. 
We begin by defining the new construction.

\begin{defn}\label{added-star} {\rm Let $G=(V,E)$ be a graph and $s\notin V$. The graph $G^*\coloneq(V^*,E^*)$ is defined by $V^*\coloneq V\cup\{s\}$ and $E^*\coloneq E\cup\{\{v,s\}:  v\in V\}$. 
We call $G^*$ the {\bf star-augmented graph} of $G$. 
For the sake of simplicity, we will use the notation $N^*\coloneq N_{G^*}$, $\mathcal{N}^*\coloneq \mathcal{N}_{G^*}$, $n^*\coloneq n_{G^*}$ and $\operatorname{ex}^*(X)\coloneq \operatorname{ex}_{G^*}(X)$.}
    \end{defn}

Prior to the main result, we prove the following technical lemma.
\begin{lemma}
\label{lem:nconvexsets}
Let $G$ be a graph and  $G^*$ its star-augmented graph. Then the following facts hold:
\begin{itemize}
    \item [$(i)$] $n^*\subseteq\{\varnothing\}\cup \{\{s\}\cup Y:Y\in n\}.$
    \item [$(ii)$] If $G$ admits no star, then $$n^*=\{\varnothing\}\cup \{\{s\}\cup Y:Y\in n\}.$$
\end{itemize}
\end{lemma}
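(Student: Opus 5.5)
The plan is to compute the common closed neighbourhoods in $G^*$ directly in terms of those in $G$. For every $X\subseteq V$ the defining edges give $N^*[x]=N[x]\cup\{s\}$ for $x\in V$ and $N^*[s]=V^*$. I will first derive the formula
$$N^*[X]=N[X]\cup\{s\}\quad\text{and}\quad N^*[X\cup\{s\}]=N^*[X].$$
For $X\neq\varnothing$ the first identity follows by intersecting the sets $N[x]\cup\{s\}$. For $X=\varnothing$ both sides equal $V^*$, since $N[\varnothing]=V$. The second identity holds because $N^*[s]=V^*$, which is just the remark after Definition~\ref{def:main} applied to the star $s$.

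For $(i)$, take $K\in n^*$ with $K\neq\varnothing$. By Definition~\ref{def:neighborconvexity}, $K=N^*[Z]$ for some $Z\subseteq V^*$. Setting $X\coloneq Z\setminus\{s\}\subseteq V$, the formula gives $K=N[X]\cup\{s\}$. Since $N[X]\in\mathcal N\subseteq n$, we get $K=\{s\}\cup Y$ with $Y\coloneq N[X]\in n$, which proves the inclusion.

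For $(ii)$, we need the reverse inclusion. Clearly $\varnothing\in n^*$. Let $Y\in n$. If $Y\neq\varnothing$, then $Y=N[X]$ for some $X\subseteq V$, and so $\{s\}\cup Y=N^*[X]\in n^*$. The only delicate case, and the one where the hypothesis is used, is $Y=\varnothing$: we must show $\{s\}\in n^*$. Since $G$ has no star, $N[V]=\mathcal S=\varnothing$, and the formula yields $N^*[V]=\{s\}$, so $\{s\}\in\mathcal N^*\subseteq n^*$.

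The main point to watch is this empty-set bookkeeping. In $n$, the set $\varnothing$ is added artificially, while in $n^*$ the set $\{s\}$ must arise genuinely as some $N^*[X]$. This is exactly where the no-star hypothesis enters; without it $\{s\}$ need not be $n^*$-convex, which explains why $(i)$ is only an inclusion.
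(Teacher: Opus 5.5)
Your proof is correct and follows essentially the same route as the paper: use $N^*[x]=N[x]\cup\{s\}$ and drop $s$ from any generating set $Z$ for $(i)$; for $(ii)$, realize $\{s\}\cup N[U]$ as $N^*[U]$, and realize $\{s\}$ as the star set of $G^*$, which is where the no-star hypothesis on $G$ is used. The only cosmetic difference is that your identity $N^*[X]=N[X]\cup\{s\}$, valid also for $X=\varnothing$, absorbs the paper's separate case $Z\subseteq\{s\}$ in $(i)$.
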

\begin{proof}
   $(i)$ Let $ X\in n^*-\{\varnothing\}$. Then, there exists a subset $Z\subseteq V^*$ such that $X=N^*[Z]$. If $Z\subseteq\{s\}$, then $X=V^*=V\cup\{s\}$ and $V\in n.$ Let next $Z\not \subseteq \{s\}.$ Then
    $$X=N^*[Z]=\bigcap_{x\in Z}N^*[x]=\bigcap_{x\in Z-\{s\}}(N[x]\cup\{s\})=\{s\} \cup \bigcap_{x\in Z-\{s\}}N[x] =\{s\}\cup N[Z-\{s\}].$$
    Since $N[Z-\{s\}]\in n$, we get $X\in \{\{s\}\cup Y:Y\in n\}$. It follows that $n^*\subseteq\{\varnothing\}\cup \{\{s\}\cup Y:Y\in n\}.$
\smallskip

    $(ii)$ By $(i)$, we need only to show that $\{\varnothing\}\cup \{\{s\}\cup Y:Y\in n\}\subseteq n^*.$
    The fact that $\varnothing \in n^*$ is obvious. So, let $X=\{s\}\cup Y$ for some $Y\in n.$ If $Y=\varnothing,$ since $G$ contains no star, $s$ is the only star of $G^*$ and thus $X=\{s\}=N^*[V^*]\in n^*.$ Assume next that $Y\neq \varnothing.$ Then $Y=N[U],$ for some $U\subseteq V$, and thus $X=\{s\}\cup N[U]=N^*[U]\in n^*.$
\end{proof}


\begin{theorem}
\label{th:convexgeometry}
Let $G$ be a graph with no star
and $G^*$ its star-augmented graph. Then, $G$ is an $n$-convex geometry if and only if $G^*$ is an $n$-convex geometry.
\end{theorem}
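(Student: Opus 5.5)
The plan is to exploit Lemma~\ref{lem:nconvexsets}$(ii)$. Since $G$ has no star, it gives the explicit description $n^*=\{\varnothing\}\cup\{\{s\}\cup Y: Y\in n\}$. Thus the map $\varphi:n\to n^*\setminus\{\varnothing\}$, $Y\mapsto \{s\}\cup Y$, is an inclusion-preserving bijection (it is injective because $s\notin V$). I will then verify the equivalent condition of Theorem~\ref{62third} on both sides: every convex set other than the whole ground set admits a one-point convex extension.

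For the direction ($\Rightarrow$), assume $G$ is an $n$-convex geometry and take $K\in n^*\setminus\{V^*\}$. There are two cases.
\begin{itemize}
\item If $K=\varnothing$, then $\{s\}=\varphi(\varnothing)\in n^*$ is a one-point extension.
\item Otherwise $K=\{s\}\cup Y$ with $Y\in n$ and $Y\neq V$. By Theorem~\ref{62third} applied to $G$, there is $v\in V\setminus Y$ with $Y\cup\{v\}\in n$, so $K\cup\{v\}=\varphi(Y\cup\{v\})\in n^*$ and $v\notin K$.
\end{itemize}
Hence $G^*$ satisfies Theorem~\ref{62third}.

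For the direction ($\Leftarrow$), assume $G^*$ is an $n$-convex geometry and take $Y\in n\setminus\{V\}$. Then $\{s\}\cup Y\in n^*$ and $\{s\}\cup Y\neq V^*$. By Theorem~\ref{62third} there is $w\in V^*\setminus(\{s\}\cup Y)$, so $w\in V\setminus Y$, with $\{s\}\cup Y\cup\{w\}\in n^*$. This set is nonempty, so by the lemma it equals $\{s\}\cup Y'$ for some $Y'\in n$. Because $s\notin V$, comparing the two descriptions gives $Y'=Y\cup\{w\}$, and therefore $Y\cup\{w\}\in n$. This includes the case $Y=\varnothing$, which needs no special treatment since $\varnothing\in n$ and $\{s\}=\varphi(\varnothing)$.

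The only delicate point is the bookkeeping around the empty set, namely that $\varnothing\in n$ corresponds to $\{s\}\in n^*$ while $\varnothing\in n^*$ has no preimage. This is exactly where the hypothesis that $G$ has no star enters, through part $(ii)$ of the lemma, which guarantees $\{s\}\in n^*$. Everything else is a direct transfer of the one-point extension property along the bijection $\varphi$.
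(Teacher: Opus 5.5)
Your proof is correct, and it takes a different route from the paper's. Both start from Lemma~\ref{lem:nconvexsets}$(ii)$. The paper then works with the definition of convex geometry itself. It first proves that $\operatorname{ex}^*(Y\cup\{s\})=\operatorname{ex}(Y)$ for every non-empty $Y\in n$. It then verifies $[\operatorname{ex}^*(X)]_{n^*}=X$ (resp.\ $[\operatorname{ex}(Y)]_n=Y$) by computing the hulls explicitly as double common closed neighbourhoods in $G^*$ and $G$, with the case $X=\{s\}$ handled separately.

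You instead transport the one-point extension criterion of Theorem~\ref{62third} along the inclusion-preserving bijection $Y\mapsto\{s\}\cup Y$. After invoking the lemma you never touch hulls, extreme points or neighbourhoods. This route is shorter, and it shows the equivalence is purely order-theoretic: for any convexity $\mathscr{C}$ on $V$, the family $\{\varnothing\}\cup\{\{s\}\cup Y: Y\in\mathscr{C}\}$ is a convex geometry on $V\cup\{s\}$ exactly when $\mathscr{C}$ is. The graph enters only through the lemma, and the no-star hypothesis only through $\{s\}\in n^*$, as you correctly point out.

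The paper's computation is longer but gives extra explicit information: a non-empty convex set $Y$ of $G$ and its image $\{s\}\cup Y$ in $G^*$ have exactly the same extreme points.
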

\begin{proof} By Lemma \ref{lem:nconvexsets}, we know that $n^*=\{\varnothing\}\cup \{\{s\}\cup Y:Y\in n\}.$
We start the proof claiming that when $\varnothing\neq Y\in n$ and 
$X=Y\cup\{s\}\in n^*$, then 
\begin{equation}\label{estremi}
\operatorname{ex}^*(X)=\operatorname{ex}(Y).
\end{equation}
 Let $y\in \operatorname{ex}(Y)$. Then $Y-\{y\}\in n$ and thus $X-\{y\}= (Y-\{y\})\cup \{s\}\in n^*.$ This shows $\operatorname{ex}(Y)\subseteq \operatorname{ex}^*(X).$
Let next $y\in \operatorname{ex}^*(X)$.
 Then $\varnothing\neq X-\{y\}\in n^*$ and $y\neq s$ because $s$ belongs to all the non-empty $n^*$-convex sets contained in $X$. Thus $y\in Y$ and since $X-\{y\}=(Y\cup\{s\})-\{y\}=(Y-\{y\})\cup\{s\}\in n^*$, we deduce $y\in \operatorname{ex}(Y).$ This shows $\operatorname{ex}^*(X)\subseteq \operatorname{ex}(Y).$

 Suppose now that $G$ is an $n$-convex geometry and let $X\in n^*\setminus\{\varnothing\}$. Then $X=Y\cup\{s\}$, for some $Y\in n$. 
If $Y=\varnothing,$ then $X=\{s\}$ and $\operatorname{ex}^*(X)=\{s\}$. It follows that  
$$[\operatorname{ex}^*(X)]_{n^*}=N^*[N^*[\operatorname{ex}^*(s)]]=N^*[N^*[s]]=N^*[V^*]=\mathcal{S}(G^*)=\{s\}=X.$$
Let next $X=Y\cup\{s\}$ with $Y\neq \varnothing$.
Then, using \eqref{estremi}, we have 
$$[\operatorname{ex}^*(X)]_{n^*}=N^*[N^*[\operatorname{ex}^*(X)]]=N^*[N^*[\operatorname{ex}(Y)]]=N^*[N[\operatorname{ex}(Y)]\cup\{s\}]=$$
$$=N^*[N[\operatorname{ex}(Y)]]\cap N^*[s]=N^*[N[\operatorname{ex}(Y)]]\cap V^*=N^*[N[\operatorname{ex}(Y)]]=$$
$$=N[N[\operatorname{ex}(Y)]]\cup\{s\}=Y\cup\{s\}=X.$$
So, $G^*$ is an $n$-convex geometry. 

Conversely, suppose that $G^*$ is an $n$-convex geometry and let $Y\in n\setminus\{\varnothing\}.$ Consider  $X=Y\cup\{s\}\in n^*\setminus\{\varnothing\}$. We show that $[\operatorname{ex}(Y)]_n=Y$. By \eqref{estremi}, we have $\operatorname{ex}(Y)=\operatorname{ex}^*(X)\neq\varnothing$ and hence the $n$-convex hull of $\operatorname{ex}(Y)$ coincides with its  neighbourhood closure. Thus, recalling that $s$ is a star vertex for the $n$-convex geometry $G^*$, we have 
$$[\operatorname{ex}(Y)]_n=NN[\operatorname{ex}(Y)]=NN[\operatorname{ex}^*(X)]=N[N^*[\operatorname{ex}^*(X)]-\{s\}]=$$
$$N^*[N^*[\operatorname{ex}^*(X)]-\{s\}]-\{s\}=N^*[N^*[\operatorname{ex}^*(X)]]-\{s\}=[{ex}^*(X)]_{n^*}-\{s\}=X-\{s\}=Y.$$
So, $G$ is an $n$-convex geometry.
\end{proof}

Theorem~\ref{th:convexgeometry} allows to construct an $n$-convex geometry of any vertex size.
\begin{cor}\label{allk}

    For any integer $k\geq 1$, there exists a graph with $k$ vertices which is an $n$-convex geometry.
\end{cor}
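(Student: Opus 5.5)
The construction splits by parity: for even $k$ we exhibit a regular graph directly, and for odd $k$ we add a star to an even-order example.

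\emph{Even case.} Let $k\geq 2$ be even. We take the $(k-2)$-regular graph on $k$ vertices, namely the complement of a perfect matching on $k$ vertices (the cocktail party graph). For $k=2$ this is $2K_1$, and for $k=4$ it is $C_4$. By Corollary~\ref{regular2} (equivalently Proposition~\ref{free}), $n$ is the free convexity on this graph. The free convexity is always a convex geometry, so this settles every even $k$.

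\emph{Odd case.} For $k=1$, $K_1$ is an $n$-convex geometry by Proposition~\ref{104}. For odd $k\geq 3$, write $k=m+1$ with $m=k-1\geq 2$ even. Let $G$ be the $(m-2)$-regular graph on $m$ vertices from the even case. We need that $G$ has no star: every vertex has degree $m-2<m-1$, so no vertex is adjacent to all others, and $\mathcal{S}=\varnothing$. Since $G$ is an $n$-convex geometry with no star, Theorem~\ref{th:convexgeometry} shows that the star-augmented graph $G^*$ is an $n$-convex geometry. It has $m+1=k$ vertices.

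Every step is a direct application of earlier results. The only point requiring care is the star-free hypothesis of Theorem~\ref{th:convexgeometry}, which follows from the degree count above.
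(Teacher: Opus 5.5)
Your proposal is correct and follows the paper's proof exactly. You use the $(k-2)$-regular graph (complement of a perfect matching) via Corollary~\ref{regular2} for even $k$, $K_1$ for $k=1$, and the star-augmentation of the even-order example via Theorem~\ref{th:convexgeometry} for odd $k\geq 3$, and your explicit degree check that this regular graph has no star (a hypothesis of Theorem~\ref{th:convexgeometry} the paper leaves implicit) is a welcome addition.
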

\begin{proof}
    If $k$ is even we use the $(k-2)$-regular graph $B_k$ on $k$ vertices which is an $n$-convex geometry, by Corollary~\ref{regular2}.   If $k\geq 3$ is odd, we consider the star-augmented graph $A_k$ of $B_{k-1}$. By Theorem~\ref{th:convexgeometry}, $A_k$  is an $n$-convex geometry. If $k=1,$ we have $K_1.$
\end{proof}

\section{Characterizing the existence of stars in an $n$-convex geometry}\label{sec:stars}

 We now present the main application of the results collected in the previous sections, namely, the characterization of the $n$-convex geometries admitting a star in term of their vertex set size. 



\begin{corollary}\label{evenodd}Let $G=(V,E)$ be a graph which is an $n$-convex geometry. Then $G$ admits a star if and only if $|V|$ is odd.
\end{corollary}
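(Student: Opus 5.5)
Let $G=(V,E)$ be an $n$-convex geometry. The plan is to prove first that a star-free $n$-convex geometry has even order, by induction on $|V|$ using the reduction of Theorem~\ref{main}. The case of a star then reduces to the star-free case by deleting the star, using Theorem~\ref{th:convexgeometry}. By Lemma~\ref{conv-star} we have $|\mathcal{S}|\leq 1$, so exactly one of two cases occurs: $\mathcal{S}=\varnothing$ or $|\mathcal{S}|=1$.

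\emph{Claim: if $G$ has no star, then $|V|$ is even.} Induct on $|V|$.
\begin{itemize}
\item $|V|=1$ is impossible, since $K_1$ has a star.
\item $|V|=2$ is even, so the claim holds.
\item $|V|\geq 3$: by Theorem~\ref{stars-extreme}, $|\mathcal{D}|=|\operatorname{ex}(V)|\geq 2$, so we may pick $x\in\mathcal{D}$. The vertex $v(x)$ is distinct from $x$, because $x\in N[x]$. By Theorem~\ref{main}, $G-\{x,v(x)\}$ is a star-free $n$-convex geometry of order $|V|-2\geq 1$. By induction its order is even, so $|V|$ is even.
\end{itemize}
Note that order $1$ cannot arise in the inductive step, because $K_1$ has a star; this is consistent with Theorem~\ref{main}.

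\emph{Case $\mathcal{S}=\{s\}$.} If $V=\{s\}$, then $|V|=1$ is odd. Otherwise let $H\coloneq G-s$, so that $G=H^*$ is the star-augmented graph of $H$. We must check that $H$ has no star. If $t$ were a star of $H$, then $N_G[t]=V$, so $t\in\mathcal{S}$ with $t\neq s$, contradicting $|\mathcal{S}|\leq 1$. Hence Theorem~\ref{th:convexgeometry} applies and $H$ is an $n$-convex geometry without stars. By the claim $|V|-1$ is even, so $|V|$ is odd.

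Combining the two cases: if $G$ has a star then $|V|$ is odd, and if $G$ has no star then $|V|$ is even, which proves the equivalence. The only delicate point is confirming that the hypotheses of Theorem~\ref{main} and Theorem~\ref{th:convexgeometry} are met. For Theorem~\ref{main} this means $\mathcal{D}\neq\varnothing$, which comes from Theorem~\ref{stars-extreme}. For Theorem~\ref{th:convexgeometry} it means $H$ is star-free, which was checked above.
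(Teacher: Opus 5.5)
Your proof is correct and follows the paper's argument. In the star-free case you repeatedly remove a pair $\{x,v(x)\}$ with $x\in\mathcal{D}$ using Theorem~\ref{main}, phrased as an induction where the paper phrases it as an iteration; in the star case you delete the star and apply Theorem~\ref{th:convexgeometry}. Your explicit checks that $G-s$ is star-free and that $G=K_1$ (where $G-s$ would be empty) needs separate treatment are small points the paper leaves implicit.
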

\begin{proof} Suppose that $G$ admits no star. Assume first that $|V|\leq 2.$ Since $K_1$ has a star, we necessarily have $|V|=2$ even and $G=2K_1$, the only $n$-convex geometry with two vertices.
Suppose next that  $|V|\geq 3$. By Theorem~\ref{stars-extreme},  $\mathcal{D}\neq \varnothing$ and we pick a vertex $x\in \mathcal{D}.$ By
Theorem~\ref{main}, $H\coloneq G-\{x,v(x)\}$ is an $n$-convex geometry without stars. Starting from this graph, if the number of vertices is at least three, we can iterate the process of deleting two vertices until we reach a graph with at most  two vertices which is an $n$-convex geometry with no star. By the case with at most two vertices examined before, we deduce that the final graph in this process admits two vertices. As a consequence
$|V|$ is even. 

What shown above, proves that if $|V|$ is odd, then $G$ must admit a star.
Conversely suppose that $G$ admits a star $s$. Then, once set $G'\coloneq G-\{s\},$ we have that $G'$ admits no star and $G$ is the star-augmented graph of $G'.$
By Theorem~\ref{th:convexgeometry}, also $G'$ is a convex geometry and it admits no star. Then, by what shown before $|V|-1$ is even, that is, $|V|$ is odd.
\end{proof}
    
We are now able to show that any $n$-convex geometry with an odd number $2k+1$ of vertices  is constructed in a precise way 
from an $n$-convex geometry with $2k$ vertices.


\begin{prop}\label{charodd}
    A graph $G$ with an odd number of vertices is an $n$-convex geometry if and only if the following conditions both hold: 
\begin{enumerate}[label=$(\arabic*)$, ref=\thetheorem (\arabic*)]
    \item \label{charodd first} $G$ contains a unique star vertex $s$;
   \item \label{charodd second} $G-\{s\}$ is an $n$-convex geometry.
\end{enumerate}
\end{prop}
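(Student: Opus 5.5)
The plan is to assemble the two directions from Corollary~\ref{evenodd}, Lemma~\ref{conv-star} and Theorem~\ref{th:convexgeometry}.

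For the forward direction, let $G=(V,E)$ be an $n$-convex geometry with $|V|$ odd. Corollary~\ref{evenodd} says that $G$ admits a star. Lemma~\ref{conv-star} says that $|\mathcal{S}|\leq 1$, so the star is unique; call it $s$, which gives condition $(1)$.

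Next I will check that $G':=G-\{s\}$ has no star. Suppose $t\in V\setminus\{s\}$ were adjacent to every vertex of $G'$. Since $s$ is adjacent to everything in $G$, $t$ would also be adjacent to $s$, so $N[t]=V$ in $G$. Then $t\in\mathcal{S}$, contradicting uniqueness of $s$. Now $G$ is exactly the star-augmented graph $(G')^*$, with $s$ playing the role of the added vertex. Theorem~\ref{th:convexgeometry} applies to the star-free graph $G'$ and shows that $G'$ is an $n$-convex geometry, which is condition $(2)$.

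For the converse, assume $(1)$ and $(2)$. The same observation as above shows that $G-\{s\}$ has no star: a star $t$ of $G-\{s\}$ would be a second star of $G$, contradicting the uniqueness in $(1)$. Again $G=(G-\{s\})^*$, so Theorem~\ref{th:convexgeometry} gives that $G$ is an $n$-convex geometry.

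The only delicate point is verifying the no-star hypothesis of Theorem~\ref{th:convexgeometry} in both directions. This is handled by the single observation that a star of $G-\{s\}$ is automatically a star of $G$. Oddness of $|V|$ is used only in the forward direction, to guarantee that a star exists.
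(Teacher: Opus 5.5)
Your proof is correct and is essentially the paper's argument: Corollary~\ref{evenodd} together with Lemma~\ref{conv-star} yields the unique star $s$, and Theorem~\ref{th:convexgeometry} applied to $G-\{s\}$, with $G=(G-\{s\})^*$, gives both directions. Your explicit check that $G-\{s\}$ is star-free (a star of $G-\{s\}$ would be a second star of $G$) supplies a hypothesis of Theorem~\ref{th:convexgeometry} that the paper only asserts; like the paper, you tacitly set aside the degenerate case $G=K_1$, where $G-\{s\}$ has no vertices.
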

\begin{proof} Let $G=(V,E)$ be a graph with $|V|$ odd. Assume that $G$ is an $n$-convex geometry. Then, by Corollary~\ref{evenodd}, $G$ admits a unique star $s$. Now consider the graph $G'\coloneq G\setminus \{s\}$. Since $G=(G')^*$, by Theorem~\ref{th:convexgeometry}, we have that $G'$ is an $n$-convex geometry. Thus $(1)$ and $(2)$ hold.

    Conversely assume that $(1)$ and $(2)$ hold. Then $G'\coloneq G\setminus \{s\}$ is an $n$-convex geometry with no star. Thus, by Theorem~\ref{th:convexgeometry}, we have that $G=(G')^*$ is an $n$-convex geometry. 
\end{proof}

\section{Applications}\label{sec:applications}
As a consequence of Theorem~\ref{333first}, taken $k \in \mathbb{N}\cup\{0\}$, there exists a finite number of graphs which are $n$-convex geometries and for which $k$ is the minimum degree. We have already seen that for $k=0$ the only ones are $K_1$ and $2K_1$. We now discover those with $k=1$. 
\begin{prop}\label{pendant}
    Let $G=(V,E)$ be a graph with at least a pendant vertex. Then $(G,n)$ is a convex geometry if and only if $G \in \{ P_3,  P_4\}$.
\end{prop}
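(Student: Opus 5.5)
The reverse implication is immediate: $P_3$ and $P_4$ have pendant vertices and are $n$-convex geometries by Remark~\ref{341}. So the plan concentrates on the forward direction. Assume $G=(V,E)$ is an $n$-convex geometry with a pendant vertex, so $\delta(G)\le 1$.

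First I bound the order. Since $G$ has a vertex of degree $0$ or $1$, Theorem~\ref{333first} gives $|V|\le 2\delta(G)+2\le 4$. Next I dispose of the degenerate cases. If $G$ were disconnected, Theorem~\ref{103} would force $G=2K_1$, which has no pendant vertex. Hence $G$ is connected, and $|V|\ge 2$ because a pendant vertex needs a neighbour. For $|V|=2$ we would get $G=K_2$, which is excluded by Proposition~\ref{104}. So $|V|\in\{3,4\}$, and in either case $G$ is connected, is not complete (Proposition~\ref{104}), and has a pendant vertex.

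For $|V|=3$, the only connected non-complete graph is $P_3$, so this case is done.

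For $|V|=4$ the order is even, so Corollary~\ref{evenodd} says $G$ has no star. This rules out $K_{1,3}$, as well as the paw ($K_3$ plus a pendant vertex attached to a triangle vertex), since in both the vertex adjacent to all others is a star. Among connected graphs on four vertices with a pendant vertex, the only remaining one is $P_4$. This gives $G\in\{P_3,P_4\}$.

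This last step is the one I expect to need the most care. I would first confirm that the list of connected 4-vertex graphs with a pendant vertex is exactly $P_4$, $K_{1,3}$ and the paw, i.e.\ that every tree on four vertices is $P_4$ or $K_{1,3}$. As an independent check I would use Theorem~\ref{stars-extreme}, which requires $|\mathcal{D}|\ge 2$. In $K_{1,3}$ no vertex has degree $2$, so $\mathcal{D}=\varnothing$, which is an immediate contradiction.
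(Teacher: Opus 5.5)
Your proof is correct and follows the paper's argument step by step. You bound $|V|\le 4$ via Theorem~\ref{333first}, use Theorem~\ref{103} and Proposition~\ref{104} to get connectedness and non-completeness, and then eliminate the order-$4$ candidates with a star via Corollary~\ref{evenodd}. Your enumeration is in fact more careful than the paper's: the paper lists only $P_4$ and the $3$-pan (paw) as connected $4$-vertex graphs with a pendant vertex and overlooks $K_{1,3}$, which, as you note, is excluded by the same parity argument (or by $\mathcal{D}=\varnothing$ and Theorem~\ref{stars-extreme}).
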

\begin{proof}
If $G = P_3$ or $G= P_4$ then $G$ has at least a pendant vertex and we have seen in Remark~\ref{341} that, in both cases, $(G,n)$ is a convex geometry.

Suppose now that $(G,n)$ is a convex geometry and let  $v \in V$ be such that $d(v)=1$. In particular, we have $E \not = \varnothing$ and $|V| \geq 2$ and so, by Theorem~\ref{103} and~\ref{104}, $G$ is  connected and not complete. By Theorem~\ref{333}, we also have that $|V| \leq 4$. If $|V|=2$, then we have $G \cong K_2$, a contradiction. So we have $3 \leq |V| \leq 4$.
If $|V|=3$, note that the only connected graph with $3$ vertices and a pendant vertex is $P_3.$ Assume finally that $|V|=4$. There are two connected graphs with $4$ vertices and a pendant vertex, that is, $P_4$ and the so-called $3$-pan $P=([4],\{ \{1,2\}, \{2,3\}, \{3,4\}, \{1,3\}\})$. However, $P$ admits a star and hence, by Corollary~\ref{evenodd}, it is not an $n$-convex geometry. Thus $G=P_4.$
\end{proof}

We now describe some classes of graphs which are $n$-convex geometries.
\begin{prop}\label{main-examples} The following facts hold.
\begin{enumerate}[label=$(\roman*)$, ref=\thetheorem $(\roman*)$]
  \item  \label{main-examples first} The only cycle $C_n$ which is an $n$-convex geometry is $C_4$; the only paths which are $n$-convex geometries are $P_1, P_3, P_4.$ 
        \item  \label{main-examples second} The only fan which is an $n$-convex geometry is the $4$-fan.
        \item  \label{main-examples third} The only complete bipartite graphs which are $n$-convex geometries are $K_{1,2}\cong P_3$ and $K_{2,2}\cong C_4$.
\end{enumerate}
    
\end{prop}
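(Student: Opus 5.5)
The plan is to treat the three families separately. In each case the main tools are the necessary conditions already proved: Theorem~\ref{103} (disconnected graphs), Proposition~\ref{104} (complete graphs), Theorem~\ref{333} (the bound $|V|\le 2\delta(G)+2$), Theorem~\ref{stars-extreme} ($|\mathcal{D}|\ge 2$ for $|V|\ge 2$), Corollary~\ref{evenodd} (parity versus stars), Proposition~\ref{charodd} (odd-order geometries) and Theorem~\ref{main} (deleting $x$ and $v(x)$).

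For $(i)$, cycles first. $C_3=K_3$ is excluded by Proposition~\ref{104}. $C_4$ is $2$-regular on $4$ vertices, so it is an $n$-convex geometry by Corollary~\ref{regular2}. For $n\ge 5$, every vertex of $C_n$ has degree $2\neq n-2$, so $\mathcal{D}=\varnothing$, which contradicts $|\mathcal{D}|\ge 2$ in Theorem~\ref{stars-extreme}. (Alternatively, Theorem~\ref{333first} gives $n\le 6$, and the remaining cases can be handled via parity.) For paths: $P_1=K_1$ is a geometry, $P_2=K_2$ is excluded by Proposition~\ref{104}, and for $n\ge 3$ the path has a pendant vertex, so Proposition~\ref{pendant} leaves exactly $P_3$ and $P_4$.

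For $(ii)$, I take the $m$-fan to be $P_m$ joined to a universal vertex $s$. For $m\le 2$ this is complete with at least two vertices, so it is excluded by Proposition~\ref{104}. For $m\ge 3$, $P_m$ has no star, so $s$ is the unique star of the fan, and the fan is the star-augmented graph $P_m^*$. By Theorem~\ref{th:convexgeometry}, the fan is an $n$-convex geometry if and only if $P_m$ is. By $(i)$ this forces $m\in\{3,4\}$. For $m=3$ the fan has $4$ vertices and a star, so it is excluded by Corollary~\ref{evenodd}. Hence only the $4$-fan survives, and it is indeed a geometry because $P_4$ is. If the paper counts fan vertices differently, only the indexing changes.

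For $(iii)$, let $K_{a,b}$ have parts $A,B$ with $a\le b$. The case $a=b=1$ is $K_2$, which is excluded. If $a=1<b$, the vertex of $A$ is the unique star and removing it leaves $bK_1$. By Proposition~\ref{charodd} together with Theorem~\ref{103}, this requires $b=2$, giving $K_{1,2}=P_3$. If $a\ge 2$ there is no star. A vertex in $A$ has degree $b$ and a vertex in $B$ has degree $a$, so by Theorem~\ref{stars-extreme} we need $\mathcal{D}\neq\varnothing$, which forces $a=2$. Theorem~\ref{333first} then gives $2+b\le 6$. The case $b=2$ is $C_4$. For $b\in\{3,4\}$, $\mathcal{D}=A$ and $v(x)$ is the other vertex of $A$. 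By Theorem~\ref{main}, $G-A=bK_1$ would then have to be an $n$-convex geometry, contradicting Theorem~\ref{103}. The main obstacle is nothing deeper than bookkeeping: checking the hypotheses of Theorem~\ref{main} (at least three vertices, no star) in these cases, and fixing the fan convention.
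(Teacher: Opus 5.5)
Your argument is sound in substance and reaches the right lists, but part of it follows a different route from the paper's.

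\textbf{Comparison with the paper.} For cycles you use the paper's idea (no quasi-stars in $C_k$ for $k\ge 5$) without its preliminary bound $k\le 6$. For paths you cite Proposition~\ref{pendant}, whereas the paper notes that $P_k$ with $k\ge 5$ would need a quasi-star of degree $k-2\ge 3$.

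The real difference is in (ii). The paper uses Corollary~\ref{evenodd} to force $k$ to be even and then observes that a quasi-star would have degree $k-1\in\{2,3\}$, so $k=4$. You instead identify the fan with the star-augmented path and transfer the answer from (i) via Theorem~\ref{th:convexgeometry}. Your route has the advantage of also showing that the $4$-fan, being $P_4^*$, really is an $n$-convex geometry, which the paper's degree count leaves implicit.

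In (iii), you handle $a=1$ by removing the star rather than citing Proposition~\ref{pendant}. For $a\ge 2$ you follow the paper: quasi-star degrees, then Theorem~\ref{main} reducing to $bK_1$. You replace the edge count of Proposition~\ref{famiglie} by $|V|\le 2\delta(G)+2$. In fact no bound on $b$ is needed, since the Theorem~\ref{main} argument disposes of every $b\ge 3$.

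\textbf{Two slips to fix.}

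First, $P_3$ does have a star: its middle vertex is adjacent to both ends. So your claim that $P_m$ is star-free for $m\ge 3$ fails at $m=3$. There the equivalence you draw from Theorem~\ref{th:convexgeometry} is actually false: $P_3$ is an $n$-convex geometry, while the $3$-fan has two stars and is not. Nothing breaks, because you exclude $m=3$ separately through Corollary~\ref{evenodd}. Still, the star-augmentation step must be restricted to $m\ge 4$.

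Second, Proposition~\ref{charodd} applies only to graphs of odd order, so it does not cover $K_{1,b}$ with $b$ odd. You can close this in either of two ways:
\begin{itemize}
\item add Corollary~\ref{evenodd} to exclude odd $b$; or
\item apply Theorem~\ref{th:convexgeometry} directly, which is legitimate for every $b\ge 2$ because $bK_1$ has no star. Then Theorem~\ref{103} gives $b=2$.
\end{itemize}
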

\begin{proof}
$(i)$  Assume that $C_k$ is an $n$-convex geometry. Then, by Theorem ~\ref{333}, $\delta (G)=2\geq \frac{k-2}{2}$. As a consequence $k\leq 6$. Now $C_3$ is not an $n$-convex geometry because has three stars. $C_5$ and $C_6$ are not  $n$-convex geometries because they admit no quasi-stars as necessary by Theorem~\ref{stars-extreme}. So the only cycle which is an $n$-convex geometry is $C_4.$

Consider now  $P_k$. When $k\in\{1,3,4\}$, we know that $P_k$ is an $n$-convex geometry (see Remark~\ref{341}. Let $k\geq 5$ and assume, by contradiction, that $P_k$ is an $n$-convex geometry. Then it must admit a quasi-star, that is an element of degree $k-2\geq 3,$ a contradiction.
\smallskip

$(ii)$ Consider now the $k$-fan, that is, the graph $F_k=P_k+P_1$ for $k\geq 3$. Assume that $F_k$ is an $n$-convex geometry. Since this graph admits a star, by Corollary~\ref{evenodd}, we have that $k+1$ is odd, that is $k$ is even. We look to the vertices in $\mathcal{D}$, that is those of degree $k-1.$ Since apart the star all the vertices have degree $2$ or $3$. We must have $k-1=2$ or $k-1=3$. Since $k$ is even, the only possibility is $k=4.$

\smallskip

$(iii)$ Consider the complete bipartite graph  $G\coloneq K_{r,s}$ for $r,s\geq 1$ and $r\leq s$. Suppose that $G$ is an $n$-convex geometry. Assume first $r=1$. 
Then all the vertices apart the star are pendant. Thus, by Proposition~\ref{pendant}, $s=2.$
Assume next $r=2$. Now the graph has no star and hence $s\geq 2$ is even. If $s=2$ we find $G=K_{2,2}\cong C_4$. Assume $s\geq 4.$ 
We now count the edges. By Proposition~\ref{famiglie}, we must have $2s=|E|\geq \frac{(2+s)s}{4}$ which implies $s\leq 6.$ So, it remains to understand if $K_{2,4}$ and $K_{2,6}$ are $n$-convex geometries.
The elements in $\mathcal{D}$ are those of degree $r+s-2=s$ and they form the part $V_1=\{x_1,x_2\}$ of the graph of size $2$. Now $v(x_1)=V\setminus N[x_1]=x_2$ and, by Theorem~\ref{main}, $K_{2,s}-\{x_1,x_2\}$ is an $n$-convex geometry. But such a graph is totally disconnected with $s\geq 4$ vertices, which is impossible by Theorem~\ref{103}. 

Let now pass to consider $r\geq 3.$ In this case we also have $s\geq 3$ and $G=K_{r,s}$ fails to admit vertices of degree $r+s-2$. Indeed, the only degrees of vertices in $G$ are $r,s$. Now $r+s-2=r$ implies $s=2$; $r+s-2=s$ implies $r=2.$
\end{proof}

\subsection{Threshold and quasi-threshold $n$-convex geometries}
We introduce now a graph whose properties allow to characterize when an $n$-convex geometry is a quasi-threshold graph,  or a threshold graph. Recall that a graph is {\bf quasi-threshold} if it is $\{P_4,C_4\}$-free and {\bf threshold}  if 
it is $\{P_4,C_4, 2K_2\}$-free.

Since the list is very poor we deduce that, typically, an $n$-convex geometry admits as induced subgraph a path or a cycle on four vertices, a somewhat unexpected result. 

\begin{defn}
    {\rm The {\bf neighbourhood comparability graph} associated with $G$ is the graph $G_N=(V,E_N)$ where, for every   $x,y\in V,$ distinct $\{x,y\}\in E_N$ if $x,y$ are comparable for $\leq$, that is $ x\leq y$ or $ y\leq x$ holds.
}
   
\end{defn}
Obviously, we have that $G_N$ is a spanning subgraph of $G.$ We can easily characterize when they coincide.
\begin{lemma}\label{coco}
Let $G=(V,E)$ be a graph. Then $G=G_N$ if and only if $G$ is a quasi-threshold graph.
\end{lemma}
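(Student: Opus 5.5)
Since $E_N\subseteq E$ always holds (by Lemma~\ref{upset1 first}, comparable distinct vertices are adjacent), the statement $G=G_N$ is equivalent to: every edge $\{x,y\}\in E$ joins comparable vertices, i.e.\ $N[x]\subseteq N[y]$ or $N[y]\subseteq N[x]$. So the plan is to prove that $G$ contains an edge between incomparable vertices if and only if $G$ contains an induced $P_4$ or $C_4$, and then take contrapositives.

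\emph{Forward direction.} Assume $\{x,y\}\in E$ with $x,y$ incomparable. We pick $a\in N[x]\setminus N[y]$ and $b\in N[y]\setminus N[x]$. Since $x\in N[y]$ and $y\in N[x]$, we get $a\neq x,y$ and $b\neq x,y$. Also $a\neq b$, because $a\notin N[y]$ while $b\in N[y]$. Thus $a,x,y,b$ are four distinct vertices with edges $ax$, $xy$, $yb$ and non-edges $ay$, $xb$. If $ab\notin E$, then $G[\{a,x,y,b\}]$ is the path $a\,x\,y\,b\cong P_4$. If $ab\in E$, it is the cycle $a\,x\,y\,b\,a\cong C_4$. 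Either way $G$ is not quasi-threshold.

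\emph{Converse.} Suppose $G$ has an induced $P_4$ or $C_4$ on the vertices $a,x,y,b$, with $ax,xy,yb\in E$ and $ay,xb\notin E$. Both shapes contain such a labelling: in $P_4$ take $x,y$ to be the middle vertices, and in $C_4$ take any edge $xy$. Then $a\in N[x]\setminus N[y]$ and $b\in N[y]\setminus N[x]$, so the edge $\{x,y\}\in E$ joins incomparable vertices and lies in $E\setminus E_N$. Hence $G\neq G_N$.

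The only real step is choosing the witnesses $a,b$ in the forward direction and checking that they are distinct from each other and from $x,y$. Once that is done, the dichotomy on whether $ab$ is an edge yields $P_4$ or $C_4$ immediately, so I expect no genuine obstacle.
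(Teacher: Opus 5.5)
Your proof is correct, but it takes a different route from the paper. The paper gives a one-line appeal to a known characterization of quasi-threshold graphs, \cite[Theorem 3]{Ya}: $G$ is quasi-threshold if and only if $N[x]$ and $N[y]$ are comparable by inclusion for every edge $\{x,y\}\in E$. Combined with the observation that $G_N$ is a spanning subgraph of $G$, this gives the lemma at once.

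You instead prove that characterization from scratch. For an edge $\{x,y\}$ with incomparable endpoints, you take witnesses $a\in N[x]\setminus N[y]$ and $b\in N[y]\setminus N[x]$. These are distinct from each other and from $x,y$, and $\{a,x,y,b\}$ induces $P_4$ or $C_4$ according to whether $\{a,b\}\in E$. Conversely, the middle edge of an induced $P_4$, or any edge of an induced $C_4$, joins incomparable vertices. All your distinctness and adjacency checks are right.

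The citation keeps the paper short but outsources the only real content of the lemma. Your argument is just as short and is self-contained. It also proves slightly more, an edge-local statement: $\{x,y\}\in E\setminus E_N$ if and only if $\{x,y\}$ is the middle edge of an induced $P_4$ or an edge of an induced $C_4$.
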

\begin{proof}
The proof is straightforward recalling one famous characterization of quasi-threshold graphs (see, for instance, \cite[Theorem 3]{Ya}): 
 $G$ is quasi-threshold  if and only if, for every $\{x,y\}\in E$, the neighbours $N[x]$  and $N[y]$ are comparable by inclusion. 
\end{proof}

\begin{theorem}\label{convcomp}
 Let $(G,n)$ be a convex geometry. The following facts are equivalent:
 \begin{enumerate}[label=$(\roman*)$, ref=\thetheorem $(\roman*)$]
\item \label{convcomp first} 
 $G$ is quasi-threshold;
 \item \label{convcomp second} $G\in \{K_1, 2K_1, P_3\};$ 
 \item \label{convcomp third} $G$ is threshold.
 
 \end{enumerate}In particular, if $|V|\geq 4$, then either $G$ admits an induced subgraph isomorphic to $P_4$ or isomorphic to $C_4.$ 
\end{theorem}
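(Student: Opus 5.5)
The plan is to prove the cycle (ii)$\Rightarrow$(iii)$\Rightarrow$(i)$\Rightarrow$(ii). The implication (ii)$\Rightarrow$(iii) is a direct check: $K_1$, $2K_1$ and $P_3$ have at most three vertices, so none of them contains an induced $P_4$, $C_4$ or $2K_2$. The implication (iii)$\Rightarrow$(i) is immediate from the definitions, since $\{P_4,C_4,2K_2\}$-free graphs are $\{P_4,C_4\}$-free. The final sentence of the statement follows from (i)$\Leftrightarrow$(ii): if $|V|\geq 4$ then $G\notin\{K_1,2K_1,P_3\}$, so $G$ is not quasi-threshold, and hence it has an induced $P_4$ or $C_4$.

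The real work is (i)$\Rightarrow$(ii). Assume $G$ is a quasi-threshold $n$-convex geometry. The cases $|V|\leq 3$ go by inspection. For $|V|\leq 2$ we get $K_1$ or $2K_1$, since $K_2$ is excluded by Proposition~\ref{104}. For $|V|=3$, Corollary~\ref{evenodd} forces a star, and Proposition~\ref{104} excludes $K_3$, so $G=P_3$.

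Now suppose $|V|\geq 4$; we seek a contradiction. First I would reduce to the star-free case. If $|V|$ is odd, Proposition~\ref{charodd} gives a unique star $s$, and $G-\{s\}$ is an $n$-convex geometry with $|V|-1\geq 4$ vertices. Being an induced subgraph, $G-\{s\}$ is still quasi-threshold. So we may assume $|V|=k\geq 4$ is even and $G$ has no star; by Theorem~\ref{103}, $G$ is connected. By Lemma~\ref{coco}, $G=G_N$, so every edge joins $\leq$-comparable vertices. The poset $P(G)$ is a genuine poset by Theorem~\ref{noclosedtwins}.

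Pick $x\in\mathcal{D}$, which is nonempty by Theorem~\ref{stars-extreme}. By Proposition~\ref{Dposet}, $x$ is maximal. Every neighbour $y$ of $x$ is comparable with $x$, so $y\leq x$, i.e. $N[y]\subseteq N[x]=V\setminus\{v(x)\}$. Hence no neighbour of $x$ is adjacent to $p\coloneq v(x)$. But $N(x)=V\setminus\{x,p\}$, so $p$ is adjacent to nothing and is isolated, contradicting connectedness. This already rules out every star-free case with $k\geq 3$, and with it the odd case via the reduction above.

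The main point to check carefully is this comparability argument, together with the fact that the reduction through Proposition~\ref{charodd} preserves both the quasi-threshold property and the bound $|V|\geq 4$ (or at least $\geq 3$). Both hold because induced subgraphs of quasi-threshold graphs are quasi-threshold.
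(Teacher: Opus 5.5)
Your proof is correct, but the core implication (i)$\Rightarrow$(ii) takes a different route from the paper's.

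\textbf{The paper's route.} It neither reduces to the star-free case nor uses connectedness. It first shows $|\mathcal{D}|=2$: two adjacent quasi-stars would have comparable closed neighbourhoods of the same size $|V|-1$, hence would be true twins. So $\mathcal{D}=\{x_1,x_2\}$ is a non-adjacent pair with $v(x_1)=x_2$, $v(x_2)=x_1$, and $\mathrm{ex}(V)=\mathcal{D}$. In the star-free case it combines $\mathrm{Max}(V)=\mathcal{D}$ (Proposition~\ref{Dposet}) with $\mathrm{ex}(V)\subseteq\mathrm{Min}(V)$ (Lemma~\ref{upset1 fifth}). Every $\uparrow x$ meets $\mathcal{D}$, so $x\le x_i$ with $x_i$ minimal, which forces $x=x_i$, hence $V=\mathcal{D}$ and $G=2K_1$. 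When there is a star $s$, it shows $V=N[V\setminus\{x_1,x_2\}]$. Since $V\setminus\{x_1,x_2\}$ is $n$-convex, it gets $\{s\}=N[V]=\widehat{V\setminus\{x_1,x_2\}}=V\setminus\{x_1,x_2\}$, so $G=P_3$.

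\textbf{Your route.} You take a single quasi-star $x$ and use only its maximality to get $N[y]\subseteq N[x]$ for every neighbour $y$. You then observe that $v(x)$ is isolated, contradicting Theorem~\ref{103}. The star case you push onto the star-free one via Proposition~\ref{charodd} and the fact that induced subgraphs of quasi-threshold graphs are quasi-threshold. Orders $\le 3$ you settle by inspection.

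\textbf{Comparison.} Your argument is shorter and needs neither the count $|\mathcal{D}|=2$ nor the link between extreme points and minimal elements. The price is reliance on heavier structural results: Theorem~\ref{103}, Corollary~\ref{evenodd} and Proposition~\ref{charodd}, the last two resting on Theorems~\ref{main} and~\ref{th:convexgeometry}. The paper's argument stays inside the poset framework and yields $V=\mathcal{D}$ directly, without invoking connectedness or the parity theory.
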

\begin{proof}
 $(i)\Rightarrow (ii)$ Assume that $G=(V,E)$ is a quasi-threshold graph. If $|V|=1$, then $G=K_1.$
 Let $|V|\geq 2$. Then, by Lemma~\ref{coco}, $G=G_N.$ Since $G$ is an $n$-convex geometry, by Theorem~\ref{stars-extreme}, we know that $|\mathcal{D}|\geq 2.$ 
 We claim that $|\mathcal{D}|=2.$ Assume, by contradiction, that instead $|\mathcal{D}|\geq 3.$ Let $x,y,z\in \mathcal{D} $ be distinct. Since $|V-N[x]|=1,$ one among $y,z$ belongs to $N[x].$ Say $\{y,x\}\in E.$ Then, we also have $\{y,x\}\in E_N$ and thus $N[x]$ and $N[y]$ are comparable by inclusion. However, since those sets have the same size $|V|-1$, we reach $N[x]=N[y]$ against Theorem~\ref{noclosedtwins}.
Thus $|\mathcal{D}|=2$ and, by the same argument as above, we have $\mathcal{D}=\{x_1,x_2\}$ with $x_1\neq x_2$ not adjacent. As a consequence, $v(x_1)=x_2$ and $v(x_2)=x_1.$ 
By Theorem~\ref{stars-extreme}, we now deduce  $\mathrm{ex}(V)=\mathcal{D}$.
By Proposition~\ref{Dposet} and Lemma~\ref{upset1 fifth}, we then obtain $\mathrm{Max}(V)=\mathcal{D}=\mathrm{ex}(V)\subseteq \mathrm{Min}(V).$
 Pick $x\in V$ and consider $\uparrow x.$ This upset intersects $\mathrm{Max}(V)=\mathcal{D}$ and hence we have, say, $x\leq x_1.$ Since $x_1$ is minimal in $V,$ this implies $x=x_1\in \mathcal{D}.$
As a consequence, we have
$V=\mathcal{D}$ and $G=2K_1.$ Assume next that $G$ has a star $s$. Then $V=N[(V\setminus\{x_1\})\cap (V\setminus\{x_2\})]=N[V\setminus\{x_1,x_2\}].$ It follows that
$\{s\}=N[V]=N[N[V\setminus\{x_1,x_2\}]]$. Now since $x_1, x_2$ are extreme of $V$, we have that $V\setminus\{x_1,x_2\}\in n$ and thus $\{s\}=V\setminus\{x_1,x_2\}$, that is, $V=\{s,x_1,x_2\}$. Since $x_1, x_2$ are not adjacent this forces $G=P_3.$
 \smallskip
 
 $(ii)\Rightarrow (iii)$ This is trivial.

 \smallskip
 
 $(iii)\Rightarrow (i)$ This follows immediately recalling that every threshold graph is quasi-threshold.

For the last part of the statement, simply note that when $G$ has at least $4$ vertices then it does not appear in the list $\{K_1, 2K_1, P_3\}$ and thus $G$ is not quasi-threshold. Thus it is not $\{P_4,C_4\}$-free.
\end{proof}
 Note that, as a trivial consequence of Theorem~\ref{convcomp}, a graph with four vertices is an $n$-convex geometry if and only if it is isomorphic to $P_4$ or $C_4.$ Moreover, completing the content of Proposition~\ref{famiglie second}, a forest is an $n$-convex geometry if and only if it is isomorphic to one of  $K_1, 2K_1, P_3,P_4.$
\section{Quasi-stars augmented graphs}\label{sec:final}
In this final section, we briefly report on the problem of the construction of $n$-convex geometries of even order $2k$ from those of odd order $2k-1$, for $k\geq 2$. 
We know that every $2k-1$ order $n$-convex geometry admits a star.
A natural idea is to 
introduce  arrangements similar to the one of Definition~\ref{added-star}, taking into account the presence of the star.
For the moment, we have discovered one successful arrangement, the one of adding a false twin of the star.

\begin{defn}\label{quasi-star-aug}
   {\rm  Let $G=(V,E)$ be a graph with a star vertex $s\in V$ and let $q\notin V.$ Define $G'=(V',E')$ to be the graph with $V'\coloneq V\cup\{q\}$ and $E'\coloneq E\cup\{\{x,q\}: x\in V-\{s\}\}$.  We call $G'$ the {\bf quasi-star-augmented graph} of $G$ by $q$.} 

\end{defn}
Note that $q$  is false twin of $s$ and thus $q$ is a quasi-star for $G'$. Moreover,  the original star $s$ in $G$ becomes another quasi-star in $G'$ and $G'$ admits no star.
\begin{theorem}\label{th:quasi-stars}
Let $G$ be an $n$-convex geometry with a star vertex. Then the quasi-star-augmented graph of $G$ is also an $n$-convex geometry. 
\end{theorem}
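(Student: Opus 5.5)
The plan is to reduce to the star-free geometry $H\coloneq G-\{s\}$ and show that the $n$-convexity of $G'$ is the product of $n_H$ with the free convexity on $\{s,q\}$.

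First I would dispose of the degenerate case. If $G=K_1$, then $G'=2K_1$, which is an $n$-convex geometry by Theorem~\ref{103}. Otherwise $|V|\geq 3$ and $|V|$ is odd by Corollary~\ref{evenodd}. By Lemma~\ref{conv-star} the star $s$ is unique. By Proposition~\ref{charodd}, $W\coloneq V-\{s\}$ induces an $n$-convex geometry $H=G[W]$. $H$ has no star, since a star of $H$ would be a second star of $G$. In $G'$ we then have $N'[s]=W\cup\{s\}$ and $N'[q]=W\cup\{q\}$. For every $x\in W$ we have $N'[x]=N_H[x]\cup\{s,q\}$.

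The key step is to describe $n_{G'}$ exactly. For $Y\subseteq W$ and $Z\subseteq\{s,q\}$, intersecting the closed neighbourhoods above gives
\[
N'[Y\cup Z]=N_H[Y]\cup(\{s,q\}-Z),
\]
valid also when $Y=\varnothing$, where $N_H[\varnothing]=W$. Since $H$ has no star, $N_H[W]=\varnothing$. Hence $N_H[Y]$ ranges over all of $n_H$, including $\varnothing$, as $Y$ ranges over subsets of $W$. This yields
\[
n_{G'}=\{K\cup Z' : K\in n_H,\ Z'\subseteq\{s,q\}\}.
\]
The empty set is included via $K=\varnothing$, $Z'=\varnothing$, which is $N'[V']$. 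In particular, $G'$ has no star. This identity is the only place requiring care: one must check the boundary cases $Y=\varnothing$ and $Z=\{s,q\}$, and use that $H$ is star-free so that $\varnothing\in\mathcal{N}_H$. I expect it to be the main (though mild) obstacle.

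Finally I would verify condition~\ref{62third} of Theorem~\ref{62} for $G'$. Take $K\cup Z'\in n_{G'}$ with $K\cup Z'\neq V'$. If $Z'\neq\{s,q\}$, add a missing element of $\{s,q\}$; the result $K\cup(Z'\cup\{z\})$ is still of the required form. If $Z'=\{s,q\}$, then $K\neq W$. Since $H$ is an $n$-convex geometry, Theorem~\ref{62third} gives $v\in W-K$ with $K\cup\{v\}\in n_H$, and hence $K\cup\{v\}\cup\{s,q\}\in n_{G'}$. Thus $G'$ is an $n$-convex geometry.
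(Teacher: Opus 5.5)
Your proof is correct and complete. I checked every step, including the boundary cases $Y=\varnothing$ and $Z=\{s,q\}$ in $N'[Y\cup Z]=N_H[Y]\cup(\{s,q\}-Z)$. I also checked your use of the star-freeness of $H$ to get $\varnothing\in\mathcal{N}_H$, hence $n_H=\mathcal{N}_H$.

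The paper gives no proof to compare with: it calls the argument technical and omits it. Your route mirrors the one the paper uses for star-augmented graphs in Lemma~\ref{lem:nconvexsets} and Theorem~\ref{th:convexgeometry}. There, too, the new convexity is described explicitly in terms of a star-free one, and the geometry property is then transferred.

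Two choices keep your version short. First, you pass to $H=G-\{s\}$. Then $G'$ is just $H$ plus two non-adjacent vertices adjacent to all of $W$, and $n_{G'}=\{K\cup Z' : K\in n_H,\ Z'\subseteq\{s,q\}\}$ is simply the product of $n_H$ with the free convexity on $\{s,q\}$. Second, you verify Theorem~\ref{62third} directly, rather than recomputing hulls of extreme sets as in the proof of Theorem~\ref{th:convexgeometry}.

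Because the decomposition $K\cup Z'$ is unique, and your description of $n_{G'}$ used only that $H$ is star-free, the same criterion run backwards gives the converse. So you have in fact shown more: for any star-free graph $H$, adding two non-adjacent vertices adjacent to every vertex of $H$ yields an $n$-convex geometry if and only if $H$ is one. This is the exact counterpart of Theorem~\ref{th:convexgeometry} for a single added star. Within the paper, the converse can also be read off from Theorem~\ref{main} applied to $q\in\mathcal{D}(G')$, for which $v(q)=s$.
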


Unfortunately, unlike the odd case, not all $n$-convex geometries with of order $2k$ can be constructed applying the previous result to a suitable $n$-convex geometry of order $2k-1$. For example, we know that $P_4$ and $C_4$
are the only  $n$-convex geometries of order $4$. But there is just one  $n$-convex geometry of order $3$, that is, $P_3$ and thus just one quasi-star-augmented graph of order $4$, that is, $C_4.$ This means that a further arrangement must be discovered in order to reach also $P_4.$  

The proof of Theorem~\ref{th:quasi-stars} is technical and we omit it, for the sake of brevity. 

\section*{Conclusions  and future lines of work}
\label{sec:conclusions}

In this paper, we investigated neighbourhood convexity ($n$-convexity) in graphs, a novel convexity space derived from the common closed neighbourhood closure operator introduced in \cite{1020}. Unlike traditional path-based graph convexities, $n$-convexity is almost never hereditary, as shown by the fact that $K_1$ and $2K_1$ are the only hereditary $n$-convex geometries.  We provided structural characterizations for graphs that form $n$-convex geometries, establishing strong bounds on minimum degree, diameter, and the presence of stars. In particular, we proved a parity phenomenon: an $n$-convex geometry admits a star if and only if its vertex count is odd. Every odd-order $n$-convex geometry can be uniquely constructed as the star-augmentation of an even-order $n$-convex geometry. 
However, it remains a mystery how to construct every even-order $n$-convex geometry. Indeed, we have discovered the method of adding a quasi-star to an odd-order $n$-convex geometry, but we know that it does not suffice to construct every even-order $n$-convex geometry. A line of future research is to determine the whole family of constructions capable to provide the full list of even-order $n$-convex geometries.
By exploring the connection with the neighbourhood preorder $P(G)$, we have established that $n$-convex sets are always upsets of $P(G)$, and that quasi-stars are the maximal elements of $P(G)$ in geometries without stars. We also know that the extreme vertices of a convex set $K$ are minimal elements  of the poset induced by $P(G)$ on $K.$ We conjecture that, given a convex geometry $(G,n)$, any upset in $P(G)$ is an $n$-convex set, having checked it computationally for graphs up to 9 vertices. Moreover, we conjecture that in an $n$-convex geometry, the extreme vertices of any convex set $K$ are exactly the minimal elements  of the poset induced by $P(G)$ on $K.$ 

We finally note that we left aside some other questions as studying the algorithmic aspects of $n$-convexity, such as efficiently computing the $n$-convex hull, extreme sets, and checking whether a given graph forms an $n$-convex geometry; or the study of standard convexity parameters for $n$-convexity, including the hull number, interval number, geodetic number, and Radon-type numbers.\section*{Conflict of interest statement} 

The authors have no conflict of interest to declare.

\section*{Acknowledgements} 
 Daniela Bubboloni is supported by INdAM-GNSAGA (Italy) and by the national project PRIN 2022- 2022PSTWLB - Group Theory and Applications - CUP B53D23009410006. 
José Cáceres is supported by CDTIME and Junta de Andalucía under Grant FQM-425.
Both authors thank the people of the Séminaires du Pôle 2: Optimisation combinatoire, algorithmique, of the Lamsade research center (Paris, France), where this research has been presented in June 2026, for their comments on a preliminary version of the paper. Finally, both authors thank Francesco Mori for the kind permission to use some of the beautiful results contained in his Master's Thesis in Mathematics \cite{Mori}.


\begin{thebibliography}{99}

\bibitem{Arm} D. Armstrong, \emph{The Sorting Order on a Coxeter Group}, Journal of Combinatorial Theory, Series A 116 (2009), no. 8, pp. 1285--1305. DOI: 10.1016/j.jcta.2009.03.009.
\bibitem{BackmanDanner2024} S. Backman, R. Danner, \emph{
Convex Geometry of Building Sets},
arXiv:2403.05514, 2024.

\bibitem{bp05} D. Bertsimas, I. Popescu, \emph{Optimal inequalities in probability theory: a convex optimization approach}, SIAM Journal on Optimization, 15(3) (2005), pp. 780--804.

\bibitem{be00} J. M. Bilbao, P. H. Edelman, \emph{The Shapley value on convex geometries}, Discrete Applied Mathematics 103 (2000), pp. 33--40.

\bibitem{bst93}F. Boesch, C. Suffel, R. Tindell, \emph{The neighborhood inclusion structure of a graph}, Mathematical and  Computer Modelling, 17(11) (1993), pp. 75--28.

\bibitem{1020} D. Bubboloni, N. Pinzauti,  \emph{Critical classes of power graphs and reconstruction of directed power graphs}, Journal of Group Theory 28 (2025), pp. 713--739. https://doi.org/10.1515/jgth-2023-018.

\bibitem{bp25a} D. Bubboloni, N. Pinzauti, \emph{Critical groups and partitions of finite groups}, Mediterranean Journal of Mathematics 22(131) (2025). https://doi.org/10.1007/s00009-025-02865-8

\bibitem{co09} J. C\'{a}ceres, O. Oellermann, \emph{On $3$-Steiner simplicial orderings}, Discrete Mathematics 309 (2009), pp. 5828--5833.

\bibitem{cck24} J. Chalopin, V. Chepoi, K. Knauer, \emph{Geometry of convex geometries}, arXiv:2405.12662v2.

%

\bibitem{dp02} B. A. Davey, H. A. Priestley, Introduction to Lattices and Order, 2nd edition Cambridge University Press (2002).

\bibitem{dgkps09} M. C. Dourado, J. G. Gimbel, J. Kratochvíl, F. Protti, J. L. Szwarcfiter, \emph{On the computation of the hull number of a graph}, Discrete Math. 309 (2009), pp. 5668--5674.

\bibitem{dgpst25} M. C. Dourado, M. Gutierrez, F. Protti, R. Sampaio, S. Tondato, \emph{Characterizations of graph classes via convex geometries: A survey}, Discrete Applied Mathematics 360 (2025), pp. 246--257.

\bibitem{dnb99} F. F. Dragan, F. Nicolai, A. Brandstädt, \emph{Convexity and HHD-free graphs}, SIAM Journal on Discrete Mathematics 12 (1999), pp. 119--135.

\bibitem{d87} P. Duchet, Convexity in combinatorial structures, Proceedings of the 14th Winter School on Abstract Analysis, Publisher: Circolo Matematico di Palermo (Palermo), No.14 (1987), pp. 261--293.
\bibitem{Jamison} P. H. Edelman, R. E. Jamison, \emph{The theory of convex geometries}, Geometriae Dedicata 19 (1985),  pp. 247–270. https://doi.org/10.1007/BF00149365 
\bibitem{fj86} M. Farber, R. E. Jamison, \emph{Convexity in graphs and hypergraphs}, SIAM Journal on Algebraic Discrete Methods 7 (1986), pp. 433--444.

\bibitem{Foldes} S. Földes, P. L. Hammer,
\emph{The Dilworth number of a graph},
Annals of  Discrete Mathematics 2 (1978), 211--219.

\bibitem{KnauerTrotter2024} K. Knauer, W. T. Trotter,\emph{ 
Concepts of Dimension for Convex Geometries}, 
SIAM Journal on Discrete Mathematics 38 (2024), no. 2, pp. 1566--1585.
DOI: 10.1137/23M1559853
\bibitem{Li} L. Li, J. Wang, M. Brunetti,
\emph{Seidel matrices, Dilworth number and an eigenvalue-free interval for cographs},
Linear Algebra and its Applications 698 (2024), 56--72.
https://doi.org/10.1016/j.laa.2024.05.022.
\bibitem{ly16} D. G. Luenberger and Y. Ye, Linear and Nonlinear Programming, Springer International Publishing AG Switzerland,  2016.
\bibitem{Ma} Y. Ma, \emph{ 
Enumeration of Hopf Monoids and Supersolvable Convex Geometries},
arXiv:2506.00380, 2025.
\bibitem{Mori} F. Mori, Convexity in graphs, Master's Thesis in Mathematics (thesis supervisor  D. Bubboloni), DIMAI, University of Florence (Italy), 2024.
\bibitem{m16} K. Murota, \emph{Discrete convex analysis: A tool for economics and game theory}, The Journal of Mechanism and Institution Design, Society for the Promotion of Mechanism and Institution Design, University of York, vol. 1(1) (2016), pp. 151--273.
\bibitem{p13} I. M. Pelayo, Geodesic Convexity in Graphs, Springer-Verlag, 2013.
\bibitem{Peled}U. N. Peled, M. K. Srinivasan, \emph{Vicinal orders of trees}, Discrete Applied Mathematics 29 (1990), 211--219.
\bibitem{ps85} F. P. Preparata, M. I. Shamos, Computational Geometry: An Introduction, Springer-Verlag, New York, NY, 1985. 

\bibitem{vv93} M. Van de Vel, Theory of Convex Structures, North-Holland, Amsterdam, MA, 1993.

\bibitem{w99} D. B. West, Introduction to Graph Theory, Prentice-Hall, New Jersey, 1999.

\bibitem{Ya} J-H. Yan, J-J. Chen, G. J. Chang, \emph{Quasi-threshold graphs}, Discrete Applied Mathematics 69 (3) (1996), 247--255.
\end{thebibliography}
\end{document}